\documentclass[pdflatex,sn-mathphys-num]{sn-jnl}

\usepackage{amsmath,amssymb,amsfonts}%
\usepackage{amsthm}%
\usepackage{mathrsfs}%
\usepackage{mathtools}%
\usepackage[title]{appendix}%
\usepackage{xcolor}%
\usepackage{textcomp}%
\usepackage{manyfoot}%
\usepackage{booktabs}%
\usepackage{tikz}%
\usetikzlibrary{arrows.meta}
\usepackage{tikz-cd}

\theoremstyle{thmstyleone}%
\newtheorem{theorem}{Theorem}[section]%
\newtheorem{proposition}[theorem]{Proposition}%
\newtheorem{lemma}[theorem]{Lemma}%
\newtheorem{corollary}[theorem]{Corollary}%
\newtheorem{fact}[theorem]{Fact}%
\newtheorem*{mainthm}{Main Theorem}%

\theoremstyle{thmstylethree}%
\newtheorem{definition}[theorem]{Definition}%
\newtheorem{convention}[theorem]{Convention}%
\newtheorem{remark}[theorem]{Remark}%

\numberwithin{equation}{section}%
\newcommand{\CC}{\mathbb C}
\newcommand{\Z}{\mathbb Z}
\newcommand{\one}{\mathbf 1}
\newcommand{\id}{\mathrm{id}}
\newcommand{\Id}{\mathrm{Id}}
\newcommand{\cA}{\mathcal A}
\newcommand{\cB}{\mathcal B}
\newcommand{\cC}{\mathcal C}
\newcommand{\cD}{\mathcal D}
\newcommand{\cE}{\mathcal E}
\newcommand{\cF}{\mathcal F}

\newcommand{\cM}{\mathcal M}

\newcommand{\cV}{\mathcal V}
\newcommand{\sVec}{\mathbf{sVect}}
\newcommand{\Vect}{\mathbf{Vect}}
\DeclareMathOperator{\Hom}{Hom}
\DeclareMathOperator{\End}{End}
\DeclareMathOperator{\Aut}{Aut}
\DeclareMathOperator{\Inv}{Inv}
\DeclareMathOperator{\Irr}{Irr}
\DeclareMathOperator{\Fun}{Fun}

\DeclareMathOperator{\FPdim}{FPdim}

\newcommand{\AutLR}{\operatorname{Aut}_{\mathrm{LR}}}
\newcommand{\AutLRu}{\operatorname{Aut}^{\mathrm u}_{\mathrm{LR}}}

\newcommand{\mcl}[1]{[\![#1]\!]}
\newcommand{\Zt}{\mathcal Z_2}

\begin{document}

\title[Uniqueness of symmetry lifts]{Lifts of symmetries to minimal nondegenerate extensions are unique up to fermion attachment}

\author*[1]{\fnm{Ran} \sur{Luo}}\email{ranluo@pku.edu.cn}

\affil*[1]{\orgdiv{School of Physics}, \orgname{Peking University}, \orgaddress{\city{Beijing}, \country{China}}}

\author*[2]{\fnm{Jiahua} \sur{Tian}}\email{jtian1905@gmail.com}

\affil*[2]{\orgdiv{School of Physics}, \orgname{East China Normal University}, \orgaddress{\city{Shanghai}, \country{China}}}

\abstract{\unboldmath
Gauging fermion parity in a $(2+1)$-dimensional fermionic topological order requires lifting its symmetries to the resulting bosonic theory. We give a direct tensor-categorical proof that, for a slightly degenerate braided fusion category $\cC$ and a fixed minimal nondegenerate extension $\cE$, the restriction homomorphism $r\colon\AutLR(\cE)\to\AutLR(\cC)$ on classes of complex-linear locality-respecting (LR) braided autoequivalences is surjective with central kernel $\Z/2$, generated by fermion attachment. Thus each symmetry class has exactly two LR lift classes, related by attaching the fermion to every vortex. We apply graded Morita theory to reduce the kernel to two module types on the vortex sector, corresponding to tensoring with the vacuum or the fermion. We then prove relative rigidity to show that each type determines a unique LR class, which are distinguished by fermionic self-braiding even when their actions on simple-object labels coincide. We prove surjectivity by extending graph algebras in $\mathcal C^{\mathrm{rev}}\boxtimes\mathcal C$ to Lagrangian algebras in $\mathcal E^{\mathrm{rev}}\boxtimes\mathcal E$. Our results require neither unitarity nor a ribbon structure. We illustrate the lifting ambiguity in the Laughlin state, the $SO(3)_3$ theory and the Ising topological order.
}

\keywords{Braided fusion categories, Fermionic topological order, Fermion-parity gauging, Minimal nondegenerate extensions, Braided autoequivalences}

\pacs[MSC Classification]{18M20, 18M15, 81T45, 81V74}

\maketitle
\clearpage
\setcounter{tocdepth}{2}
\tableofcontents
\clearpage

\section{Introduction}\label{sec:intro}

A $(2+1)$-dimensional fermionic topological order is described, up to invertible phases, by a \emph{super-modular category}, which is a unitary braided fusion category $\cC$ whose M\"uger center is $\sVec=\{\one,\psi\}$, generated by the physical local fermion $\psi$. Gauging fermion parity produces a \emph{minimal modular extension} $\cC\subset\cE$, which is a modular category with $\FPdim(\cE)=2\FPdim(\cC)$ in which $\psi$ is no longer transparent. The new simple objects braiding nontrivially with $\psi$ are the \emph{parity vortices}. More generally, a braided fusion category $\cC$ is \emph{slightly degenerate} if its M\"uger center is braided equivalent to $\sVec$~\cite{Davydov:2011dqx}, and a \emph{minimal nondegenerate extension} of $\cC$ is a nondegenerate braided fusion category $\cE$ containing $\cC$ as a braided fusion subcategory, with $\FPdim(\cE)=2\FPdim(\cC)$. Johnson-Freyd and Reutter proved that every slightly degenerate braided fusion category has a minimal nondegenerate extension \cite[Main Theorem]{Johnson-Freyd:2021chu}. For super-modular categories, and more generally for pseudo-unitary slightly degenerate ones, this extension can be taken pseudo-unitary, hence modular with its positive ribbon structure \cite[\S1.3]{Johnson-Freyd:2021chu}.

A global symmetry of the fermionic phase acts on anyons by a braided autoequivalence of $\cC$~\cite[\S III.B, Eq.~(80)]{Barkeshli:2014cna},~\cite[Eq.~(24) and \S IV.A]{Bulmash:2021hmb}, which is constrained by locality. Namely, the symmetry must act trivially on the fusion space $\Hom(\psi\otimes\psi,\one)$ of the local fermion~\cite[\S IV.A, Eq.~(59)]{Bulmash:2021hmb}, and two actions are identified only through natural isomorphisms acting trivially on $\psi$~\cite[\S IV.B]{Bulmash:2021hmb}. These two conditions define the group $\AutLR(\cC)$ of \emph{locality-respecting} (LR) autoequivalences \cite[\S IV.C, Eqs.~(65) and~(77)]{Bulmash:2021ryq}.

Gauging fermion parity $\Z_2^f\subset G_f$ while preserving the remaining symmetry $G_b=G_f/\Z_2^f$~\cite[\S IV.A]{Bulmash:2021hmb} produces the bosonic theory $\cE$, on which $G_b$ must still act. Each symmetry of $\cC$ must therefore be lifted to a symmetry of $\cE$ that restricts to it, and such lifts also enter the computation of anomalies~\cite{Bulmash:2021ryq}. The lifts of a class are governed by the kernel of the restriction homomorphism
\begin{equation}\label{eq:intro-r}
	r\colon\AutLR(\cE)\longrightarrow\AutLR(\cC).
\end{equation} 
The kernel always contains a subgroup $\Z/2$ generated by the class of \emph{fermion attachment} $\alpha$, which is the identity on $\cC$ and sends a parity vortex $X$ to $X\otimes\psi$. It is called the vortex permuting symmetry in~\cite[\S III.A, Eqs.~(111)--(115)]{Aasen:2021vva}; see also~\cite[\S VI.B.2, Eqs.~(104)--(108)]{Bulmash:2021ryq}. It is further conjectured that nothing else is in the kernel~\cite[Conj.~VI.1]{Bulmash:2021ryq}, namely:
\begin{equation}\label{eq:BBconj}
\ker r\cong\Z/2\qquad\text{for all super-modular $\cC$ and all minimal modular extensions $\cE$.}
\end{equation}

Aasen, Bonderson, and Knapp proved~\eqref{eq:BBconj} in the skeletal formalism when $\cE$ contains a \emph{superAbelian} vortex, that is, an Abelian vortex or a vortex $X$ of dimension $\sqrt2$ with $X\otimes\psi\cong X$~\cite[App.~F.1--F.2]{Aasen:2021vva}, and~\eqref{eq:BBconj} also enters later work on anomalies of fermionic topological phases~\cite[App.~C, Conj.~C.3]{Debray:2023iwf},~\cite[App.~D]{Zhou:2024fkv}. So far, \eqref{eq:BBconj} has been proved directly only in the superAbelian case~\cite[App.~F.3]{Aasen:2021vva},~\cite[\S VI.B.5]{Bulmash:2021ryq}, although $|\ker r|=2$ can also be deduced from the higher-categorical results of~\cite[\S3 and Rem.~3.1]{Johnson-Freyd:2026galois},~\cite{Johnson-Freyd:2021chu}.

Our contribution is a direct tensor-categorical proof of~\eqref{eq:BBconj}, in the generality of slightly degenerate braided fusion categories and their minimal nondegenerate extensions, together with surjectivity of~$r$, so that $\AutLR(\cE)$ is a central extension of $\AutLR(\cC)$ by $\Z/2$. In the proof, we construct explicit LR representatives and control the full tensor structure. We use two ingredients, the classification of the module type (Proposition~\ref{prop:module-type}) and relative rigidity (Lemma~\ref{lem:rigidity}), which we outline in Section~\ref{subsec:intro-idea}. With them, we treat extensions with and without superAbelian vortices uniformly. Since relative rigidity holds for arbitrary faithfully $\Z/2$-graded braided fusion categories, we can use it to exclude hidden tensor structures even when some fusion multiplicities exceed one. We use graph algebras to prove surjectivity by enlarging the graph algebra of a symmetry of $\cC$ to a Lagrangian algebra in $\cE^{\mathrm{rev}}\boxtimes\cE$, which is the graph of a lift to $\cE$ (Proposition~\ref{prop:surjectivity}). 

Physically, every linear LR symmetry class has exactly two lifts to the parity vortices. Given one lift, the other attaches $\psi$ throughout the vortex sector. The fermionic sign $c_{\psi,\psi}=-\id$ distinguishes the two LR classes even when their permutations of simple objects coincide. We show that this sign appears in the tensor structure $J^\alpha_{X,\psi}\colon\alpha(X)\otimes\alpha(\psi)\to\alpha(X\otimes\psi)$ of fermion attachment, which equals $-\id$ for every vortex $X$ because the fermion attached to $X$ is exchanged with $\psi$. Consequently every monoidal natural isomorphism $\eta\colon\alpha\Rightarrow\Id_\cE$ satisfies $\eta_\psi=-\id_\psi$, so $\alpha$ is not identified with $\Id_\cE$ by a natural isomorphism acting trivially on $\psi$ (Proposition~\ref{prop:alpha-nontrivial}).

On the mathematical side, in Proposition~\ref{prop:G1} we identify the LR kernel with the automorphism group of $\cE$ in the groupoid of marked minimal nondegenerate extensions of $\cC$, and we use the same argument to compute its group of 2-automorphisms. These explicit calculations realize the homotopy-group description in \cite[\S3]{Johnson-Freyd:2026galois}. They fit the theory of braided graded extensions and braided Picard 2-groups in \cite[\S8]{Davydov-Nikshych:2021}. However, in the kernel proof we use only graded Morita theory and ordinary natural transformations.

\subsection{Main results}\label{subsec:intro-results}

We work with slightly degenerate braided fusion categories and their minimal nondegenerate extensions, examples of which are super-modular categories and their minimal modular extensions. We prove the following statement.

\begin{mainthm}
Let $\cC$ be a slightly degenerate braided fusion category with fermion $\psi$, and fix a minimal nondegenerate extension $\cC\subset\cE$. The restriction homomorphism $r\colon\AutLR(\cE)\to\AutLR(\cC)$ of $\CC$-linear locality-respecting braided autoequivalences fits into the central short exact sequence
\begin{equation}\label{eq:central-exact}
	1\longrightarrow\Z/2\xrightarrow{\;1\mapsto[\alpha_\psi]\;}\AutLR(\cE)\xrightarrow{\;r\;}\AutLR(\cC)\longrightarrow1,
\end{equation}
where $[\alpha_\psi]$ is the fermion-attachment class. In particular, $r$ is surjective, $\ker r=\{[\Id_\cE,\id_\psi],[\alpha_\psi]\}$, and every class in $\AutLR(\cC)$ has exactly two lifts, which differ by $[\alpha_\psi]$. If $\cE$ is unitary, the same central short exact sequence exists with $\AutLR$ replaced by $\AutLRu$, the groups of unitary locality-respecting autoequivalences modulo unitary equivalence.
\end{mainthm}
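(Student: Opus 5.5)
The proof splits into four parts: the restriction map $r$ is well defined, $[\alpha_\psi]$ is a nontrivial central element of $\ker r$, $\ker r$ has exactly two elements, and $r$ is surjective.

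\textbf{Well-definedness and the fermion-attachment class.} An LR autoequivalence of $\cE$ preserves the M\"uger center of $\cC$, which is generated by $\psi$. Such an autoequivalence also preserves $\cC$ itself, since $\cC$ is the centralizer of $\sVec$ in $\cE$. Its restriction to $\cC$ is therefore an LR autoequivalence of $\cC$, and restriction respects the LR equivalence relation on both sides, so $r$ is a group homomorphism.

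Next I would build $\alpha_\psi$ explicitly. It is the identity on the trivial component $\cE_0=\cC$ and $X\mapsto X\otimes\psi$ on the vortex component $\cE_1$. Its tensor structure comes from the braiding $c_{\psi,Y}$, so that $J_{X,\psi}=-\id$ for vortices $X$. Computing $\eta_\psi$ for any monoidal isomorphism $\eta\colon\alpha_\psi\Rightarrow\Id_\cE$ forces $\eta_\psi=-\id_\psi$. This is Proposition~\ref{prop:alpha-nontrivial}, and it shows $[\alpha_\psi]\neq 1$ in $\AutLR(\cE)$, so $1\mapsto[\alpha_\psi]$ is injective.

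For centrality, let $F$ be any LR autoequivalence. It preserves the grading, fixes $\psi$ up to the LR-trivial isomorphism, and respects $J^F_{X,\psi}$. Hence $F\alpha_\psi F^{-1}\cong\alpha_\psi$ by a natural isomorphism that is trivial on $\psi$.

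\textbf{The kernel.} Take $F$ with $r[F]=1$. After composing with an LR isomorphism, I may assume $F|_\cC=\Id_\cC$ strictly, with trivial tensor structure on $\cC$. Then $F$ restricts to an equivalence of $\cC$-bimodule categories $\cE_1\to\cE_1$, twisted compatibly with the braiding.

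By Proposition~\ref{prop:module-type}, graded Morita theory says such an invertible bimodule autoequivalence of the vortex sector is, as a module functor, either $-\otimes\one$ or $-\otimes\psi$. This uses that invertible objects of $\mathcal Z(\cC)$ centralizing appropriately are $\one,\psi$, equivalently that $\mathrm{Pic}$-type data reduce to $\sVec$.

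Relative rigidity (Lemma~\ref{lem:rigidity}) then shows that a braided autoequivalence of the faithfully $\Z/2$-graded category $\cE$ that is the identity on $\cE_0$ is determined up to LR isomorphism by its module type on $\cE_1$. Any hidden tensor structure, including those coming from fusion multiplicities greater than one, is absorbed by a natural isomorphism that is trivial on $\cC$. So the two types give exactly $[\Id_\cE]$ and $[\alpha_\psi]$, and $\ker r=\Z/2$.

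I expect this step to be the main obstacle. One must check that the rigidity isomorphism can be chosen trivial on $\psi$, not merely monoidal, so that the two classes really stay distinct and do not collapse.

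\textbf{Surjectivity.} Given $\phi\in\AutLR(\cC)$, form its graph algebra $A_\phi\in\cC^{\mathrm{rev}}\boxtimes\cC$, of dimension $\FPdim\cC$. The LR condition makes $\psi\boxtimes\psi$ act compatibly on it, so $\psi\boxtimes\psi$ is a local invertible object over $A_\phi$. Adjoining it gives the algebra $B=A_\phi\oplus A_\phi(\psi\boxtimes\psi)$.

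I would then extend further using the vortex sector of $\cE^{\mathrm{rev}}\boxtimes\cE$, via the local modules of $B$: the category of local $B$-modules is pointed, of dimension 4, and contains a Lagrangian algebra. This yields a Lagrangian $L\supset A_\phi$ in $\cE^{\mathrm{rev}}\boxtimes\cE$ of dimension $4\FPdim(\cC)^2=\FPdim(\cE)^2$ whose projections are trivial. By Davydov--Nikshych--Ostrik, $L$ is the graph of a braided autoequivalence $\tilde\phi$ of $\cE$, and $\tilde\phi$ extends $\phi$ (Proposition~\ref{prop:surjectivity}). Since $\tilde\phi$ acts on $\psi$ as $\phi$ does, it is LR.

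\textbf{Unitary version.} The same argument applies with unitary functors and unitary natural isomorphisms. Unitarization of the monoidal isomorphisms is available because the relevant groups of automorphisms are compact.
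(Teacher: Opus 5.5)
\textbf{Surjectivity.} This step does not work as written. The object $\psi\boxtimes\psi$ is already the $X=\psi$ summand of $A_\phi=\bigoplus_{X\in\Irr(\cC)}X^*\boxtimes\phi(X)$, because $\phi(\psi)\cong\psi\cong\psi^*$. Hence $(\psi\boxtimes\psi)\otimes A_\phi\cong A_\phi$ as $A_\phi$-modules, and ``adjoining'' it gives $A_\phi\oplus A_\phi$, which is not connected.

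Your dimensions are also inconsistent. In $\cD=\cE^{\mathrm{rev}}\boxtimes\cE$, local modules over a connected \'etale algebra of dimension $2\FPdim(\cC)$ form a category of dimension $1$, not $4$. A Lagrangian algebra in $\cD$ has dimension $\FPdim(\cE)=2\FPdim(\cC)$, not $\FPdim(\cE)^2$. The four-dimensional category is $\mathcal K=\cD_{A_\phi}^0$ for $A_\phi$ itself. There $\psi\boxtimes\one$ and $\one\boxtimes\psi$ induce isomorphic fermions, which cannot be condensed.

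The missing idea is how to produce a boson in $\mathcal K$. Nondegeneracy, dimension $4$ and a fermion only make $\mathcal K$ a minimal nondegenerate extension of $\sVec$. All such extensions except the toric code have no nontrivial boson and hence no Lagrangian algebra; examples are the Ising-type, three-fermion and $U(1)_4$-type ones. So pointedness does not follow, and even pointedness would not suffice. The paper uses two facts to identify $\mathcal K$ with the toric code and choose $b\in\{e,m\}$: condensation preserves Witt classes, so $[\mathcal K]=[\cD]=0$, and the Witt-class map on extensions of $\sVec$ is injective.

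Even then, $L=A_\phi\oplus U(b)$ is a graph only because $U(b)$ lies in degree $(1,1)$ of the $(\Z/2)^2$-grading of $\cD$. This follows from its monodromies $-1$ with $\psi\boxtimes\one$ and $\one\boxtimes\psi$, since the toric-code fermion braids by $-1$ with $e$ and $m$. This degree gives $L\cap(\cE^{\mathrm{rev}}\boxtimes\one)=\one\boxtimes\one=L\cap(\one\boxtimes\cE)$ and $L_{0,0}=A_\phi$. Uniqueness of graph algebras then gives $H|_\cC\cong\phi$. Your phrases ``projections are trivial'' and ``$\tilde\phi$ extends $\phi$'' assert exactly these points without argument. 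Also, the LR condition plays no role in building the lift; the marking is transported afterwards along $H|_\cC\cong\phi$.

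\textbf{Other points.} The step you flag as the main obstacle is automatic, since the isomorphism of Lemma~\ref{lem:rigidity} is the identity on all of $\cC$. The real subtlety in the kernel is module type $R_\psi$. Lemma~\ref{lem:rigidity} as stated covers only type $R_\one$. The paper handles $R_\psi$ by composing with $\alpha$ and using an isomorphism $\alpha^2\cong\Id_\cE$ that is trivial on $\cC$ (Corollaries~\ref{cor:alpha-square} and~\ref{cor:alpha-rigid}).

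Your stronger claim, that a normalized functor is determined by its module type, is true, but it needs its own argument. Transport one functor onto the other; the four-vortex argument then applies to the ratio of their odd--odd tensorators. Once proved, it gives $[\alpha_\psi]^2=1$ for free from $|\ker r|=2$.

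Centrality needs no argument about $J^F_{X,\psi}$, since a normal subgroup of order two is central.

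For the unitary statement, rerunning the proof with unitary data is not automatic. The Morita isomorphisms, the transported structures and the lift $H$ need not be unitary. The paper instead shows in Lemma~\ref{lem:unitary}, via Reutter's results, that $\AutLRu\to\AutLR$ is a bijection compatible with $r$.
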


Here $\AutLR(\cE)$ is the group of classes $[F,\varphi]$ of pairs consisting of a braided autoequivalence $F$ of $\cE$ and an isomorphism $\varphi\colon F(\psi)\to\psi$ satisfying the local-vertex condition, up to monoidal natural isomorphisms compatible with $\varphi$ (Definition~\ref{def:LR}). The class $[\alpha_\psi]$ is that of $(\alpha,\id_\psi)$, where $\alpha$ is fermion attachment (Definition~\ref{def:alpha}). It is the identity tensor functor on $\cC$ and sends a vortex $X$ to $X\otimes\psi$. For vortices $X,Y$, the tensor structure
\begin{equation*}
	J^\alpha_{X,Y}\colon\alpha(X)\otimes\alpha(Y)=(X\otimes\psi)\otimes(Y\otimes\psi)\longrightarrow X\otimes Y=\alpha(X\otimes Y)
\end{equation*}
of $\alpha$ braids the fermion attached to $X$ past $Y$ by $c_{\psi,Y}$ and then applies a fixed isomorphism $e\colon\psi\otimes\psi\to\one$ to the two attached fermions.

\subsection{Idea of the proof and organization of the paper}\label{subsec:intro-idea}
Let $[F,\varphi]\in\ker r$, and let $\cE=\cC\oplus\cV$ be the grading by monodromy with $\psi$, whose odd part $\cV$ is the vortex sector (Lemma~\ref{lem:grading}). By the definition of $r$, there is a monoidal natural isomorphism $\eta\colon F|_\cC\Rightarrow\Id_\cC$ with $\eta_\psi=\varphi$. We extend $\eta$ by the identity on $F|_\cV$ to a natural isomorphism $\Theta\colon F\Rightarrow\Phi$ and transport the tensor structure along $\Theta$ (Lemma~\ref{lem:normalization}). This is a change of representative within the same LR class and can be understood physically as making a ``gauge choice'' as follows
\begin{equation*}
\begin{tikzpicture}[baseline=(current bounding box.center)]
\node (source-left) at (0,0) {$\cE$};
\node (target-left) at (0,-1.8) {$\cE$};
\node (source-right) at (5.2,0) {$\cE$};
\node (target-right) at (5.2,-1.8) {$\cE$};
\draw[->] (source-left) -- node[left=5pt] {$(F,\varphi)$} (target-left);
\draw[->] (source-right) -- node[right=5pt] {$(\Phi,\id_\psi)$} (target-right);
\draw[double,double equal sign distance,-{Implies[length=6pt,width=6pt]}]
  (0.35,-0.9) -- node[above=5pt] {gauge choice}
  node[below=5pt] {$\Theta=\eta\oplus\id_{F|_\cV}$} (4.85,-0.9);
\end{tikzpicture}
\end{equation*}
Here $\Phi=\Id_\cC\oplus F|_\cV$ as a linear functor, equipped with the transported tensor structure $J^\Phi$, and $\Phi|_\cC=\Id_\cC$ as a tensor functor. Replacing $(F,\varphi)$ by this representative, we may therefore assume that $F$ is literally the identity tensor functor on $\cC$ and that $\varphi=\id_\psi$. The question is then how much freedom $F$ retains on $\cV$. The answer is a single binary choice, and we prove this using two facts.

The first fact is that \emph{$F$ acts on the vortex sector as $-\otimes\one$ or as $-\otimes\psi$}. Since $F$ is the identity on $\cC$, its restriction to $\cV$, together with the mixed tensor structure $J^F_{b,X}$ for $b\in\cC$ and $X\in\cV$, is a left $\cC$-module autoequivalence of $\cV$. By graded Morita theory~\cite{Etingof:2009yvg}, $\cV$ is an invertible $\cC$-bimodule category, so every such functor is isomorphic to a right multiplication $R_a=-\otimes a$ by a single object $a\in\cC$ (Corollary~\ref{cor:morita}). This is where the modular data are too coarse, namely they may split the vortices into blocks that seem to admit independent choices, but the module structure couples all vortices through their fusion with $\cC$, and there is only one global choice $a$. Since $F$ is braided, $a$ must centralize $\cC$, so $a$ lies in the M\"uger center $\Zt(\cC)=\langle\psi\rangle$, and since $F$ is fully faithful, $a\cong\one$ or $a\cong\psi$ (Proposition~\ref{prop:module-type}). The value $a=\psi$ is realized by fermion attachment $\alpha$ (Proposition~\ref{prop:alpha}).

The second fact is \emph{relative rigidity}, namely the kernel element is determined by the value of $a$. This requires control of the tensor structure and of the allowed natural isomorphisms: fixing every simple object up to isomorphism does not imply that an LR class is trivial, as fermion attachment in the Ising theory illustrates (Proposition~\ref{prop:alpha-nontrivial}). For a kernel element with $a=\one$, after a further transport to a tensor structure $J'$, the only data not controlled by the module functor are the vortex--vortex components $J'_{X,Y}$, which act on the fusion spaces $\Hom(X\otimes Y,b)$ with $b\in\cC$. These components satisfy
\begin{equation*}
	\id_{X\otimes Y}\otimes J'_{Z,W}=J'_{X,Y}\otimes\id_{Z\otimes W}\qquad\text{for all vortices }X,Y,Z,W,
\end{equation*}
which follows from three instances of the monoidal axiom~\eqref{eq:monoidal-axiom} for $J'$. So $J'_{X,Y}=\lambda\,\id_{X\otimes Y}$ for a single scalar $\lambda$, and rescaling all vortices by $\sqrt\lambda$ removes it (Lemma~\ref{lem:rigidity}). Hence a kernel element with $a=\one$ is trivial, by an isomorphism that is the identity on $\cC$ and in particular at $\psi$. By composing with $\alpha$, we reduce the case $a=\psi$ to the case $a=\one$, so $\ker r\subseteq\{1,[\alpha_\psi]\}$.

Conversely, it is known for super-modular categories in the skeletal formulation that fermion attachment generates a subgroup $\{1,[\alpha_\psi]\}$ of order two of $\ker r$, namely $\{1,[\alpha_\psi]\}\subseteq\ker r$~\cite{Aasen:2021vva,Bulmash:2021ryq}. We include a short tensor-categorical proof for two reasons. First, it applies to every slightly degenerate $\cC$, unitary or not, and second, in the proof of $\ker r\subseteq\{1,[\alpha_\psi]\}$ we use the specific representative $(\alpha,\id_\psi)$, with $\alpha|_\cC=\Id_\cC$ and $\alpha|_\cV=R_\psi$ as module functors. The class $[\alpha_\psi]$ is nontrivial. Indeed, since the fermionic self-braiding is $c_{\psi,\psi}=-\id$, every monoidal natural isomorphism $\eta\colon\alpha\Rightarrow\Id_\cE$ has $\eta_\psi=-\id_\psi$, whereas a morphism of pairs $(\alpha,\id_\psi)\to(\Id_\cE,\id_\psi)$ requires $\eta_\psi=\id_\psi$ (Proposition~\ref{prop:alpha-nontrivial}). This sign is essential. In the Ising extension of $\sVec$, $\alpha$ fixes every simple object up to isomorphism, but $(\alpha,\id_\psi)\not\cong(\Id_\cE,\id_\psi)$ (Proposition~\ref{prop:alpha-nontrivial}). Applying relative rigidity to $\alpha^2$, we get $[\alpha_\psi]^2=1$ (Corollary~\ref{cor:alpha-square}). Combining this with $\ker r\subseteq\{1,[\alpha_\psi]\}$, we obtain $\ker r=\{1,[\alpha_\psi]\}\cong\Z/2$.

We prove surjectivity separately in Proposition~\ref{prop:surjectivity}. For a braided autoequivalence $F$ of $\cC$, we condense its graph algebra in $\cE^{\mathrm{rev}}\boxtimes\cE$. The resulting local-module category has dimension four, contains a fermion, and is Witt-trivial, so it is the toric-code category. Condensing one of its bosons enlarges the graph algebra to a Lagrangian algebra. Using the parity grading and the classification of connected \'etale algebras in a product, we then identify this algebra with the graph of a braided autoequivalence of $\cE$ restricting to $F$. We use neither unitarity nor a ribbon structure in this argument.

The work is organized as follows. In Section~\ref{sec:prelim}, we fix the monoidal conventions and recall the parity grading and graded Morita theory. In Section~\ref{sec:LR}, we define LR autoequivalences and their restriction map, and we compare them with the skeletal formulation of the physics literature. In Sections~\ref{sec:vortex} and~\ref{sec:attachment}, we establish the module-type classification, relative rigidity, and the properties of fermion attachment. In Section~\ref{sec:proof}, we prove surjectivity and complete the proof of the Main Theorem. In Section~\ref{sec:extension}, we identify the kernel with the automorphism group of a marked minimal extension. In Section~\ref{sec:applications}, we interpret the kernel theorem physically in the Laughlin state, the $SO(3)_3$ theory and the Ising theory. In Appendix~\ref{app:square}, we give an explicit trivialization of $\alpha^2$.

\section{Conventions and preliminaries}\label{sec:prelim}

We work over $\CC$ and use the terminology of \cite{EGNO:2015} for fusion categories, tensor functors, module categories and braided categories. We write $\Irr(\cA)$ and $\Inv(\cA)\subset\Irr(\cA)$ for the sets of isomorphism classes of simple and of invertible objects. We adopt the following conventions.

\begin{convention}\label{conv:coherence}
All monoidal categories are understood in the usual weak, not necessarily strict, sense. We write
\begin{equation*}
	a_{X,Y,Z}\colon (X\otimes Y)\otimes Z\longrightarrow X\otimes(Y\otimes Z), \quad l_X\colon\one\otimes X\longrightarrow X, \quad r_X\colon X\otimes\one\longrightarrow X
\end{equation*}
for their associator and left and right unit constraints (unitors). These are part of the data and satisfy the pentagon and triangle axioms. A full monoidal subcategory carries the inherited constraints. We use superscripts to distinguish categories when necessary.

By Mac Lane's coherence theorem~\cite[Thm.~2.9.2 and Rem.~2.9.3]{EGNO:2015}, between two bracketings of a word in objects, possibly with copies of $\one$ inserted, there is a unique isomorphism composed of tensor products of $a$, $l$, $r$, their inverses and identities. We call it the canonical isomorphism. When we omit $\otimes$ and canonical isomorphisms in a diagram, an object stands for a word, in any bracketing. An arrow $usv\to utv$ labelled $f\colon s\to t$, where $u$ and $v$ are possibly empty words, stands for the canonical isomorphism from the given bracketing of $usv$ to one in which $s$ is a single factor, followed by $\id_u\otimes f\otimes\id_v$, followed by the canonical isomorphism to the given bracketing of $utv$. By coherence and naturality of $a$, $l$ and $r$, this morphism does not depend on the bracketing in which $s$ is a single factor. Hence whether such a diagram commutes does not depend on the bracketings chosen for its objects.
\end{convention}

\begin{convention}\label{conv:graded}
If $\cE=\cE_0\oplus\cE_1$ as abelian categories~\cite[\S1.3]{EGNO:2015}, we call the objects of $\cE_0\cup\cE_1$ homogeneous. Linear functors, natural transformations and tensor structures are specified on homogeneous objects and morphisms and extended by additivity. Since every object is a direct sum of homogeneous ones, morphisms between $\cE_0$ and $\cE_1$ vanish, and all functors involved are additive, with $\otimes$ additive in each variable. Naturality, the monoidal axiom and the braiding compatibility need only be checked on homogeneous objects.
\end{convention}

\begin{convention}\label{conv:unitary}
Unitary fusion categories carry a positive $\dagger$-structure, unitary braided fusion categories also have unitary braiding, and unitary tensor functors are $\dagger$-functors with unitary tensor structure. However, unitarity plays no role in the proofs, and it enters only through the statement of the Main Theorem for the groups $\AutLRu$ of unitary classes, which is needed for the comparison with~\cite{Bulmash:2021ryq} and which we deduce from the general case.
\end{convention}

\subsection{Tensor functors and their transport}
\label{subsec:tensor-functors}

We use strong monoidal functors following the conventions of~\cite[\S2.4]{EGNO:2015}. A tensor functor $F\colon\cA\to\cB$ has structure isomorphisms
\begin{equation*}
	J^F_{X,Y}\colon F(X)\otimes F(Y)\longrightarrow F(X\otimes Y), \qquad \iota^F\colon\one_\cB\longrightarrow F(\one_\cA).
\end{equation*}
Writing composition from right to left, $J^F$ and $\iota^F$ satisfy the \emph{monoidal axiom}
\begin{equation}\label{eq:monoidal-axiom}
	F(a^\cA_{X,Y,Z})J^F_{X\otimes Y,Z}(J^F_{X,Y}\otimes\id_{F(Z)}) =J^F_{X,Y\otimes Z}(\id_{F(X)}\otimes J^F_{Y,Z})a^\cB_{F(X),F(Y),F(Z)}
\end{equation}
and the \emph{unit conditions}
\begin{equation}\label{eq:functor-units}
\begin{aligned}
	F(l^\cA_X)J^F_{\one,X}(\iota^F\otimes\id_{F(X)})&=l^\cB_{F(X)},\\
	F(r^\cA_X)J^F_{X,\one}(\id_{F(X)}\otimes\iota^F)&=r^\cB_{F(X)}.
\end{aligned}
\end{equation}
Here $a$, $l$ and $r$ are the associator and unit constraints of Convention~\ref{conv:coherence}, and we omit the superscripts $\cA$ and $\cB$ when they are clear.

Up to monoidal isomorphism, we may choose $F$ to be \emph{unit-normalized}, meaning $F(\one_\cA)=\one_\cB$ and $\iota^F=\id_\one$~\cite[Rem.~2.4.6]{EGNO:2015}, in which case we have
\begin{equation}\label{eq:normalized-unit-tensorators}
	J^F_{\one,X}=F(l_X)^{-1}l_{F(X)}, \qquad J^F_{X,\one}=F(r_X)^{-1}r_{F(X)},
\end{equation}
which are maps $\one\otimes F(X)\to F(\one\otimes X)$ and $F(X)\otimes\one\to F(X\otimes\one)$, and so are identities only when the unit constraints are strict. We use unit-normalized representatives for LR autoequivalences.

For composable tensor functors $G\colon\cA\to\cB$ and $F\colon\cB\to\cD$, the composite $FG=F\circ G$ has structure
\begin{equation}\label{eq:composite}
	J^{FG}_{X,Y}=F(J^G_{X,Y})J^F_{G(X),G(Y)},\qquad \iota^{FG}=F(\iota^G)\iota^F.
\end{equation}
For parallel tensor functors $F,G\colon\cA\to\cB$, a natural transformation $\eta\colon F\Rightarrow G$ is \emph{monoidal} if
\begin{equation}\label{eq:monoidal-nat}
	\eta_{X\otimes Y}J^F_{X,Y}=J^G_{X,Y}(\eta_X\otimes\eta_Y),\qquad \eta_\one\iota^F=\iota^G
\end{equation}
which are the tensor and unit conditions, respectively~\cite[Def.~2.4.8 and Rem.~2.4.9]{EGNO:2015}. For unit-normalized functors the unit condition becomes $\eta_\one=\id_\one$. A braiding $c$ satisfies the hexagon axioms~\cite[\S8.1]{EGNO:2015}, which in the notation of Convention~\ref{conv:coherence} read $c_{X,Y\otimes Z}=(\id_Y\otimes c_{X,Z})(c_{X,Y}\otimes\id_Z)$ and $c_{X\otimes Y,Z}=(c_{X,Z}\otimes\id_Y)(\id_X\otimes c_{Y,Z})$. A tensor functor between braided categories is \emph{braided} if
\begin{equation}\label{eq:braided-functor}
	F(c^\cA_{X,Y})J^F_{X,Y}=J^F_{Y,X}c^\cB_{F(X),F(Y)}
\end{equation}
for all $X,Y$ \cite[Def.~8.1.7]{EGNO:2015}.

\begin{lemma}\label{lem:transport}
Let $(F,J^F,\iota^F)\colon\cA\to\cB$ be a tensor functor, let $G\colon\cA\to\cB$ be a functor, and let $\theta\colon F\Rightarrow G$ be a natural isomorphism. There is a unique tensor structure on $G$ making $\theta$ monoidal, namely
\begin{equation}\label{eq:transport}
\begin{aligned}
	J^G_{X,Y} &=\theta_{X\otimes Y}J^F_{X,Y}
       (\theta_X^{-1}\otimes\theta_Y^{-1}),\\
       \iota^G&=\theta_\one\iota^F.
\end{aligned}
\end{equation}
If the categories are braided and $F$ is braided, then $G$ is braided as well.
\end{lemma}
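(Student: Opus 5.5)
Define $J^G$ and $\iota^G$ by the formulas in~\eqref{eq:transport}. These are forced: if $\theta$ is to be monoidal, the tensor condition in~\eqref{eq:monoidal-nat} reads $\theta_{X\otimes Y}J^F_{X,Y}=J^G_{X,Y}(\theta_X\otimes\theta_Y)$, and since $\theta_X\otimes\theta_Y$ is invertible this determines $J^G_{X,Y}$. Likewise $\theta_\one\iota^F=\iota^G$ determines $\iota^G$. This gives uniqueness. For existence it remains to check that $(G,J^G,\iota^G)$ is a tensor functor. Naturality of $J^G$ in $X$ and $Y$ follows from naturality of $J^F$, $\theta$ and $\theta^{-1}$. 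Each $J^G_{X,Y}$ and $\iota^G$ is an isomorphism because it is a composite of isomorphisms.

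For the monoidal axiom~\eqref{eq:monoidal-axiom}, I substitute the definition into the left side
\[
G(a_{X,Y,Z})J^G_{X\otimes Y,Z}(J^G_{X,Y}\otimes\id_{G(Z)}).
\]
The inner factors $\theta_{X\otimes Y}^{-1}$ and $\theta_{X\otimes Y}$ cancel, using functoriality of $\otimes$. The outer $\theta_{(X\otimes Y)\otimes Z}$ is then moved past $G(a_{X,Y,Z})$ by naturality of $\theta$: $G(a)\theta_{(X\otimes Y)\otimes Z}=\theta_{X\otimes(Y\otimes Z)}F(a)$. The left side becomes
\[
\theta_{X\otimes(Y\otimes Z)}\,\bigl[F(a)J^F_{X\otimes Y,Z}(J^F_{X,Y}\otimes\id)\bigr]\,\bigl((\theta_X^{-1}\otimes\theta_Y^{-1})\otimes\theta_Z^{-1}\bigr).
\]
The right side reduces in the same way to
\[
\theta_{X\otimes(Y\otimes Z)}\,\bigl[J^F(\id\otimes J^F)\bigr]\,\bigl(\theta_X^{-1}\otimes(\theta_Y^{-1}\otimes\theta_Z^{-1})\bigr)a^\cB_{G(X),G(Y),G(Z)}.
\]
To compare them, I use naturality of $a^\cB$ to move the $\theta^{-1}$'s past it. The two bracketed expressions then agree by the monoidal axiom for $F$.

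The unit conditions~\eqref{eq:functor-units} follow in the same way. One uses $G(l_X)\theta_{\one\otimes X}=\theta_XF(l_X)$ and naturality of $l^\cB$, so that $l^\cB_{G(X)}(\id\otimes\theta_X)=\theta_X\,l^\cB_{F(X)}$. Then the unit condition for $F$ applies, and the right unit condition is symmetric. For braidedness I check~\eqref{eq:braided-functor}. Naturality of $\theta$ gives $G(c_{X,Y})\theta_{X\otimes Y}=\theta_{Y\otimes X}F(c_{X,Y})$, and naturality of $c^\cB$ gives $(\theta_Y^{-1}\otimes\theta_X^{-1})c^\cB_{G(X),G(Y)}=c^\cB_{F(X),F(Y)}(\theta_X^{-1}\otimes\theta_Y^{-1})$. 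With these two identities, the braided condition for $G$ reduces to that for $F$.

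None of these steps is a real obstacle; the proof is routine bookkeeping with naturality. The only point needing care is the monoidal axiom, where the associator must be moved past the $\theta^{-1}$ factors using naturality of $a^\cB$ rather than assumed to be strict. I would present this as one displayed chain of equalities, following Convention~\ref{conv:coherence}.
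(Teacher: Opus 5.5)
Your proof is correct and takes the same approach as the paper. You get uniqueness from invertibility of $\theta_X\otimes\theta_Y$, then substitute the transported structure into the monoidal, unit and braiding axioms, using naturality of $\theta$ and of $a^\cB$, $l^\cB$, $r^\cB$, $c^\cB$ to cancel the components of $\theta$ and reduce each axiom to the one for $F$; you simply write out in full the associator bookkeeping that the paper only sketches.
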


\begin{proof}
The formulas are forced by the defining conditions~\eqref{eq:monoidal-nat}, cf.~\cite[Def.~2.4.8 and Rem.~2.4.9]{EGNO:2015}. We substitute~\eqref{eq:transport} into the monoidal axiom~\eqref{eq:monoidal-axiom} and the unit conditions~\eqref{eq:functor-units} for $(G,J^G,\iota^G)$. By naturality of $\theta$ and of the constraints of $\cB$, the components of $\theta$ cancel, and both conditions reduce to those for $F$. The braidedness condition follows similarly from~\eqref{eq:braided-functor} and naturality of the braiding.
\end{proof}

We will also use the following elementary cancellation property.

\begin{lemma}\label{lem:faithful}
Let $A,B$ be nonzero objects of a fusion category.
\begin{enumerate}
\item If $g\in\End(B)$ satisfies $\id_A\otimes g=0$ or $g\otimes\id_A=0$, then $g=0$.
\item If $f\in\End(A)$ and $g\in\End(B)$ satisfy $f\otimes\id_B=\id_A\otimes g$, then there is $\lambda\in\CC$ such that $f=\lambda\id_A$ and $g=\lambda\id_B$.
\end{enumerate}
\end{lemma}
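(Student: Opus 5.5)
Both parts come down to one fact: in a fusion category over $\CC$, the functor $-\otimes\id_A$ (resp.\ $\id_A\otimes-$) is faithful on morphisms when $A\neq0$. Since $A\otimes B$ is a finite direct sum of simples, it is natural to reduce to the case where $A$ is simple and then use rigidity.

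For part~(1), suppose $\id_A\otimes g=0$ with $A\neq 0$. I use the duality adjunction $\Hom(A\otimes B,A\otimes B)\cong\Hom(B,A^*\otimes A\otimes B)$, under which $\id_A\otimes g$ corresponds to the composite of $\mathrm{coev}_A\otimes g$ with canonical isomorphisms. Equivalently, applying $\id_{A^*}\otimes-$ and composing with $\mathrm{ev}_A\otimes\id_B$ and $\mathrm{coev}$ gives
\begin{equation*}
(\mathrm{ev}_A\otimes\id_B)(\id_{A^*}\otimes\id_A\otimes g)(\mathrm{coev}'\otimes\id_B)=\lambda\, g,
\end{equation*}
where $\mathrm{coev}'\colon\one\to A^*\otimes A$ is a morphism with $\mathrm{ev}_A\circ\mathrm{coev}'=\lambda\neq0$.

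Such a morphism exists because $\one$ is a direct summand of $A^*\otimes A$. Indeed, $\mathrm{ev}_A$ is a nonzero map $A^*\otimes A\to\one$: it is nonzero by the zigzag identity, since $A\neq0$. As $\one$ is simple and the category is semisimple, $\mathrm{ev}_A$ is a split epimorphism, so we may choose a splitting with $\lambda=1$. Then $g$ equals the composite above, which contains $\id_A\otimes g=0$, hence $g=0$. The case $g\otimes\id_A=0$ is symmetric, using right duals. All canonical isomorphisms are suppressed as in Convention~\ref{conv:coherence}.

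For part~(2), I first decompose $f$. Since $\End(A)$ is a finite-dimensional semisimple algebra, I pick a simple summand $S$ of $A$ with inclusion $i$ and projection $p$. Setting $f_{ST}=p_S f i_T$ for simple summands $S,T$ and tensoring the hypothesis with $p_S$ and $i_T$ gives
\begin{equation*}
f_{ST}\otimes\id_B=\delta_{ST}\,\id_S\otimes g \quad\text{(via $p_Si_T$)}.
\end{equation*}
Precisely, $(p_S\otimes\id_B)(f\otimes\id_B)(i_T\otimes\id_B)=f_{ST}\otimes\id_B$, and the right side is $(p_Si_T)\otimes g$.

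For $S\not\cong T$ or for distinct summands, $p_Si_T=0$, so $f_{ST}\otimes\id_B=0$ and part~(1) gives $f_{ST}=0$. For $S=T$, the component $f_{SS}\in\End(S)=\CC$ is $\lambda_S\id_S$ by Schur. This gives $\lambda_S\id_S\otimes\id_B=\id_S\otimes g$, so $\id_S\otimes(g-\lambda_S\id_B)=0$, and part~(1) yields $g=\lambda_S\id_B$. Hence $\lambda_S=\lambda$ is independent of $S$, because $g$ determines it and $B\neq0$. Therefore $f=\sum_S i_S\lambda p_S=\lambda\id_A$. The main subtlety is handling isotypic components with multiplicity: one must choose a full decomposition into simple summands, so that off-diagonal components within an isotypic block are also killed by $p_Si_T=0$. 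This works directly.
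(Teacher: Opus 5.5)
Your proof is correct. Part~(1) is the paper's argument: apply $\id_{A^*}\otimes-$ to $\id_A\otimes g$ and contract with $\mathrm{ev}_A$ and a section $s$ of it.

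For part~(2) you take a different route. The paper reuses the same contraction map $\pi$ on both sides of $f\otimes\id_B=\id_A\otimes g$. Since $\pi(\id_A\otimes g)=g$ and $\pi(f\otimes\id_B)=\lambda_f\id_B$ with $\lambda_f=\mathrm{ev}_A(\id_{A^*}\otimes f)s\in\End(\one)=\CC$, it gets $g=\lambda_f\id_B$ in one step. Part~(1) applied to $(f-\lambda_f\id_A)\otimes\id_B=0$ then gives $f=\lambda_f\id_A$.

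You instead decompose $A$ into simple summands. On each diagonal block you use Schur's lemma, together with part~(1) for $S$ in place of $A$, to read off $g=\lambda_S\id_B$, and you kill the off-diagonal blocks with part~(1). The paper's route is shorter: it needs only $\End(\one)=\CC$ and one splitting of $\mathrm{ev}_A$, with no decomposition of $A$ and no Schur lemma for its summands. Yours uses semisimplicity more heavily, but it makes explicit that $f$ is forced to be block-diagonal with a single common scalar, determined by $g$.

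One small imprecision: the off-diagonal components $f_{ST}=p_Sfi_T\colon T\to S$ are not endomorphisms, so part~(1) as stated does not literally apply to them. This is harmless. For $S\not\cong T$, Schur's lemma already gives $f_{ST}=0$. For $S\cong T$, you can compose with an isomorphism $S\to T$, or note that your contraction argument in part~(1) works verbatim for any morphism $B\to B'$.
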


\begin{proof}
We choose a left dual of $A$, with evaluation $\mathrm{ev}_A\colon A^*\otimes A\to\one$. By the rigidity axioms, $\mathrm{ev}_A\neq0$. By semisimplicity and simplicity of $\one$, there is therefore a section $s\colon\one\to A^*\otimes A$ with $\mathrm{ev}_A s=\id_\one$. For $h\in\End(A\otimes B)$, set
\begin{equation*}
	\pi(h):=l_B(\mathrm{ev}_A\otimes\id_B)a^{-1}_{A^*,A,B}(\id_{A^*}\otimes h)a_{A^*,A,B}(s\otimes\id_B)l_B^{-1}.
\end{equation*}
By naturality of the constraints, for $f\in\End(A)$ and $g\in\End(B)$,
\begin{equation}\label{eq:pi-cancel}
	\pi(\id_A\otimes g)=g,\qquad\pi(f\otimes\id_B)=\lambda_f\id_B,\qquad\lambda_f:=\mathrm{ev}_A(\id_{A^*}\otimes f)s\in\End(\one)=\CC.
\end{equation}

(1) If $\id_A\otimes g=0$, then $g=\pi(\id_A\otimes g)=0$ by~\eqref{eq:pi-cancel}. The case $g\otimes\id_A=0$ follows in the same way from a right dual of $A$, applied to $\End(B\otimes A)$.

(2) Applying $\pi$ to $f\otimes\id_B=\id_A\otimes g$, we get $\lambda_f\id_B=g$ by~\eqref{eq:pi-cancel}. So $(f-\lambda_f\id_A)\otimes\id_B=0$, and $f=\lambda_f\id_A$ by (1), applied with the roles of $A$ and $B$ exchanged. Hence (2) holds with $\lambda=\lambda_f$.
\end{proof}

\subsection{Fermions and slightly degenerate categories}\label{subsec:fermions}
A \emph{fermion} in a braided fusion category $\cB$ is an invertible object $\psi$ with $\psi\otimes\psi\cong\one$ and $c_{\psi,\psi}=-\id$ \cite[Def.~3.1(b)]{Galindo-Venegas-Ramirez:2025}. The fusion subcategory $\langle\psi\rangle$ is then braided equivalent to $\sVec$ \cite[Thm.~8.4.12]{EGNO:2015}. We write $\cD'$ for the centralizer of a fusion subcategory $\cD\subseteq\cB$ and $\Zt(\cB):=\cB'$ for the M\"uger center \cite[Def.~8.20.1]{EGNO:2015}. For objects $X,Y$ of $\cB$ we call the double braiding $c_{Y,X}\circ c_{X,Y}\in\Aut(X\otimes Y)$ the \emph{monodromy} of $X$ with $Y$. If $Y$ is invertible, that is, $\mathrm{ev}_Y$ and $\mathrm{coev}_Y$ are isomorphisms~\cite[Def.~2.11.1]{EGNO:2015}, then $-\otimes Y$ is an autoequivalence with quasi-inverse $-\otimes Y^*$. So if moreover $X$ is simple, then $X\otimes Y\cong Y\otimes X$ is simple, and the monodromy is a scalar, which we call the \emph{monodromy scalar}. The monodromy of $X$ with $Y$ is conjugate by $c_{X,Y}$ to the monodromy of $Y$ with $X$, and if one of them is a scalar, then so is the other with the same value. The category $\cB$ is \emph{slightly degenerate} if $\Zt(\cB)=\langle\psi\rangle$ for a fermion $\psi$~\cite{Davydov:2011dqx} which is unique up to isomorphism as the only nontrivial simple object of $\Zt(\cB)$. A super-modular category is a unitary slightly degenerate braided fusion category with its canonical ribbon structure \cite{Bruillard:2016yio}. The category $\cB$ is called almost non-degenerate if $\Zt(\cB)\in\{\Vect,\sVec\}$~\cite[Def.~3.25]{Davydov:2010kfz}.

\begin{lemma}\label{lem:mu}
Let $\psi$ be a fermion in a braided fusion category and let
$e\colon\psi\otimes\psi\to\one$ be an isomorphism. Then
\begin{equation}\label{eq:e-assoc}
l_\psi(e\otimes\id_\psi)
=r_\psi(\id_\psi\otimes e)a_{\psi,\psi,\psi},
\qquad e\circ c_{\psi,\psi}=-e.
\end{equation}
Set $\psi^0:=\one$, $\psi^1:=\psi$ and define
$\mu_{p,q}\colon\psi^p\otimes\psi^q\to\psi^{p+q}$, with exponents modulo two, by
\[
\mu_{0,q}=l_{\psi^q},\qquad \mu_{p,0}=r_{\psi^p},\qquad \mu_{1,1}=e.
\]
The two definitions of $\mu_{0,0}$ agree since $l_\one=r_\one$. These maps satisfy
\begin{subequations}\label{eq:mu-props}
\begin{align}
\mu_{p+q,r}(\mu_{p,q}\otimes\id)
&=\mu_{p,q+r}(\id\otimes\mu_{q,r})a_{\psi^p,\psi^q,\psi^r},\label{eq:mu-assoc}\\
\mu_{q,p}c_{\psi^p,\psi^q}&=(-1)^{pq}\mu_{p,q}.\label{eq:mu-comm}
\end{align}
\end{subequations}
\end{lemma}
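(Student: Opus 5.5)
The two identities in \eqref{eq:e-assoc} come first, and \eqref{eq:mu-props} then follows by a finite case check over $p,q,r\in\{0,1\}$.

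\emph{First identity.} Both sides of the first identity are isomorphisms $(\psi\otimes\psi)\otimes\psi\to\psi$, and $\End(\psi)=\CC$ because $\psi$ is simple. So the two sides differ by a nonzero scalar $\kappa$, and the task is to show $\kappa=1$. I would use rigidity. Since $\psi$ is invertible, $e$ differs by a nonzero scalar from $\mathrm{ev}_\psi$, transported along an isomorphism $\psi\cong\psi^*$. Rescaling $e$ rescales both sides equally, so $\kappa$ does not depend on the choice of $e$. One can also argue directly. Tensor the identity on the right with $\id_\psi$ and compare the two ways of reducing $\psi^{\otimes4}\to\one$ using $e$ twice. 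Naturality of $a$ and the pentagon give $\kappa^2=1$ at first. Applying this to $e\otimes e$ against $\mathrm{coev}$-type sections, as in Lemma~\ref{lem:faithful}, should pin down $\kappa=1$. The cleanest version, which I would actually write, is to choose a section $s\colon\one\to\psi\otimes\psi$ with $es=\id_\one$. Precomposing both sides with $(s\otimes\id_\psi)l_\psi^{-1}$ turns the left side into $\id_\psi$. The right side becomes the zigzag composite for the pair $(e,s)$. Because $\psi\otimes\psi\cong\one$ is one-dimensional, $(e,s)$ is an honest evaluation/coevaluation pair up to a scalar fixed by $es=1$. The zigzag axiom then makes the right side $\id_\psi$ as well.

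\emph{Second identity.} $\Hom(\psi\otimes\psi,\one)$ is one-dimensional, and $c_{\psi,\psi}=-\id$, so $e\circ c_{\psi,\psi}=-e$ immediately.

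\emph{The identities \eqref{eq:mu-props}.} For \eqref{eq:mu-assoc}, any case with an exponent $0$ reduces to a coherence identity among $a$, $l$, $r$ and $e$. The case $p=0$ uses the triangle-type identity $l_{X\otimes Y}=(l_X\otimes\id)a^{-1}$ together with naturality of $l$ with respect to $\mu_{q,r}$. The cases $q=0$ and $r=0$ are similar, using the triangle axiom and naturality of $r$. The case $p=q=r=1$ is exactly the first identity of \eqref{eq:e-assoc}. For \eqref{eq:mu-comm}, the case $p=q=1$ is the second identity. If one exponent is $0$, the braiding with $\one$ satisfies $l\circ c_{X,\one}=r$ and $r\circ c_{\one,X}=l$, and the sign is $+1$.

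\emph{Main obstacle.} The only nontrivial step is fixing the scalar $\kappa=1$ in the first identity, since everything else is formal. If the zigzag argument is awkward, I would fall back on the fact that the associator restricted to $\langle\psi\rangle\simeq\sVec$ has trivial $3$-cocycle, by \cite[Thm.~8.4.12]{EGNO:2015}, and compute in a skeletal model. This is legitimate because $\kappa$ is invariant under rescaling $e$ and under braided equivalence.
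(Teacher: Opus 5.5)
\textbf{The gap: the zigzag argument for the first identity.} The argument you say you would actually write for the first identity in \eqref{eq:e-assoc} does not work. Normalizing a section by $es=\id_\one$ does \emph{not} make $(e,s)$ an evaluation/coevaluation pair. Rigidity fixes only the product of the rescalings of $\mathrm{ev}$ and $\mathrm{coev}$. For a genuine pair exhibiting $\psi$ as its own dual, the scalar $\mathrm{ev}\circ\mathrm{coev}\in\End(\one)$ is not determined by the axioms. Write $r_\psi(\id_\psi\otimes e)a_{\psi,\psi,\psi}=\kappa\,l_\psi(e\otimes\id_\psi)$. Then the zigzag identity for such a pair gives $\id_\psi=\kappa\,(\mathrm{ev}\circ\mathrm{coev})\,\id_\psi$. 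So your claim ``$es=1$ implies the zigzag'' is equivalent to $\kappa=1$, which is what you are trying to prove; the argument is circular.

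The claim is also false without the fermion hypothesis. Take the semion category, $\Vect_{\Z/2}^\omega$ with $\omega$ nontrivial and $c_{\psi,\psi}=\pm i\,\id$. There $\psi$ is simple and invertible with $\psi\otimes\psi\cong\one$, but skeletally $a_{\psi,\psi,\psi}=-\id_\psi$. With $e=s=\id_\one$ the zigzag composite is $-\id_\psi$, and the first identity fails. Your zigzag route never uses $c_{\psi,\psi}=-\id$, so it cannot be correct. The same objection applies to the sketched ``direct'' argument via $\kappa^2=1$ and Lemma~\ref{lem:faithful}. The sign $\kappa$ detects the class of the associator of $\langle\psi\rangle$ in $H^3(\Z/2;\CC^\times)\cong\Z/2$, and purely monoidal manipulations cannot pin it down.

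\textbf{How to fix it.} Your fallback is correct and is exactly the paper's proof. Transport the strictly associative multiplication of $\sVec$ along $\langle\psi\rangle\simeq\sVec$ to get the identity for one $e_0$, then note that both sides are linear in $e$. If you want a self-contained argument that shows where the fermionic sign enters, use the hexagon axiom at $(\psi,\psi,\psi)$:
\begin{equation*}
a_{\psi,\psi,\psi}\,c_{\psi,\psi\otimes\psi}\,a_{\psi,\psi,\psi}=(\id_\psi\otimes c_{\psi,\psi})\,a_{\psi,\psi,\psi}\,(c_{\psi,\psi}\otimes\id_\psi)=a_{\psi,\psi,\psi}.
\end{equation*}
The two signs cancel, so $c_{\psi,\psi\otimes\psi}=a^{-1}_{\psi,\psi,\psi}$. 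Naturality of $c_{\psi,-}$ at $e$, together with $c_{\psi,\one}=l_\psi^{-1}r_\psi$, then gives $(e\otimes\id_\psi)\,a^{-1}_{\psi,\psi,\psi}=l_\psi^{-1}r_\psi(\id_\psi\otimes e)$. This is the first identity, and it shows that only $c_{\psi,\psi}^2=\id$ is used.

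The rest of your proposal is fine and matches the paper. That covers the second identity and the case check of \eqref{eq:mu-assoc} and \eqref{eq:mu-comm} via the triangle axiom, naturality of $l$ and $r$, and the unit braidings.
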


\begin{proof}
The braided equivalence $\langle\psi\rangle\simeq\sVec$ transports the usual
associative multiplication on the two fermion powers to maps of this form,
with some nonzero $e_0\colon\psi\otimes\psi\to\one$.
Every other $e$ is a scalar multiple of $e_0$. In the only associativity
identity with three nonzero exponents, each side contains $e$ exactly once,
so it remains valid after this rescaling. Cases involving a zero exponent
are the unit axioms and naturality of $l,r$. The braiding identity~\eqref{eq:mu-comm} follows from $c_{\psi,\psi}=-\id$ and the identities $c_{\one,X}=r_X^{-1}l_X$ and $c_{X,\one}=l_X^{-1}r_X$.
\end{proof}

\subsection{Minimal nondegenerate extensions and the parity grading}\label{subsec:grading}

Let $\cC$ be slightly degenerate with fermion $\psi$. A \emph{minimal nondegenerate extension} of $\cC$ is a nondegenerate braided fusion category $\cE$ containing $\cC$ as a full braided fusion subcategory with $\FPdim(\cE)=2\FPdim(\cC)$. Its existence is established in~\cite[Main Theorem]{Johnson-Freyd:2021chu}. In the unitary setting, with its canonical ribbon structure, it is a \emph{minimal modular extension}~\cite{Lan:2016rcq,Bruillard:2016yio}. We fix $\cC\subset\cE$ throughout.

The double-braiding signs and the resulting $\Z/2$-grading are given in~\cite[Props.~2.3--2.4]{Bruillard:2016yio} in the unitary ribbon setting. We include a proof in the general braided fusion setting used here.

\begin{lemma}\label{lem:grading}
For every simple $X\in\cE$ there is $p(X)\in\Z/2$ with
\begin{equation}\label{eq:parity}
\begin{aligned}
	M_{X,\psi}&:=c_{\psi,X}\circ c_{X,\psi}=(-1)^{p(X)}\,\id_{X\otimes\psi},\\
	\text{equivalently}\quad M_{\psi,X}&:=c_{X,\psi}\circ c_{\psi,X}=(-1)^{p(X)}\,\id_{\psi\otimes X}.
\end{aligned}
\end{equation}
If $Z$ is a simple summand of $X\otimes Y$, then $p(Z)=p(X)+p(Y)$. Let $\cE_i$ be the full subcategory of direct sums of simple objects of parity $i$. Then $\cE=\cE_0\oplus\cE_1$ is a faithful $\Z/2$-grading with $\cE_0=\cC=\langle\psi\rangle'$, and $\cC'=\langle\psi\rangle$. Every braided autoequivalence $F$ of $\cE$ with $F(\psi)\cong\psi$ satisfies $F(\cE_i)\subseteq\cE_i$.
\end{lemma}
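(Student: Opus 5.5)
The plan is to first show that the monodromy of a simple object with $\psi$ is a sign. After that, multiplicativity of parity follows from the hexagon axioms, and the structural claims follow from nondegeneracy of $\cE$ together with a dimension count.

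\emph{Step 1 (signs).} Let $X$ be simple. Since $\psi$ is invertible, $X\otimes\psi$ is simple, so $M_{X,\psi}=m_X\,\id_{X\otimes\psi}$ for a scalar $m_X$. By the remark in \S\ref{subsec:fermions}, $M_{\psi,X}$ is then the same scalar. To show $m_X^2=1$, I will compute the monodromy of $X$ with $\psi\otimes\psi$ in two ways. The hexagon axioms express $c_{\psi\otimes\psi,X}c_{X,\psi\otimes\psi}$ as a composite of two copies of $M_{X,\psi}$ (suitably whiskered), which gives $m_X^2$. On the other hand, naturality of the braiding along the isomorphism $e\colon\psi\otimes\psi\to\one$ (Lemma~\ref{lem:mu}) identifies this monodromy with the monodromy of $X$ with $\one$, which is $\id$. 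Hence $m_X^2=1$, and we set $m_X=(-1)^{p(X)}$.

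\emph{Step 2 (additivity).} Let $Z\subseteq X\otimes Y$ be a simple summand with inclusion $\iota$. The hexagon axioms give
\[
M_{X\otimes Y,\psi}=(\id_X\otimes M_{Y,\psi})\circ(\text{conjugate of }M_{X,\psi}\otimes\id_Y),
\]
which equals $(-1)^{p(X)+p(Y)}\id$. By naturality of the braiding, $M_{X\otimes Y,\psi}\circ(\iota\otimes\id_\psi)=(\iota\otimes\id_\psi)\circ M_{Z,\psi}$. Since $\iota\otimes\id_\psi\neq0$ by Lemma~\ref{lem:faithful}, this yields $p(Z)=p(X)+p(Y)$. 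It follows that $\cE_0\oplus\cE_1$ is a $\Z/2$-grading, with $\one$ and $\psi$ in degree $0$ (for $\psi$ this uses $c_{\psi,\psi}=-\id$, whose square is $\id$). By definition, $\cE_0=\langle\psi\rangle'$.

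\emph{Step 3 (identification; the main point).} Since $\cC'\supseteq\Zt(\cC)=\langle\psi\rangle$, we have $\cC\subseteq\langle\psi\rangle'=\cE_0$. Because $\cE$ is nondegenerate, the double centralizer theorem \cite[Thm.~8.21.2 and 8.21.3]{EGNO:2015} gives $\FPdim\langle\psi\rangle'=\FPdim(\cE)/2=\FPdim(\cC)$. Inclusion of fusion subcategories with equal dimension forces equality, so $\cC=\cE_0$. Then $\cC'=\langle\psi\rangle''=\langle\psi\rangle$, again by the double centralizer theorem.

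\emph{Step 4 (faithfulness and invariance).} Faithfulness follows because $\FPdim\cE_0=\FPdim(\cE)/2<\FPdim(\cE)$, so $\cE_1\neq0$. Now let $F$ be a braided autoequivalence with an isomorphism $\phi\colon F(\psi)\cong\psi$. Apply $F$ to $M_{X,\psi}$, use \eqref{eq:braided-functor} twice, and conjugate by $\phi$. This shows that the monodromy of $F(X)$ with $\psi$ equals $(-1)^{p(X)}$. Since $F(X)$ is simple, $p(F(X))=p(X)$, and additivity of $F$ gives $F(\cE_i)\subseteq\cE_i$.

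The step I expect to be the main obstacle is Step 1: keeping track of the canonical isomorphisms in the hexagon comparison and in the naturality along $e$. This is manageable using Convention~\ref{conv:coherence}.
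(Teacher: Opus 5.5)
Your proposal is correct and follows the paper's proof essentially step by step: the monodromy scalar squares to $1$ because $\psi\otimes\psi\cong\one$, additivity comes from the hexagons, $\cE_0$ is identified using the nondegenerate dimension formula and the double centralizer theorem, and parity invariance follows from braidedness of $F$. The only differences are cosmetic. You compare $\cC\subseteq\langle\psi\rangle'$ with $\FPdim\langle\psi\rangle'=\FPdim(\cC)$, whereas the paper first shows $\FPdim(\cC')=2$ and then takes double centralizers. You also get faithfulness from $\FPdim(\cE_0)<\FPdim(\cE)$ rather than from $\psi\notin\Zt(\cE)=\Vect$.
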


\begin{proof}
Let $X$ be simple. Since $\psi$ is invertible, $X\otimes\psi$ and $\psi\otimes X$ are simple, so both double braidings in~\eqref{eq:parity} are scalars. Since $M_{\psi,X}=c_{X,\psi}\,M_{X,\psi}\,c_{X,\psi}^{-1}$, the two scalars agree, and we write
\begin{equation*}
	M_{X,\psi}=\lambda_X\,\id_{X\otimes\psi},\qquad M_{\psi,X}=\lambda_X\,\id_{\psi\otimes X}.
\end{equation*}
By~\cite[Lemma~8.22.1(i)]{EGNO:2015}, if $M_{U,W}=\lambda\,\id_{U\otimes W}$ and $M_{V,W}=\mu\,\id_{V\otimes W}$, then $M_{U\otimes V,W}=\lambda\mu\,\id_{(U\otimes V)\otimes W}$. For $U=V=\psi$ and $W=X$, we get $M_{\psi\otimes\psi,X}=\lambda_X^2\,\id$. Since $\psi\otimes\psi\cong\one$ and $M_{\one,X}=\id$, we have $\lambda_X^2=1$ by naturality of the braiding. Hence $\lambda_X=(-1)^{p(X)}$ for a unique $p(X)\in\Z/2$. For $W=\psi$, $U=X$ and $V=Y$, we get $\lambda_Z=\lambda_X\lambda_Y$ for every simple summand $Z$ of $X\otimes Y$ by naturality of the braiding, that is, $p(Z)=p(X)+p(Y)$.

Since $\psi\in\Zt(\cC)$, we have $\langle\psi\rangle\subseteq\cC'$. As $\cE$ is nondegenerate, $\FPdim(\cC)\FPdim(\cC')=\FPdim(\cE)=2\FPdim(\cC)$~\cite[Cor.~8.21.7]{EGNO:2015}, so $\FPdim(\cC')=2=\FPdim(\langle\psi\rangle)$ and $\cC'=\langle\psi\rangle$ by~\cite[Prop.~6.3.3]{EGNO:2015}. By the double centralizer theorem~\cite[Cor.~8.21.2]{EGNO:2015}, $\cC=\cC''=\langle\psi\rangle'$. A simple object has parity $0$ if and only if it centralizes $\psi$, so $\cE_0=\langle\psi\rangle'=\cC$. If $\cE_1=0$, then $\psi$ centralizes $\cE$, so $\psi\in\Zt(\cE)=\Vect$, which contradicts $c_{\psi,\psi}=-\id$. Hence $\cE_1\neq0$, and since $\one\in\cE_0$, the grading is faithful.

Let $F$ be a braided autoequivalence of $\cE$ with $F(\psi)\cong\psi$, and let $X$ be simple. Applying~\eqref{eq:braided-functor} for $F$ twice, we get $F(c_{\psi,X}c_{X,\psi})\,J^F_{X,\psi}=J^F_{X,\psi}\,c_{F(\psi),F(X)}c_{F(X),F(\psi)}$. Hence the monodromy of $F(X)$ with $F(\psi)\cong\psi$ is $\lambda_X$, that is, $p(F(X))=p(X)$.
\end{proof}

In the physics literature, the fact that every braided autoequivalence $F$ of $\cE$ with $F(\psi)\cong\psi$ satisfies $F(\cE_i)\subseteq\cE_i$ appears in~\cite[\S III.A, Eqs.~(101)--(103)]{Aasen:2021vva}, where fermionic topological symmetries are shown not to change the vorticity of anyons.

We write $\cV:=\cE_1$ and call its simple objects \emph{vortices}. Thus $\cC\otimes\cV\subseteq\cV$, $\cV\otimes\cC\subseteq\cV$, $\cV\otimes\cV\subseteq\cC$, and $X\otimes\psi\in\cV$ for $X\in\cV$.

\subsection{Graded Morita theory}\label{subsec:morita}
The category $\cV$ is a $\cC$-bimodule category under $\otimes$. A left $\cC$-module functor $\cV\to\cV$ is a functor $G$ together with natural isomorphisms $s_{b,X}\colon b\otimes G(X)\to G(b\otimes X)$ ($b\in\cC$, $X\in\cV$), its \emph{module structure}. Our convention is inverse to that of \cite[Def.~7.2.1]{EGNO:2015}. We usually write $G$ for the module functor. The axioms are
\begin{subequations}\label{eq:module-functor}
\begin{align}
G(a_{b,b',X})s_{b\otimes b',X}
&=s_{b,b'\otimes X}(\id_b\otimes s_{b',X})a_{b,b',G(X)},\label{eq:module-assoc}\\
G(l_X)s_{\one,X}&=l_{G(X)}.\label{eq:module-unit}
\end{align}
\end{subequations}
Here both sides of the first equality have source
$(b\otimes b')\otimes G(X)$ and target $G(b\otimes(b'\otimes X))$
\cite[Def.~7.2.1, Rem.~7.2.5]{EGNO:2015}. A
natural transformation $\theta\colon G\Rightarrow G'$ to a module functor $G'$ with
module structure $s'$ is a \emph{module natural transformation} if the square
\begin{equation}\label{eq:module-nat}
\begin{tikzpicture}[baseline=(current bounding box.center)]
\node (tl) at (0,0) {$b\otimes G(X)$};
\node (tr) at (3.6,0) {$G(b\otimes X)$};
\node (bl) at (0,-1.6) {$b\otimes G'(X)$};
\node (br) at (3.6,-1.6) {$G'(b\otimes X)$};
\draw[->] (tl) -- node[above] {$s_{b,X}$} (tr);
\draw[->] (bl) -- node[below] {$s'_{b,X}$} (br);
\draw[->] (tl) -- node[left] {$\id_b\otimes\theta_X$} (bl);
\draw[->] (tr) -- node[right] {$\theta_{b\otimes X}$} (br);
\end{tikzpicture}
\end{equation}
commutes for all $b\in\cC$ and $X\in\cV$~\cite[Def.~7.2.2]{EGNO:2015}. For $a\in\cC$ let $R_a:=-\otimes a\colon\cV\to\cV$, $X\mapsto X\otimes a$, $f\mapsto f\otimes\id_a$, with module structure
\begin{equation*}
	s^{R_a}_{b,X}=a^{-1}_{b,X,a}\colon b\otimes(X\otimes a)\longrightarrow(b\otimes X)\otimes a.
\end{equation*}
The right unitor gives a module natural isomorphism $r\colon R_\one\Rightarrow\Id_\cV$.

\begin{fact}[Etingof--Nikshych--Ostrik]\label{fact:ENO}
Let $\cF=\bigoplus_{g\in G}\cF_g$ be a faithfully graded fusion category.
\begin{enumerate}
	\item Each $\cF_g$ is an invertible $\cF_e$-bimodule category, and tensor product induces equivalences of bimodule categories $\cF_g\boxtimes_{\cF_e}\cF_h\simeq\cF_{gh}$~\cite[Thm.~6.1]{Etingof:2009yvg}.
	\item If $\cM$ is an invertible $\cF_e$-bimodule category, then $a\mapsto R_a$ is an equivalence from $\cF_e^{\mathrm{rev}}$ to the category $\Fun_{\cF_e}(\cM,\cM)$ of left $\cF_e$-module endofunctors, with composition as tensor product~\cite[Prop.~4.2]{Etingof:2009yvg}.
\end{enumerate}
\end{fact}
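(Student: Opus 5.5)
The plan is to derive both parts from the algebra model of a module category over $\cF_e$, plus a Frobenius–Perron dimension count; this is how \cite{Etingof:2009yvg} proceeds. I would prove (2) first, at the level of generality in which it is stated, since (1) will follow from a special case of the same mechanism.

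\textbf{Part (2).} For an $\cF_e$-bimodule category $\cM$, right action gives a tensor functor $R\colon\cF_e^{\mathrm{rev}}\to\Fun_{\cF_e}(\cM,\cM)$, $a\mapsto R_a$. The module structure is the middle associativity constraint of $\cM$, exactly as for $R_a$ on $\cV$ above. Recall that $\cM$ is invertible if $\cM^{\mathrm{op}}\boxtimes_{\cF_e}\cM\simeq\cF_e$ and $\cM\boxtimes_{\cF_e}\cM^{\mathrm{op}}\simeq\cF_e$ as bimodule categories. I would use the standard identification $\Fun_{\cF_e}(\cM,\cM)\simeq\cM^{\mathrm{op}}\boxtimes_{\cF_e}\cM$. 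Under it, the functor $R$ becomes the bimodule functor $\cF_e\to\cM^{\mathrm{op}}\boxtimes_{\cF_e}\cM$ induced by the action, that is, the unit of the duality. Invertibility says precisely that this functor is an equivalence. Transporting along the identification, $R$ is an equivalence, and it is monoidal for composition by construction.

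\textbf{Part (1): dimension and indecomposability.} Fix a simple $X\in\cF_g$.
\begin{itemize}
\item By faithfulness of the grading, $\FPdim(\cF_g)=\FPdim(\cF_e)$ \cite[Prop.~8.20?]{EGNO:2015}. The argument: the regular FP-vector restricted to each component has the same total mass.
\item $\cF_g$ is indecomposable as a left $\cF_e$-module. Every simple $Y\in\cF_g$ is a summand of $(Y\otimes X^*)\otimes X$, and $Y\otimes X^*\in\cF_e$.
\end{itemize}

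\textbf{Part (1): the algebra model.} The internal Hom is $\underline{\Hom}(X,Y)=Y\otimes X^*\in\cF_e$. Set $A:=X\otimes X^*$. By the Ostrik/EGNO reconstruction theorem, $Y\mapsto Y\otimes X^*$ is an equivalence of left $\cF_e$-module categories $\cF_g\simeq\mathrm{Mod}_{\cF_e}\text{-}A$. Hence $\Fun_{\cF_e}(\cF_g,\cF_g)\simeq{}_A\mathrm{Bimod}_A(\cF_e)$. This dual fusion category has FP-dimension $\FPdim(\cF_e)$, since the dual of a fusion category with respect to an indecomposable module has the same dimension.

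\textbf{Part (1): the equivalence.} The functor $R\colon\cF_e^{\mathrm{rev}}\to\Fun_{\cF_e}(\cF_g,\cF_g)$ sends $a$ to the bimodule $X\otimes a\otimes X^*$. Source and target are fusion categories of equal FP-dimension. So by \cite[Prop.~6.3.4]{EGNO:2015} it suffices to show that $R$ is fully faithful, or even just injective on simple objects with $\End(R_a)=\CC$ for simple $a$.

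\begin{itemize}
\item \emph{Faithfulness.} A module natural transformation is determined by its component at the generator $X$. Compatibility with $\underline{\Hom}$ reduces $\Hom(R_a,R_b)$ to the $A$-bimodule maps $X a X^*\to X b X^*$.
\item \emph{Fullness.} I would show these maps are $\id_X\otimes f\otimes\id_{X^*}$ for $f\in\Hom(a,b)$ by cancelling $X$ on both sides with evaluation and coevaluation, as in Lemma~\ref{lem:faithful}. This step, making the bimodule-map computation precise, is where I expect the main obstacle. If it resists, the fallback is to compare $\dim\Hom$ via the Frobenius–Perron dimension of the image subcategory and apply \cite[Prop.~6.3.3]{EGNO:2015}.
\end{itemize}
Once $R$ is an equivalence for every $g$, this is equivalent to invertibility of $\cF_g$ by the same identification used in Part (2), read in the converse direction.

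\textbf{Part (1): the tensor-product equivalences.} The bimodule functor $\cF_g\boxtimes_{\cF_e}\cF_h\to\cF_{gh}$ is induced by $\otimes$, which is $\cF_e$-balanced. Both sides are invertible bimodule categories of FP-dimension $\FPdim(\cF_e)$. The functor is surjective on simple objects, since any simple $Z\in\cF_{gh}$ is a summand of $X\otimes(X^*\otimes Z)$ with $X^*\otimes Z\in\cF_h$. A surjective functor between invertible bimodule categories of equal dimension is an equivalence, by the same dimension argument applied after composing with the inverse bimodule $\cF_{h^{-1}}$.
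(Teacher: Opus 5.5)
The paper does not prove Fact~\ref{fact:ENO}. It quotes \cite[Thm.~6.1 and Prop.~4.2]{Etingof:2009yvg} and uses only the consequence that $a\mapsto R_a$ is an equivalence $\cC\to\Fun_\cC(\cV,\cV)$ (Corollary~\ref{cor:morita}). Your algebra model in Part (1) targets exactly this statement, and completed it would give Corollary~\ref{cor:morita} directly, bypassing invertibility.

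\textbf{First gap: fullness of $R$.} As you suspect, this decisive step is missing. The cancellation of Lemma~\ref{lem:faithful} starts from an identity $f\otimes\id=\id\otimes g$, and an $A$-bimodule map $X\otimes a\otimes X^*\to X\otimes b\otimes X^*$ does not supply one. The $A$-actions have to be used essentially. For example, in the Ising category take $\cF_e=\langle\psi\rangle$ and $X=\sigma$. The bimodules of $R_\one$ and $R_\psi$ both have underlying object $\one\oplus\psi$, so there are nonzero morphisms between them in $\cF_e$, although $\Hom(\one,\psi)=0$. Your fallback is not yet an argument either: the image of $R$ has full Frobenius--Perron dimension only once $R$ is known to be surjective or fully faithful.

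Within your framework, surjectivity is the easy direction. Every $A$-bimodule $M$ in $\cF_e$ is a quotient of the free bimodule
\[
A\otimes M\otimes A=X\otimes(X^*\otimes M\otimes X)\otimes X^*.
\]
This is precisely the bimodule of $R_{X^*\otimes M\otimes X}$, with $X^*\otimes M\otimes X\in\cF_e$. A surjective tensor functor between fusion categories of equal Frobenius--Perron dimension is an equivalence~\cite[\S6.3]{EGNO:2015}.

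Alternatively, use that $A=X\otimes X^*$ is Morita equivalent to $\one$ in all of $\cF$. Then $V\mapsto X\otimes V\otimes X^*$ is an equivalence $\cF\simeq{}_A\mathrm{Bimod}_A(\cF)$. Since $X\otimes V\otimes X^*\in\cF_e$ forces $V\in\cF_e$, it restricts to an equivalence $\cF_e\simeq{}_A\mathrm{Bimod}_A(\cF_e)$. This is the precise form of "cancelling $X$".

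\textbf{Second gap: the tensor-product step.} The last step of Part (1) rests on a false criterion. A surjective functor between invertible bimodule categories of equal dimension need not be an equivalence. For $\cF_e=\Vect$, the functor $V\mapsto V\oplus V$ on $\Vect$ is a bimodule functor between invertible bimodule categories that is surjective on simple objects but not an equivalence. Dimension counting controls tensor functors because they preserve Frobenius--Perron dimensions of objects, and bimodule functors need not do so. You also never define the dimension of a bimodule category.

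You have to use the specific functor $M_{g,h}\colon\cF_g\boxtimes_{\cF_e}\cF_h\to\cF_{gh}$. For instance, model $\cF_g$, as a right $\cF_e$-module category, by left modules over $B=\underline{\End}(X)\in\cF_e$. Then $\cF_g\boxtimes_{\cF_e}\cF_h$ becomes the category of left $B$-modules in $\cF_h$. Under this identification, $M_{g,h}$ becomes the restriction to degree $h$ of the Morita equivalence ${}_B\mathrm{Mod}(\cF)\simeq\cF$, $N\mapsto X\otimes_B N$.

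\textbf{Two smaller points.} First, in Part (2), invertibility only provides \emph{some} bimodule equivalence $\cM^{\mathrm{op}}\boxtimes_{\cF_e}\cM\simeq\cF_e$. You still need the standard observation that a bimodule equivalence out of $\cF_e$ is the action on the image $N$ of $\one$, with $N$ invertible; only then is the canonical functor an equivalence too. Second, deducing invertibility of $\cF_g$ from $R$ alone also needs the left-multiplication functor into right-module endofunctors to be an equivalence. This follows by the mirror argument or by the double centralizer theorem.
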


We only use essential surjectivity and full faithfulness in Fact~\ref{fact:ENO}(2), so the conventions on ``rev'' and on the order of composition play no role.

\begin{corollary}\label{cor:morita}
Let $\cE=\cC\oplus\cV$ be as in Lemma~\ref{lem:grading}.
\begin{enumerate}
	\item Every left $\cC$-module endofunctor of $\cV$ is isomorphic, as a module functor, to $R_a$ for some $a\in\cC$.
	\item $\Hom_{\Fun_\cC(\cV,\cV)}(R_a,R_{a'})\cong\Hom_\cC(a,a')$. In particular $R_a\cong R_{a'}$ if and only if $a\cong a'$, and $R_\one\not\cong R_\psi$.
\end{enumerate}
\end{corollary}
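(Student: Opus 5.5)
The plan is to deduce both parts directly from Fact~\ref{fact:ENO}, applied to the faithful $\Z/2$-grading $\cE=\cC\oplus\cV$ of Lemma~\ref{lem:grading}. Here $\cF=\cE$, $G=\Z/2$, $\cF_e=\cC$ and $\cF_1=\cV$. The grading is faithful by Lemma~\ref{lem:grading}, so Fact~\ref{fact:ENO}(1) shows that $\cV$ is an invertible $\cC$-bimodule category.

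For (1), I will invoke Fact~\ref{fact:ENO}(2) with $\cM=\cV$. It says that $a\mapsto R_a$ is an equivalence $\cC^{\mathrm{rev}}\to\Fun_\cC(\cV,\cV)$, and essential surjectivity gives exactly that every left $\cC$-module endofunctor is module-isomorphic to some $R_a$. One point needs care. The module structure convention here is inverse to that of EGNO, and I must check that $R_a$ with $s^{R_a}_{b,X}=a^{-1}_{b,X,a}$ is the functor that Fact~\ref{fact:ENO} refers to. Inverting $s$ gives a bijection between the two kinds of module structures and preserves module natural transformations. So the categories $\Fun_\cC(\cV,\cV)$ in the two conventions are isomorphic, and $R_a$ corresponds to the standard right-multiplication functor with EGNO structure $a_{b,X,a}$. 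I also need to verify that $R_a$ satisfies \eqref{eq:module-assoc} and \eqref{eq:module-unit}; this follows from the pentagon and triangle axioms.

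For (2), full faithfulness of $a\mapsto R_a$ gives $\Hom_\cC(a,a')\cong\Hom_{\Fun_\cC(\cV,\cV)}(R_a,R_{a'})$ via $f\mapsto \id\otimes f$. The ``rev'' only affects the monoidal structure and does not change Hom spaces. Any fully faithful functor reflects isomorphisms, so $R_a\cong R_{a'}$ if and only if $a\cong a'$. Finally, $\one\not\cong\psi$ because $\psi$ is a nontrivial simple object, since $c_{\psi,\psi}=-\id\neq c_{\one,\one}$. Hence $R_\one\not\cong R_\psi$.

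The only step I expect to be an obstacle is the convention bookkeeping in (1): matching the inverse module structure, and whether the relevant functors are left module functors given by right multiplication. As the remark after Fact~\ref{fact:ENO} notes, only essential surjectivity and full faithfulness are used, so I do not expect a real difficulty there. If one wanted to avoid citing the full strength of the Fact, a direct check of $R_\one\not\cong R_\psi$ is also available. A module isomorphism $\theta\colon R_\one\Rightarrow R_\psi$ would give isomorphisms $X\otimes\one\cong X\otimes\psi$ for all vortices $X$, natural with respect to $\cC$-action. Through duality, such a $\theta$ is induced by a morphism $\one\to\psi$, which is zero.
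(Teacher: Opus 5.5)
Your proposal is correct and follows the paper's proof. Faithfulness of the grading together with Fact~\ref{fact:ENO}(1) makes $\cV$ invertible; essential surjectivity and full faithfulness of $a\mapsto R_a$ from Fact~\ref{fact:ENO}(2) then give (1) and (2), and $\psi\not\cong\one$ gives the last claim. Your extra bookkeeping about the inverted module-structure convention is harmless.

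The optional ``direct check'' at the end should not be offered as an independent argument. Its step ``through duality $\theta$ is induced by a morphism $\one\to\psi$'' is exactly the full-faithfulness content of Fact~\ref{fact:ENO}(2). That content depends on invertibility of $\cV$ and cannot be seen on objects; in the Ising example $\sigma\otimes\psi\cong\sigma$.
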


\begin{proof}
The grading is faithful by Lemma~\ref{lem:grading}, so $\cV$ is an invertible $\cC$-bimodule category by Fact~\ref{fact:ENO}(1). By Fact~\ref{fact:ENO}(2), the functor $a\mapsto R_a$ from $\cC$ to $\Fun_\cC(\cV,\cV)$ is therefore essentially surjective and fully faithful.

(1) This is the essential surjectivity of $a\mapsto R_a$.

(2) The isomorphism is the full faithfulness of $a\mapsto R_a$. In particular, $R_a\cong R_{a'}$ if and only if $a\cong a'$. Since $\psi\not\cong\one$, we get $R_\one\not\cong R_\psi$.
\end{proof}

\section{Locality-respecting autoequivalences}\label{sec:LR}

In this section $\cB$ denotes a braided fusion category with a chosen fermion $\psi$, which may be transparent or nontransparent. We will apply the definitions to both $\cB=\cC$ and $\cB=\cE$. We fix an isomorphism $e\colon\psi\otimes\psi\to\one$, chosen unitary in the unitary setting.

The following definition specializes the fermionic tensor functors and natural transformations of~\cite[Defs.~3.5 and~3.9, Ex.~3.7]{Galindo-Venegas-Ramirez:2025} to braided autoequivalences, with the isomorphism $e$ kept explicit. The terminology \emph{locality-respecting} and the notation $\AutLR$ follow~\cite[\S IV.B]{Bulmash:2021hmb}.

\begin{definition}\label{def:LR}
A \emph{locality-respecting} (LR) autoequivalence of $(\cB,\psi)$ is a pair $(F,\varphi)$, where $F$ is a braided autoequivalence of $\cB$ and $\varphi\colon F(\psi)\to\psi$ is an isomorphism satisfying the \emph{local-vertex condition}~\cite[Def.~3.5, Eq.~(3.1)]{Galindo-Venegas-Ramirez:2025}
\begin{equation}\label{eq:vertex}
	e\circ(\varphi\otimes\varphi)=F(e)\circ J^F_{\psi,\psi}\colon F(\psi)\otimes F(\psi)\to\one .
\end{equation}
Here $F$ is unit-normalized.
This condition does not depend on the choice of $e$: replacing $e$ by $se$, $s\in\CC^\times$, multiplies both sides by $s$, because $F$ is linear. A \emph{morphism of pairs} $(F,\varphi)\to(G,\varphi')$ is a monoidal natural isomorphism $\eta\colon F\Rightarrow G$ with $\varphi'\circ\eta_\psi=\varphi$. Since $\eta$ is invertible, $\eta^{-1}$ is a morphism of pairs $(G,\varphi')\to(F,\varphi)$, so every morphism of pairs is an isomorphism. The \emph{composite} of pairs is
\begin{equation}\label{eq:compose-pairs}
	(F,\varphi)\cdot(G,\varphi'):=(F\circ G,\ \varphi\circ F(\varphi')).
\end{equation}
We write $[F,\varphi]$ for the isomorphism class of $(F,\varphi)$ and $\AutLR(\cB)$ for the set of classes. In the unitary setting, $F$, $\varphi$ and $\eta$ are required to be unitary, and the resulting set is
denoted $\AutLRu(\cB)$.
\end{definition}

\begin{remark}\label{rem:LR-meaning}
Since $\langle\psi\rangle\simeq\sVec$, condition~\eqref{eq:vertex} is precisely the condition that $\varphi$ extend uniquely to a monoidal natural isomorphism
\begin{equation*}
	\widehat\varphi\colon F|_{\langle\psi\rangle}\Rightarrow \Id_{\langle\psi\rangle},\qquad\widehat\varphi_{\one}=\id_{\one},\quad\widehat\varphi_\psi=\varphi.
\end{equation*}
Indeed, on simple objects the only nontrivial instance of the tensor condition in~\eqref{eq:monoidal-nat} for $\widehat\varphi$ is at $(\psi,\psi)$. There $\widehat\varphi_{\psi\otimes\psi}=e^{-1}F(e)$ by naturality of $\widehat\varphi$ at $e$, so the condition reads $e^{-1}F(e)J^F_{\psi,\psi}=\varphi\otimes\varphi$, which is~\eqref{eq:vertex}. Thus an LR pair records a \emph{chosen} monoidal trivialization on the fermion subcategory.

Moreover, if $\eta\colon F\Rightarrow G$ is a monoidal natural isomorphism and $(G,\varphi')$ is an LR pair, then $(F,\varphi'\circ\eta_\psi)$ is an LR pair, and $\eta$ is an isomorphism between these pairs by Definition~\ref{def:LR}. Indeed, by~\eqref{eq:vertex} for $(G,\varphi')$, the two conditions in~\eqref{eq:monoidal-nat} for $\eta$, and naturality of $\eta$ at $e$, we have
\begin{equation*}
	e\circ(\varphi'\eta_\psi\otimes\varphi'\eta_\psi)=G(e)\,J^G_{\psi,\psi}\,(\eta_\psi\otimes\eta_\psi)=G(e)\,\eta_{\psi\otimes\psi}\,J^F_{\psi,\psi}=\eta_\one\,F(e)\,J^F_{\psi,\psi}=F(e)\,J^F_{\psi,\psi}.
\end{equation*}
\end{remark}

Next we show that $\AutLR(\cB)$ is a group under the composition~\eqref{eq:compose-pairs}. We also show that a braided autoequivalence $F$ with $F(\psi)\cong\psi$ admits exactly two isomorphisms $F(\psi)\to\psi$ satisfying~\eqref{eq:vertex}, which differ by a sign, and that changing this sign amounts to multiplication by the central element $[\Id_\cB,-\id_\psi]$.

\begin{lemma}\label{lem:LR-group}
\begin{enumerate}
\item If $F$ is a braided autoequivalence with $F(\psi)\cong\psi$, then there exists an isomorphism $\varphi_0\colon F(\psi)\to\psi$ satisfying~\eqref{eq:vertex}, and the only isomorphisms satisfying this condition are $\varphi_0$ and $-\varphi_0$. In the unitary setting, if $F$ and $e$ are unitary, then $\varphi_0$ is unitary.
\item Composition~\eqref{eq:compose-pairs} induces a group structure on $\AutLR(\cB)$.
\item Put $\check\Upsilon:=[\Id_\cB,-\id_\psi]$. Then
$[F,-\varphi]=[F,\varphi]\cdot\check\Upsilon=\check\Upsilon\cdot[F,\varphi]$,
and $\check\Upsilon^2=1$.
\end{enumerate}
\end{lemma}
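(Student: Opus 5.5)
For (1), I would fix an isomorphism $\varphi_1\colon F(\psi)\to\psi$, which exists by hypothesis, and compare the two sides of~\eqref{eq:vertex}. Both $e\circ(\varphi_1\otimes\varphi_1)$ and $F(e)\circ J^F_{\psi,\psi}$ are isomorphisms $F(\psi)\otimes F(\psi)\to\one$ (using $F(\one)=\one$ by unit normalization). Since $F(\psi)\otimes F(\psi)\cong\one$ is simple, the space of such maps is one-dimensional. Hence $F(e)J^F_{\psi,\psi}=c\,e(\varphi_1\otimes\varphi_1)$ for some $c\in\CC^\times$.

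Replacing $\varphi_1$ by $t\varphi_1$ multiplies the left side of~\eqref{eq:vertex} by $t^2$. Since every isomorphism $F(\psi)\to\psi$ has this form, the solutions are exactly $t\varphi_1$ with $t^2=c$, namely $\pm\varphi_0$ with $\varphi_0=\sqrt c\,\varphi_1$.

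In the unitary case I would start from a unitary $\varphi_1$. Then both sides of~\eqref{eq:vertex} are unitary maps between simple objects, because $F$, $J^F$ and $e$ are unitary. Comparing norms gives $|c|=1$, so $\varphi_0$ is unitary.

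For (2), I would first check that the composite~\eqref{eq:compose-pairs} is again LR. By~\eqref{eq:composite} we have $J^{FG}_{\psi,\psi}=F(J^G_{\psi,\psi})J^F_{G\psi,G\psi}$. I would apply $F$ to~\eqref{eq:vertex} for $(G,\varphi')$. Then I would use naturality of $J^F$ along $\varphi'\otimes\varphi'$, and finally~\eqref{eq:vertex} for $(F,\varphi)$. This yields
\[
F(G(e))J^{FG}_{\psi,\psi}=e\circ(\varphi F(\varphi')\otimes\varphi F(\varphi')).
\]

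Next I would check that composition is well defined on classes. Given morphisms of pairs $\eta\colon(F,\varphi)\to(F',\tilde\varphi)$ and $\xi\colon(G,\varphi')\to(G',\tilde\varphi')$, the horizontal composite $\eta*\xi=\eta_{G'}\circ F(\xi)$ is monoidal. It is compatible with the new $\varphi$ by naturality of $\eta$ at $\tilde\varphi'$.

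The group axioms then follow directly. Associativity holds strictly on pairs, since $F\circ(G\circ H)=(F\circ G)\circ H$ and $\varphi F(\varphi'F'(\varphi''))$ agrees with the other bracketing. The unit is $(\Id,\id_\psi)$.

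For inverses, I would take a quasi-inverse $\bar F$ of $F$, which is braided and satisfies $\bar F(\psi)\cong\psi$. By (1) it admits some LR datum $\bar\varphi$. The composite $F\bar F$ is monoidally isomorphic to $\Id$ via some $\epsilon$. By the second part of Remark~\ref{rem:LR-meaning}, $(F\bar F,\id\circ\epsilon_\psi)$ is LR and isomorphic to $(\Id,\id_\psi)$. By (1), $\varphi F(\bar\varphi)=\pm\epsilon_\psi$. If the sign is wrong, I would replace $\bar\varphi$ by $-\bar\varphi$, which flips the sign because $F$ is linear. This gives a right inverse. The same argument gives a left inverse, and in a monoid these coincide.

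For (3), I would compute
\[
(F,\varphi)\cdot(\Id,-\id_\psi)=(F,-\varphi F(\id_\psi))=(F,-\varphi),
\]
and
\[
(\Id,-\id_\psi)\cdot(F,\varphi)=(F,-\varphi),
\]
which are literal equalities of pairs. Then $\check\Upsilon^2=[\Id,\id_\psi]$ is immediate. Note that $(\Id,-\id_\psi)$ is itself LR because $(-1)^2=1$.

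I expect the main (mild) obstacle to be the existence of inverses in (2). A quasi-inverse must be chosen with the correct sign of $\bar\varphi$, and the identification of $(F\bar F,\varphi F(\bar\varphi))$ with the unit pair must go through Remark~\ref{rem:LR-meaning} together with the two-solution rigidity from (1). The unitary versions use unitary quasi-inverses and unitary $\epsilon$, which are available in the unitary setting.
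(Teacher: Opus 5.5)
Your proposal is correct and takes essentially the same route as the paper. Part (1) uses one-dimensionality of $\Hom(F(\psi)\otimes F(\psi),\one)$ to reduce to $s^2=\lambda$. Part (2) uses horizontal composites for well-definedness and a quasi-inverse whose marking sign is corrected via (1) and Remark~\ref{rem:LR-meaning}, and part (3) is the same literal computation. The only differences are cosmetic: you check that composites are LR by a direct computation with naturality of $J^F$ rather than by composing monoidal trivializations on $\langle\psi\rangle$, and you obtain unitarity by rescaling a unitary $\varphi_1$ with $|c|=1$ rather than by showing $\varphi^\dagger\varphi=\id$ for an arbitrary solution.
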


\begin{proof}
(1) Fix an isomorphism $\omega\colon F(\psi)\to\psi$. Since $\End(\psi)=\CC$, every isomorphism $F(\psi)\to\psi$ is $s\omega$ for a unique $s\in\CC^\times$. Both sides of~\eqref{eq:vertex} for $\omega$ are isomorphisms $F(\psi)\otimes F(\psi)\to\one$, and $\Hom(F(\psi)\otimes F(\psi),\one)\cong\End(\one)=\CC$, so $F(e)\circ J^F_{\psi,\psi}=\lambda\,e\circ(\omega\otimes\omega)$ for a unique $\lambda\in\CC^\times$. Hence $s\omega$ satisfies~\eqref{eq:vertex} if and only if $s^2=\lambda$, and the solutions are $\pm\varphi_0$ with $\varphi_0:=\sqrt\lambda\,\omega$. For $\psi\otimes\psi=\one$ and $e=\id$ this is~\cite[Prop.~3.6(1),(3)]{Galindo-Venegas-Ramirez:2025}. If $F$ and $e$ are unitary and $\varphi$ is a solution, then $\varphi\otimes\varphi=e^{-1}F(e)J^F_{\psi,\psi}$ is unitary. Writing $\varphi^\dagger\varphi=c\,\id_{F(\psi)}$ with $c>0$, we get $c^2=1$, so $c=1$ and $\varphi$ is unitary.

(2) By Remark~\ref{rem:LR-meaning}, an LR pair is a braided autoequivalence equipped with a chosen monoidal trivialization on $\langle\psi\rangle$. Composing these trivializations gives
\begin{equation*}
	FG(\psi)\xrightarrow{\,F(\varphi')\,}F(\psi)\xrightarrow{\,\varphi\,}\psi.
\end{equation*}
Hence $(FG,\varphi F(\varphi'))$ is again an LR pair.

We next check that the product is independent of representatives. Suppose $\eta\colon(F,\varphi)\Rightarrow(\widetilde F,\widetilde\varphi)$ and $\theta\colon(G,\varphi')\Rightarrow(\widetilde G,\widetilde\varphi')$ are isomorphisms of pairs. Their horizontal composite
\begin{equation*}
	(\eta*\theta)_X=\eta_{\widetilde G(X)}\circ F(\theta_X)\colon FG(X)\longrightarrow\widetilde F\widetilde G(X)
\end{equation*}
is a monoidal natural isomorphism. By naturality of $\eta$ and the marking conditions for $\eta,\theta$, we have	
\begin{equation*}
	\widetilde\varphi\,\widetilde F(\widetilde\varphi')\,(\eta*\theta)_\psi=\widetilde\varphi\,\eta_\psi\,
  F(\widetilde\varphi'\theta_\psi)=\varphi F(\varphi').
\end{equation*}
Thus it is an isomorphism of the composite pairs, so the product
descends to classes.

Composition is associative since both bracketings of three pairs have tensor functor $FGH$ and marking $\varphi F(\varphi')FG(\varphi'')$. The identity is $[\Id,\id_\psi]$.

It remains to construct inverses. We choose a unit-normalized tensor quasi-inverse $G$ of $F$ and a monoidal isomorphism $\epsilon\colon FG\Rightarrow\Id$. Since $FG\cong\Id$ monoidally, $FG$ is braided, and since $F$ is braided and faithful, $G$ is braided. Moreover, $G(\psi)\cong GF(\psi)\cong\psi$ by $\varphi$ and $GF\cong\Id$. By (1), choose a marking $\varphi'$ making $(G,\varphi')$ an LR pair. Both $\epsilon_\psi$ and $\varphi F(\varphi')$ are valid markings for $FG$, by Remark~\ref{rem:LR-meaning} and closure under composition. By (1), they differ by at most a sign. Replacing $\varphi'$ by $-\varphi'$ if necessary therefore makes $\epsilon$ an isomorphism
\begin{equation*}
	(F,\varphi)(G,\varphi')\cong(\Id,\id_\psi).
\end{equation*}
Thus every class has a right inverse. A monoid in which every element has a right inverse is a group. Indeed, if $ab=1$, choose $c$ with $bc=1$. Then $a=a(bc)=(ab)c=c$, so $ba=bc=1$.

(3) By~\eqref{eq:compose-pairs}, $(F,\varphi)(\Id,-\id_\psi)=(F,-\varphi)=(\Id,-\id_\psi)(F,\varphi)$, since $F(-\id_\psi)=-\id_{F(\psi)}$. For $(F,\varphi)=(\Id,-\id_\psi)$, we get $\check\Upsilon^2=[\Id,\id_\psi]=1$.
\end{proof}

\begin{definition}\label{def:restriction}
Let $\cC\subset\cE$ be a minimal nondegenerate extension. By Lemma~\ref{lem:grading}, every braided autoequivalence $F$ of $\cE$ with $F(\psi)\cong\psi$ preserves $\cC$, and so does a quasi-inverse, which also fixes $\psi$ up to isomorphism. Hence $F$ restricts to a braided autoequivalence $F|_\cC$ of $\cC$. The \emph{restriction homomorphism} is
\begin{equation*}
	r\colon\AutLR(\cE)\to\AutLR(\cC),\qquad [F,\varphi]\mapsto[F|_\cC,\varphi].
\end{equation*}
Thus $[F,\varphi]\in\ker r$ if and only if there is a morphism of pairs $(F|_\cC,\varphi)\to(\Id_\cC,\id_\psi)$, that is, a monoidal natural isomorphism $\eta\colon F|_\cC\Rightarrow\Id_\cC$ with $\eta_\psi=\varphi$. The same definitions apply to $\AutLRu$.
\end{definition}

\begin{lemma}\label{lem:unitary}
Let $\cB$ be a unitary braided fusion category with fermion $\psi$, and let $e$ be unitary. Then the forgetful map $\AutLRu(\cB)\to\AutLR(\cB)$ is bijective, and $\AutLRu(\cB)$ is a group under~\eqref{eq:compose-pairs}. If $\cC\subset\cE$ is a unitary minimal nondegenerate extension, then the bijections for $\cB=\cE$ and $\cB=\cC$ commute with the restriction maps, and hence identify the unitary and linear restriction homomorphisms.
\end{lemma}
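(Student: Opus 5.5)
The plan is to show that the forgetful map is well defined, injective and surjective, and then to check that it respects the group structures and the restriction maps.

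\emph{Well defined and injective.} A unitary LR pair is in particular an LR pair, and a unitary morphism of pairs is in particular a morphism of pairs, so the forgetful map descends to classes. For injectivity, suppose two unitary pairs $(F,\varphi)$ and $(G,\varphi')$ are linked by a monoidal natural isomorphism $\eta$ with $\varphi'\eta_\psi=\varphi$. We then need a \emph{unitary} morphism of pairs. The natural choice is the polar part of $\eta$. Because $F$ and $G$ are unitary tensor functors, $\eta^\dagger\eta$ is a positive monoidal natural automorphism of $F$, so it has a monoidal positive square root $|\eta|$, obtained by functional calculus on each simple component. Then $u:=\eta|\eta|^{-1}$ is a unitary monoidal isomorphism. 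At $\psi$, the endomorphism spaces are one dimensional, and $\varphi,\varphi'$ are unitary, so $\eta_\psi$ is a unitary scalar multiple of a unitary map. Hence $|\eta|_\psi=\id$ and $u_\psi=\eta_\psi$, and the marking condition is preserved. This is the standard fact, used e.g.\ in \cite{Galindo-Venegas-Ramirez:2025}, that monoidally isomorphic unitary tensor functors are unitarily monoidally isomorphic.

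\emph{Surjectivity.} Given an arbitrary LR pair $(F,\varphi)$, we first replace $F$ by a monoidally isomorphic unitary braided functor $\widetilde F$ through a natural isomorphism $\eta$. For braided autoequivalences of a unitary fusion category this is possible by the unitarizability of tensor functors between unitary fusion categories (a result of Galindo, and Reutter). Transport preserves braidedness by Lemma~\ref{lem:transport}. By Remark~\ref{rem:LR-meaning}, $(F,\varphi)\cong(\widetilde F,\varphi\eta_\psi^{-1})$. The marking $\varphi\eta_\psi^{-1}$ satisfies~\eqref{eq:vertex} for the unitary $\widetilde F$. Since the solutions are $\pm\varphi_0$ and $\varphi_0$ is unitary by Lemma~\ref{lem:LR-group}(1), this marking is unitary. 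I expect this unitarization step to be the main obstacle, because it relies on an external theorem rather than on the machinery of the paper.

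\emph{Group structure and restriction maps.} Composites of unitary pairs are unitary, and the inverse constructed in Lemma~\ref{lem:LR-group}(2) can be chosen unitary by the same polar argument. The forgetful map is therefore a bijective homomorphism, so $\AutLRu(\cB)$ is a group isomorphic to $\AutLR(\cB)$. Finally, restricting a unitary pair to $\cC$ gives a unitary pair. The restriction maps for $\AutLRu$ and $\AutLR$ are both given by $[F,\varphi]\mapsto[F|_\cC,\varphi]$, so the square formed with the forgetful bijections commutes on representatives. This identifies the unitary and linear restriction homomorphisms.
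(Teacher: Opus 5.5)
Your proposal is correct. For surjectivity, the group structure and the restriction maps it follows the paper. The paper also unitarizes $F$ by an external theorem (\cite[Thm.~5]{Reutter:2023}), transports the marking via Remark~\ref{rem:LR-meaning}, and gets unitarity of the new marking from Lemma~\ref{lem:LR-group}(1).

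Where you differ is injectivity. The paper quotes the stronger rigidity result \cite[Thm.~2]{Reutter:2023}: every monoidal natural isomorphism between unitary monoidal equivalences is automatically unitary. So the given morphism of pairs $\eta$ is already unitary and needs no modification. You instead pass to the polar part $u=\eta|\eta|^{-1}$. This needs only the elementary fact that monoidally isomorphic unitary tensor functors are unitarily monoidally isomorphic, together with your correct observation that $\eta_\psi=\varphi'^{-1}\varphi$ is unitary, so $|\eta|_\psi=\id$ and the marking condition survives. That makes the step essentially self-contained.

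Note that monoidality of $|\eta|$ is not mere componentwise functional calculus. It follows from $P_{X\otimes Y}=J^F(P_X\otimes P_Y)(J^F)^{-1}$ for $P=\eta^\dagger\eta$, unitarity of $J^F$, and $(P_X\otimes P_Y)^{1/2}=P_X^{1/2}\otimes P_Y^{1/2}$ by uniqueness of positive square roots. The paper's route yields more, namely that linear and unitary morphisms between unitary pairs coincide; yours gives exactly what the lemma needs.

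Your separate remark about choosing the inverse from Lemma~\ref{lem:LR-group}(2) unitary is unnecessary. Once the forgetful map is a multiplicative bijection onto a group, $\AutLRu(\cB)$ is already a group.
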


\begin{proof}
Let $(F,\varphi)$ be an LR autoequivalence of $\cB$. By~\cite[Thm.~5]{Reutter:2023}, there is a monoidal natural isomorphism $\eta\colon F\Rightarrow F'$ onto a unitary monoidal autoequivalence $F'$. It is braided, because $J^{F'}$ is transported from $J^F$ along $\eta$ (Section~\ref{subsec:tensor-functors}). We write $\varphi':=\varphi\circ\eta_\psi^{-1}$. By Remark~\ref{rem:LR-meaning}, $(F',\varphi')$ is an LR autoequivalence and $\eta$ is a morphism of pairs $(F,\varphi)\to(F',\varphi')$. By Lemma~\ref{lem:LR-group}(1), $\varphi'$ is unitary. Hence the forgetful map is surjective. A morphism of pairs between unitary pairs is a monoidal natural isomorphism between unitary monoidal equivalences, hence unitary by~\cite[Thm.~2]{Reutter:2023}. Hence the forgetful map is injective. It is multiplicative, since composites of unitary pairs are unitary, so $\AutLRu(\cB)$ is a group. Restriction to $\cC$ maps unitary pairs to unitary pairs, so the bijections commute with $r$.
\end{proof}

\begin{remark}\label{rem:BB}
In~\cite{Barkeshli:2014cna,Bulmash:2021hmb,Bulmash:2021ryq} a symmetry is described in a skeletal gauge. It consists of a permutation $\rho$ of the simple labels and $U$-symbols $U_\rho(a,b;c)$, the matrices in chosen bases of the fusion-space maps $V^{ab}_c\to V^{\rho(a)\rho(b)}_{\rho(c)}$, $v\mapsto\rho(v)\circ J^\rho_{a,b}$, where $V^{ab}_c=\Hom(a\otimes b,c)$.

A natural isomorphism between two symmetries with the same permutation is, by Schur's lemma~\cite[Prop.~1.8.4]{EGNO:2015}, a family of scalars $\gamma_a$. It changes the $U$-symbols by the ratio of the $\gamma$'s~\cite[Eq.~(22)]{Bulmash:2021ryq}.

Bulmash and Barkeshli require $U_\rho(\psi,\psi;\one)=1$~\cite[Eq.~(65)]{Bulmash:2021ryq}, and they allow only natural isomorphisms admitting a presentation with $\gamma_\psi=1$~\cite[Eq.~(77)]{Bulmash:2021ryq}. In a skeletal model $F(\psi)=\psi$, and the dictionary is as follows:
\begin{itemize}
\item $U_F(\psi,\psi;\one)=1$ is \eqref{eq:vertex} with $\varphi=\id_\psi$;
\item a natural isomorphism with $\gamma_\psi=1$ is a morphism of pairs
$(F,\id_\psi)\to(G,\id_\psi)$.
\end{itemize}
More generally, we write $\varphi=t\,\id_\psi$ with $t\in\CC^\times$ in a skeletal model and then transport $F$ along the natural isomorphism whose component at $\psi$ is $\varphi$ and whose other simple components are identities. By Lemma~\ref{lem:transport} and Remark~\ref{rem:LR-meaning}, we obtain an isomorphic LR pair $(F^\gamma,\id_\psi)$, for which $U_{F^\gamma}(\psi,\psi;\one)=1$. Thus the dictionary covers every LR pair. In particular, every LR pair $(F,-\id_\psi)$ is isomorphic to a pair $(F^\gamma,\id_\psi)$. Here $F^\gamma$ is $F$ transported along the natural isomorphism with $\gamma_\psi=-1$ and $\gamma_a=1$ for $a\neq\psi$. Accordingly, the natural isomorphisms $\Upsilon_\psi$ and $\check\Upsilon_\psi$ of \cite[Eqs.~(72)--(73)]{Bulmash:2021ryq} correspond to the classes $[\Id_\cC,-\id_\psi]$ and $[\Id_\cE,-\id_\psi]$.

Consequently $\AutLRu(\cB)$ is the group denoted $\operatorname{Aut}_{LR}(\cB)$ in~\cite[\S IV.C]{Bulmash:2021ryq} and the fermionic topological symmetry group $\operatorname{Aut}_f(\cB)$ of~\cite[\S III.A]{Aasen:2021vva}, which is defined by the same two conditions~\cite[Eqs.~(104) and~(107)]{Aasen:2021vva}, in both cases restricted to complex-linear symmetries. The map $r$ is the restriction map of~\cite[Eq.~(83)]{Bulmash:2021ryq}.
\end{remark}

\section{The vortex sector of a kernel element}\label{sec:vortex}

In this section $\cC\subset\cE$ is a minimal nondegenerate extension, and we write $\cE=\cC\oplus\cV$ for the parity grading of Lemma~\ref{lem:grading}.

\begin{definition}\label{def:normalized}
A pair $(\Phi,\id_\psi)$ is \emph{normalized} if $\Phi$ is a braided autoequivalence of $\cE$ whose restriction to $\cC$ is the identity tensor functor. That is, $\Phi(b)=b$, $\Phi(f)=f$, $J^\Phi_{b,b'}=\id$ and $\iota^\Phi=\id_\one$ for all objects $b,b'$ and morphisms $f$ of $\cC$. The local-vertex condition~\eqref{eq:vertex} with $\varphi=\id_\psi$ then holds automatically, since $e$ is a morphism of $\cC$ and hence $\Phi(e)\circ J^\Phi_{\psi,\psi}=e=e\circ(\id_\psi\otimes\id_\psi)$.
\end{definition}

\begin{lemma}\label{lem:normalization}
Every class in $\ker r$ is represented by a normalized pair. If $(\Phi,\id_\psi)$ is normalized, then $\Phi(\cV)\subseteq\cV$, and $\Phi|_\cV$ with the module structure
\begin{equation*}
	J^\Phi_{b,X}\colon b\otimes\Phi(X)\to\Phi(b\otimes X)\qquad(b\in\cC,\ X\in\cV),
\end{equation*}
is a left $\cC$-module autoequivalence of $\cV$.
\end{lemma}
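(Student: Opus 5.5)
Given a class in $\ker r$, choose a representative $(F,\varphi)$ with $F$ unit-normalized. By Definition~\ref{def:restriction} there is a monoidal natural isomorphism $\eta\colon F|_\cC\Rightarrow\Id_\cC$ with $\eta_\psi=\varphi$. By Lemma~\ref{lem:grading}, $F$ preserves $\cC$ and $\cV$, since $F(\psi)\cong\psi$ via $\varphi$. Define a functor $\Phi$ on homogeneous objects (Convention~\ref{conv:graded}) by $\Phi(b)=b$ and $\Phi(f)=f$ on $\cC$, and $\Phi|_\cV=F|_\cV$. Let $\Theta\colon F\Rightarrow\Phi$ be $\eta$ on $\cC$ and the identity on $\cV$. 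Naturality of $\Theta$ needs checking only on homogeneous morphisms, because there are no morphisms between $\cC$ and $\cV$. On $\cC$ it is the naturality of $\eta$, and on $\cV$ it is trivial.

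Next, transport the tensor structure along $\Theta$ using Lemma~\ref{lem:transport}. This makes $\Phi$ a braided tensor functor and $\Theta$ monoidal. Since $\Theta$ is an isomorphism, $\Phi$ is again an autoequivalence. For $b,b'\in\cC$ the transported structure is $J^\Phi_{b,b'}=\eta_{b\otimes b'}J^F_{b,b'}(\eta_b^{-1}\otimes\eta_{b'}^{-1})$, and this equals $J^{\Id}_{b,b'}=\id$ because $\eta$ is monoidal into $\Id_\cC$. Likewise $\iota^\Phi=\eta_\one\iota^F=\id_\one$. Hence $\Phi|_\cC$ is literally the identity tensor functor. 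Remark~\ref{rem:LR-meaning} then shows that $\Theta$ is a morphism of pairs $(F,\varphi)\to(\Phi,\varphi\circ\Theta_\psi^{-1})=(\Phi,\id_\psi)$. So $(\Phi,\id_\psi)$ is a normalized pair in the same class, which proves the first assertion.

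For the second assertion, let $(\Phi,\id_\psi)$ be normalized. Since $\Phi(\psi)=\psi$, Lemma~\ref{lem:grading} gives $\Phi(\cV)\subseteq\cV$. For $b\in\cC$ and $X\in\cV$ we have $b\otimes\Phi(X)=\Phi(b)\otimes\Phi(X)$, so $s_{b,X}:=J^\Phi_{b,X}$ is a natural isomorphism $b\otimes\Phi(X)\to\Phi(b\otimes X)$. The module associativity axiom~\eqref{eq:module-assoc} is the monoidal axiom~\eqref{eq:monoidal-axiom} at $(b,b',X)$, once we substitute $J^\Phi_{b,b'}=\id$ and $\Phi(a_{b,b',X})$ unchanged. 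The unit axiom~\eqref{eq:module-unit} is the first unit condition in~\eqref{eq:functor-units} with $\iota^\Phi=\id$.

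It remains to show that $\Phi|_\cV$ is an equivalence. It is fully faithful because $\Phi$ is. It is essentially surjective onto $\cV$ because every $Y\in\cV$ is isomorphic to some $\Phi(Z)$, and grading preservation forces $Z\in\cV$: if $Z\in\cC$, then $\Phi(Z)\in\cC$, which would force $Y=0$. A fully faithful, essentially surjective module functor is a module equivalence, since its quasi-inverse inherits a module structure. The only delicate point in the whole argument is this bookkeeping: confirming that the transported $J^\Phi$ on $\cC\times\cC$ is exactly the identity, including its unit components, which hold by~\eqref{eq:normalized-unit-tensorators} with $\Phi|_\cC=\Id_\cC$.
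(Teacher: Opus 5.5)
Your proposal is correct and follows essentially the same route as the paper: you transport along $\Theta=\eta\oplus\id_{F|_\cV}$, check that $J^\Phi_{b,b'}=\id$ and $\iota^\Phi=\id_\one$, and read the monoidal axiom at $(b,b',X)$ and the left unit condition as the module axioms. The only difference is that you verify directly that $\Phi|_\cV$ is an equivalence (full faithfulness plus essential surjectivity from grading preservation), whereas the paper cites Davydov--Nikshych. In your version, a preimage $Z$ need not be homogeneous, so you should take its $\cV$-component rather than argue by the dichotomy $Z\in\cC$ or $Z\in\cV$.
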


\begin{proof}
Let $[F,\varphi]\in\ker r$. We construct a normalized pair $(\Phi,\id_\psi)$ with $[\Phi,\id_\psi]=[F,\varphi]$. By Definition~\ref{def:restriction}, there is a monoidal natural isomorphism $\eta\colon F|_\cC\Rightarrow\Id_\cC$ with $\eta_\psi=\varphi$. We define a functor $\Phi\colon\cE\to\cE$ and a natural isomorphism $\Theta\colon F\Rightarrow\Phi$ on homogeneous objects (Convention~\ref{conv:graded}) by
\begin{equation*}
	\Phi(b)=b,\quad\Theta_b=\eta_b\quad(b\in\cC),\qquad\Phi(X)=F(X),\quad\Theta_X=\id_{F(X)}\quad(X\in\cV),
\end{equation*}
and on a morphism $f\colon U\to U'$ between homogeneous objects $U,U'$ of $\cE$ by $\Phi(f)=\Theta_{U'}\,F(f)\,\Theta_U^{-1}$, so that $\Theta$ is natural by construction. Here $\Phi(f)$ is the unique morphism with $\Theta_{U'}\,F(f)=\Phi(f)\,\Theta_U$, by~\cite[Lem.~1.5.10]{Riehl:2016} applied to $F(f)$ and the isomorphisms $\Theta_U$ and $\Theta_{U'}$. By this uniqueness, $\Phi(gf)=\Phi(g)\Phi(f)$ for $g\colon U'\to U''$ and $\Phi(\id_U)=\id_{\Phi(U)}$, so $\Phi$ is a functor. By~\eqref{eq:transport}, $\Phi$ has a unique braided tensor structure $J^\Phi$ for which $\Theta$ is monoidal, namely the one for which the square
\begin{equation*}
\begin{tikzpicture}[baseline=(current bounding box.center)]
\node (tl) at (0,0) {$F(X)\otimes F(Y)$};
\node (tr) at (4.4,0) {$F(X\otimes Y)$};
\node (bl) at (0,-1.6) {$\Phi(X)\otimes\Phi(Y)$};
\node (br) at (4.4,-1.6) {$\Phi(X\otimes Y)$};
\draw[->] (tl) -- node[above] {$J^F_{X,Y}$} (tr);
\draw[->] (bl) -- node[below] {$J^\Phi_{X,Y}$} (br);
\draw[->] (tl) -- node[left] {$\Theta_X\otimes\Theta_Y$} (bl);
\draw[->] (tr) -- node[right] {$\Theta_{X\otimes Y}$} (br);
\end{tikzpicture}
\end{equation*}
commutes for all $X,Y\in\cE$. Since $\Theta$ is an isomorphism and $F$ is an autoequivalence, so is $\Phi$. For $b,b'\in\cC$ the vertical arrows are $\eta_b\otimes\eta_{b'}$ and $\eta_{b\otimes b'}$, and by~\eqref{eq:monoidal-nat} for $\eta$ the square also commutes with $\id_{b\otimes b'}$ in place of $J^\Phi_{b,b'}$. Hence $J^\Phi_{b,b'}=\id$. Moreover, $\iota^\Phi=\Theta_\one\iota^F=\eta_\one\iota^F=\id_\one$ by~\eqref{eq:transport} and the unit condition in~\eqref{eq:monoidal-nat}. For a morphism $f\colon b\to b'$ of $\cC$, $\Phi(f)=\eta_{b'}\,F(f)\,\eta_b^{-1}=f$ by naturality of $\eta$. Thus $\Phi|_\cC=\Id_\cC$ as a tensor functor, and $(\Phi,\id_\psi)$ is normalized. Since $\id_\psi\circ\Theta_\psi=\varphi$, the isomorphism $\Theta$ is a morphism of pairs $(F,\varphi)\to(\Phi,\id_\psi)$, so $[F,\varphi]=[\Phi,\id_\psi]$.

Now let $(\Phi,\id_\psi)$ be normalized. By Lemma~\ref{lem:grading}, $\Phi(\cV)\subseteq\cV$. Hence $\Phi$ is an equivalence of braided $\Z/2$-extensions of $\cC$ in the sense of~\cite[Defs.~8.2, 8.16]{Davydov-Nikshych:2021}, and by~\cite[\S8.1, Eq.~(8.5)]{Davydov-Nikshych:2021} it restricts to a $\cC$-bimodule autoequivalence of $\cV$. In particular $\Phi|_\cV$ is a left $\cC$-module autoequivalence. Its module structure is $s_{b,X}:=J^\Phi_{b,X}$ for $b\in\cC$ and $X\in\cV$. Indeed, since $\Phi(b)=b$ and $J^\Phi_{b,b'}=\id$ for $b,b'\in\cC$, the monoidal axiom~\eqref{eq:monoidal-axiom} for $\Phi$ at $(b,b',X)$ is~\eqref{eq:module-assoc}, while the left unit condition in~\eqref{eq:functor-units} for $\Phi$ at $X$ is~\eqref{eq:module-unit} since $\iota^\Phi=\id_\one$. Hence $\Phi|_\cV$, with the module structure $s$, is a left $\cC$-module autoequivalence of $\cV$.
\end{proof}

\subsection{The module type}\label{subsec:module-type}

By Lemma~\ref{lem:normalization}, every element of $\ker r$ is represented by a normalized pair $(\Phi,\id_\psi)$, so all of its information sits in the vortex sector $\cV$. We first determine $\Phi|_\cV$ as a module functor. By graded Morita theory (Corollary~\ref{cor:morita}), every left $\cC$-module endofunctor of $\cV$ is isomorphic to $R_a=-\otimes a$ for some $a\in\cC$. So $\Phi(X)\cong X\otimes a$ for every vortex $X$, compatibly with fusion with $\cC$. Since $\Phi$ is moreover braided and is the identity on $\cC$, fusing with $a$ must not change the double braiding of a vortex with any $b\in\cC$. So $a$ must be transparent in $\cC$, hence a sum of copies of $\one$ and $\psi$, and $a$ is simple because $\Phi$ is an equivalence. So there are only two module types, namely $\Phi$ either fixes the vortices, or attaches the fermion to them. Thus we have the following statement.

\begin{proposition}\label{prop:module-type}
Let $(\Phi,\id_\psi)$ be normalized. There is a unique $a\in\{\one,\psi\}$ such that $\Phi|_\cV\cong R_a$ as left $\cC$-module functors, where $\Phi|_\cV$ carries the module structure of Lemma~\ref{lem:normalization}.
\end{proposition}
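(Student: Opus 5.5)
By Lemma~\ref{lem:normalization} and Corollary~\ref{cor:morita}(1), $\Phi|_\cV$ is isomorphic as a left $\cC$-module functor to $R_a$ for some object $a\in\cC$. Uniqueness up to isomorphism then follows from Corollary~\ref{cor:morita}(2). So the work is to show $a\cong\one$ or $a\cong\psi$. I will first show that $a$ is simple, then that $a$ is transparent in $\cC$, and conclude from $\Zt(\cC)=\langle\psi\rangle$.

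\emph{Simplicity.} Since $\Phi$ is an autoequivalence preserving $\cV$, the functor $\Phi|_\cV$ is an autoequivalence of the module category $\cV$. Its quasi-inverse is again a module functor and is therefore isomorphic to some $R_{a'}$. By Fact~\ref{fact:ENO}(2), $R_a\circ R_{a'}\cong R_{a'\otimes a}$ or $R_{a\otimes a'}$, depending on conventions, and this is isomorphic to $\Id_\cV\cong R_\one$. Corollary~\ref{cor:morita}(2) then gives $a\otimes a'\cong\one$. Hence $a$ is invertible, and in particular simple.

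\emph{Transparency.} Fix $b\in\cC$ and a simple vortex $X$, and let $\theta\colon\Phi|_\cV\Rightarrow R_a$ be the module isomorphism. Since $\Phi$ is braided and is the identity on $\cC$, the braided-functor condition~\eqref{eq:braided-functor} applied twice gives
\begin{equation*}
J^\Phi_{b,X}\,c_{\Phi(X),b}c_{b,\Phi(X)}=\Phi(c_{X,b}c_{b,X})\,J^\Phi_{b,X}.
\end{equation*}
Here $J^\Phi_{X,b}$ also appears, and I would handle it via the hexagon relation. In particular, the monodromy of $\Phi(X)$ with $b$ is intertwined, through the module structure $J^\Phi_{b,X}$, with $\Phi$ applied to the monodromy of $X$ with $b$. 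Transporting along $\theta$ and the module structure of $R_a$, this says that $M_{b,X\otimes a}$ agrees with $M_{b,X}\otimes\id_a$ under the canonical identifications. The hexagon axioms, in the form of \cite[Lemma~8.22.1]{EGNO:2015}, factor $M_{b,X\otimes a}$ into $M_{b,X}$ and $M_{b,a}$. It follows that the composite $(\id_X\otimes M_{b,a})$-type correction equals the identity on $b\otimes X\otimes a$, and Lemma~\ref{lem:faithful}(1) should cancel $X$ to give $M_{b,a}=\id$.

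\emph{Main obstacle.} This transparency step is the one I expect to be hardest. The difficulty is to use only the mixed components $J^\Phi_{b,X}$, which are controlled by the module isomorphism, and not the uncontrolled $J^\Phi_{X,b}$. I would try first to use $c$ itself: braidedness at $(b,X)$ expresses $J^\Phi_{X,b}$ as $\Phi(c_{b,X})J^\Phi_{b,X}c^{-1}_{\Phi(b),\Phi(X)}$, so the double-braiding identity involves only $J^\Phi_{b,X}$. Once $M_{b,a}=\id$ holds for all $b\in\cC$, we get $a\in\Zt(\cC)=\langle\psi\rangle$. Since $a$ is simple, $a\cong\one$ or $a\cong\psi$, and because $R_a$ depends only on $a$ up to module isomorphism, we may take $a\in\{\one,\psi\}$ literally.
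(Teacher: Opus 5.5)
Your proposal is correct. Its core, the transparency step, is the paper's argument. Composing the braidedness conditions at $(b,X)$ and $(X,b)$ eliminates $J^\Phi_{X,b}$, so the ``main obstacle'' you flag is already resolved by your own displayed identity, and no hexagon is needed at that point. The module condition and naturality of $\theta$ then give \eqref{eq:DXa}. The hexagons write $c_{X\otimes a,b}c_{b,X\otimes a}$ as $(c_{X,b}\otimes\id_a)$ composed with a reassociated $\id_X\otimes c_{a,b}c_{b,a}$ and $(c_{b,X}\otimes\id_a)$. After cancelling the braidings you get $\id_X\otimes c_{a,b}c_{b,a}=\id$ on $X\otimes(b\otimes a)$ (not on $b\otimes X\otimes a$), and Lemma~\ref{lem:faithful}(1) finishes.

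Where you genuinely differ is in how you show $a$ is simple, and in the order of the steps. The paper first proves $a\in\Zt(\cC)$ and writes $a\cong\one^{\oplus m}\oplus\psi^{\oplus n}$. It then uses full faithfulness of $\Phi$ to get $1=\dim\End(X\otimes a)\ge m^2+n^2$. You instead prove that $a$ is invertible before using the braiding at all. You give a quasi-inverse of $\Phi|_\cV$ its induced module structure, write it as $R_{a'}$, and combine $R_aR_{a'}\cong R_{a'\otimes a}\cong\Id_\cV\cong R_\one$ with Corollary~\ref{cor:morita}(2).

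This is valid. The isomorphism $R_aR_{a'}\cong R_{a'\otimes a}$ is just the associator, which is a module isomorphism by the pentagon, so the convention issues in Fact~\ref{fact:ENO}(2) do not arise. Your route also gives a stronger, braiding-free statement: every module autoequivalence of $\cV$ is $R_a$ with $a$ invertible. The paper obtains this only in Remark~\ref{rem:twist-argument}, by a Frobenius--Perron dimension count. In exchange, the paper's ordering uses nothing beyond essential surjectivity and full faithfulness of $a\mapsto R_a$, together with full faithfulness of $\Phi$. It never needs module structures on quasi-inverses or a composition law for the functors $R_a$.
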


\begin{proof}
By Lemma~\ref{lem:normalization} and Corollary~\ref{cor:morita}(1), there are $a\in\cC$
and a module natural isomorphism $t\colon\Phi|_\cV\Rightarrow R_a$. Using the module structure $s^{R_a}_{b,X}=a^{-1}_{b,X,a}$,~\eqref{eq:module-nat} reads
\begin{equation}\label{eq:t-module}
	t_{b\otimes X}\circ J^\Phi_{b,X}=a^{-1}_{b,X,a}(\id_b\otimes t_X)\qquad(b\in\cC,\ X\in\cV).
\end{equation}

We show that $a\in\Zt(\cC)$. For $b\in\cC$ and an object $Y$ put $D_{b,Y}:=c_{Y,b}\circ c_{b,Y}\in\End(b\otimes Y)$. Let $X\in\cV$, so that $b\otimes X\in\cV$.
We compare the monodromy of $b$ with $X\otimes a$ with that of $b$ with $X$ by considering the diagram
\begin{equation*}
\begin{tikzpicture}[baseline=(current bounding box.center)]
\node (a1) at (0,0) {$b\otimes\Phi(X)$};
\node (a2) at (3.4,0) {$\Phi(b\otimes X)$};
\node (a3) at (6.8,0) {$(b\otimes X)\otimes a$};
\node (b1) at (0,-1.6) {$\Phi(X)\otimes b$};
\node (b2) at (3.4,-1.6) {$\Phi(X\otimes b)$};
\node (c1) at (0,-3.2) {$b\otimes\Phi(X)$};
\node (c2) at (3.4,-3.2) {$\Phi(b\otimes X)$};
\node (c3) at (6.8,-3.2) {$(b\otimes X)\otimes a$};
\draw[->] (a1) -- node[above] {$J^\Phi_{b,X}$} (a2);
\draw[->] (a2) -- node[above] {$t_{b\otimes X}$} (a3);
\draw[->] (b1) -- node[above] {$J^\Phi_{X,b}$} (b2);
\draw[->] (c1) -- node[below] {$J^\Phi_{b,X}$} (c2);
\draw[->] (c2) -- node[below] {$t_{b\otimes X}$} (c3);
\draw[->] (a1) -- node[left] {$c_{b,\Phi(X)}$} (b1);
\draw[->] (b1) -- node[left] {$c_{\Phi(X),b}$} (c1);
\draw[->] (a2) -- node[right] {$\Phi(c_{b,X})$} (b2);
\draw[->] (b2) -- node[right] {$\Phi(c_{X,b})$} (c2);
\draw[->] (a3) -- node[right] {$D_{b,X}\otimes\id_a$} (c3);
\end{tikzpicture}
\end{equation*}
Its left and middle columns compose to $D_{b,\Phi(X)}$ and $\Phi(D_{b,X})$. Since $\Phi(b)=b$, the two squares on the left are~\eqref{eq:braided-functor} for $\Phi$ at $(b,X)$ and at $(X,b)$. The rectangle on the right is the naturality of $t$ at $D_{b,X}$, since $R_a(D_{b,X})=D_{b,X}\otimes\id_a$. By~\eqref{eq:t-module}, the top and bottom rows both compose to the isomorphism $T:=a^{-1}_{b,X,a}(\id_b\otimes t_X)\colon b\otimes\Phi(X)\to(b\otimes X)\otimes a$. Since the outer rectangle commutes, $D_{b,X}\otimes\id_a=T\,D_{b,\Phi(X)}\,T^{-1}$. By naturality of the monodromy at $t_X$, we have $(\id_b\otimes t_X)\,D_{b,\Phi(X)}\,(\id_b\otimes t_X)^{-1}=D_{b,X\otimes a}$, and therefore
\begin{equation}\label{eq:DXa}
a^{-1}_{b,X,a}D_{b,X\otimes a}a_{b,X,a}=D_{b,X}\otimes\id_a.
\end{equation}
Using the hexagon axioms we can express the left-hand side as
\begin{equation*}
	(c_{X,b}\otimes\id_a)a^{-1}_{X,b,a}(\id_X\otimes D_{b,a})a_{X,b,a}(c_{b,X}\otimes\id_a)
\end{equation*}
whereas the right-hand side of~\eqref{eq:DXa} is $(c_{X,b}\otimes\id_a)(c_{b,X}\otimes\id_a)$. Cancelling the isomorphisms $c_{X,b}\otimes\id_a$ on the left and $c_{b,X}\otimes\id_a$ on the right, we get $a^{-1}_{X,b,a}(\id_X\otimes D_{b,a})a_{X,b,a}=\id_{(X\otimes b)\otimes a}$, and hence $\id_X\otimes D_{b,a}=\id_{X\otimes(b\otimes a)}$. Choosing $X\neq0$, which is possible since $\cV\neq0$, we get $D_{b,a}=\id_{b\otimes a}$ for all $b\in\cC$ by Lemma~\ref{lem:faithful}(1). Thus $a\in\Zt(\cC)=\langle\psi\rangle$, that is, $a\cong\one^{\oplus m}\oplus\psi^{\oplus n}$ for some $m,n\ge0$.

Let $X\in\cV$ be simple. Since $\Phi$ is fully faithful, we have $\End(X\otimes a)\cong\End(\Phi(X))\cong\End(X)=\CC$, where the first isomorphism is conjugation by $t_X$. On the other hand $X\otimes a\cong X^{\oplus m}\oplus(X\otimes\psi)^{\oplus n}$ is nonzero, and $\dim\End(X\otimes a)\ge m^2+n^2$. Hence $(m,n)\in\{(1,0),(0,1)\}$, that is, $a\cong\one$ or $a\cong\psi$. Finally, uniqueness of $a$ follows from Corollary~\ref{cor:morita}(2).
\end{proof}

\begin{remark}\label{rem:twist-argument}
If $\cE$ is pseudo-unitary, the transparency of $a$ in the proof of Proposition~\ref{prop:module-type} can also be seen using twists, that is, topological spins. First, $a$ is invertible. Indeed, $R_a\cong\Phi|_\cV$ is an autoequivalence of $\cV$, so it permutes the simple objects of $\cV$. Summing over them, we get $\sum_X\FPdim(X)^2=\sum_X\FPdim(X\otimes a)^2=\FPdim(a)^2\sum_X\FPdim(X)^2$, so $\FPdim(a)=1$ and $a$ is invertible~\cite[Exercise~4.5.9]{EGNO:2015}. For a simple object $Y$ of $\cE$, let $m_a(Y)$ be the monodromy scalar of $Y$ with $a$, that is, $c_{a,Y}\circ c_{Y,a}=m_a(Y)\,\id_{Y\otimes a}$.

Since $\cE$ is pseudo-unitary, it has a unique spherical structure with $d_Y=\FPdim(Y)$ for simple $Y$~\cite[Prop.~9.5.1]{EGNO:2015}, and we let $\theta$ be the corresponding ribbon structure~\cite[(8.35), Prop.~8.10.12]{EGNO:2015}. It satisfies the balancing identity~\cite[Def.~8.10.1]{EGNO:2015}
\begin{equation}\label{eq:balancing}
\theta_{U\otimes V}=(\theta_U\otimes\theta_V)\circ c_{V,U}\circ c_{U,V}\qquad(U,V\in\cE).
\end{equation}
For simple $Y$ we write $\theta_Y$ also for the scalar by which $\theta$ acts on $Y$. A braided autoequivalence $F$ of $\cE$ preserves $\theta$. Indeed, it transports the spherical structure to one with the same dimensions $\FPdim(F(Y))=\FPdim(Y)$~\cite[Prop.~4.5.7]{EGNO:2015}, hence to itself by this uniqueness, and it preserves the Drinfeld isomorphism $u$~\cite[(8.30)]{EGNO:2015}, which is built from the braiding and the duality. Hence $\theta_{F(Y)}=\theta_Y$ for simple $Y$.

Let $X$ be a simple vortex. Since $\Phi(X)\cong X\otimes a$, we get $\theta_{X\otimes a}=\theta_X$. Since $a$ is invertible, $X\otimes a$ is simple (Section~\ref{subsec:fermions}), so all terms of the balancing identity at $(U,V)=(X,a)$ are scalar multiples of $\id_{X\otimes a}$, and it reads $\theta_{X\otimes a}=\theta_X\,\theta_a\,m_a(X)$. Hence $m_a(X)=\theta_a^{-1}$ for every simple vortex $X$.

Now let $b\in\cC$ be simple. Since $b\otimes X\neq0$ by Lemma~\ref{lem:faithful}(1), it has a simple summand $Y$, and $Y$ is a vortex because $\cC\otimes\cV\subseteq\cV$. By~\cite[Lemma~8.22.1(i)]{EGNO:2015} and naturality of the braiding, as in the proof of Lemma~\ref{lem:grading}, $m_a(Y)=m_a(b)\,m_a(X)$. Since $X$ and $Y$ are simple vortices, $m_a(X)=m_a(Y)=\theta_a^{-1}$, so $\theta_a^{-1}=m_a(b)\,\theta_a^{-1}$ and $m_a(b)=1$. Thus every simple object of $\cC$ has trivial monodromy with $a$, that is, $a\in\Zt(\cC)$. We note that the proof of Proposition~\ref{prop:module-type} is purely braided and needs no ribbon structure.
\end{remark}

\subsection{Relative rigidity}\label{subsec:rigidity}

By Proposition~\ref{prop:module-type}, a normalized pair $(\Phi,\id_\psi)$ is determined on $\cC$, and on $\cV$ up to isomorphism of module functors. However, its class is not yet determined by these data. The tensor structure of $\Phi$ also has components $J^\Phi_{X,Y}\colon\Phi(X)\otimes\Phi(Y)\to\Phi(X\otimes Y)$ for $X,Y\in\cV$, which describe how $\Phi$ fuses two vortices into an object of $\cC$, and they are not determined directly by the restriction to $\cC$ and the module type $R_a$. We show in the following lemma that these components carry no further information. Namely, if the module type is $R_\one$, which is isomorphic to $\Id_\cV$ as a module functor by the right unitor $r$ (Section~\ref{subsec:morita}), then $\Phi\cong\Id_\cE$ monoidally by an isomorphism that is the identity on $\cC$, hence $[\Phi,\id_\psi]=[\Id_\cE,\id_\psi]$ is the identity of $\AutLR(\cE)$. In the proof, the odd--odd components are reduced to a single scalar $\lambda$, which is removed by rescaling the odd part by a square root of $\lambda$. 

\begin{lemma}[Relative rigidity]\label{lem:rigidity}
Let $\cF=\cA\oplus\cM$ be a braided fusion category with a faithful $\Z/2$-grading with neutral component $\cA$. Let $F\colon\cF\to\cF$ be a braided tensor functor with tensor structure $J$ and the following properties:
\begin{enumerate}
\item $F|_\cA=\Id_\cA$ as a tensor functor;
\item $F(\cM)\subseteq\cM$;
\item $F|_\cM$, with the module structure $J|_{\cA\times\cM}$, is isomorphic to $\Id_\cM$ as a left $\cA$-module functor.
\end{enumerate}
Then there is a monoidal natural isomorphism $\xi\colon F\Rightarrow\Id_\cF$ with $\xi_a=\id_a$ for all $a\in\cA$.
\end{lemma}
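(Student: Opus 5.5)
We plan to prove the lemma in three steps: transport the tensor structure so that $F$ becomes the identity on objects of $\cM$ with trivial mixed components, reduce the odd--odd components to a single scalar, and remove that scalar by rescaling the odd sector.

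\textbf{Step 1: transport.} By hypothesis (3) there is a module natural isomorphism $t\colon F|_\cM\Rightarrow\Id_\cM$, where $\Id_\cM$ carries the module structure $l$-type canonical isomorphisms. Precompose with the right unitor from Section~\ref{subsec:morita} if $t$ is first obtained to $R_\one$. Define $\Theta\colon F\Rightarrow F'$ with $\Theta_b=\id_b$ for $b\in\cA$ and $\Theta_X=t_X$ for $X\in\cM$. Here $F'$ is the functor that is the identity on objects of both sectors and acts on morphisms by conjugation, exactly as in the proof of Lemma~\ref{lem:normalization}. By Lemma~\ref{lem:transport}, $F'$ acquires a braided tensor structure $J'$ making $\Theta$ monoidal. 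It suffices to prove the lemma for $F'$, since $\Theta$ is the identity on $\cA$.

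Now $F'|_\cA=\Id_\cA$ with $J'_{b,b'}=\id$. Since $F'(f)=t f t^{-1}$ and $t$ is natural with respect to the identity functor on $\cM$, we get $F'(f)=f$ on $\cM$ as well, so $F'=\Id_\cF$ as a functor. The module condition~\eqref{eq:module-nat} for $t$ gives $J'_{b,X}=\id$. For the other mixed components $J'_{X,b}$, we plan to use braidedness: \eqref{eq:braided-functor} gives $J'_{X,b}c_{X,b}=c_{X,b}J'_{b,X}$, hence $J'_{X,b}=\id$. The only remaining data are the components $J'_{X,Y}\in\Aut(X\otimes Y)$ for $X,Y\in\cM$.

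\textbf{Step 2: a single scalar.} This is the main obstacle. Apply the monoidal axiom~\eqref{eq:monoidal-axiom} for $J'$, whose values on $\cA$ and on the mixed components are trivial.
\begin{itemize}
\item At $(X,Y,b)$ with $X,Y\in\cM$ and $b\in\cA$, it gives $J'_{X,Y}\otimes\id_b=J'_{X,Y\otimes b}$.
\item At $(b,X,Y)$, it gives $\id_b\otimes J'_{X,Y}=J'_{b\otimes X,Y}$.
\item At $(X,Y,Z)$ with three odd objects, $X\otimes Y\in\cA$ and $Y\otimes Z\in\cA$, so it gives $J'_{X,Y}\otimes\id_Z=\id_X\otimes J'_{Y,Z}$.
\end{itemize}
Combining instances of the third relation at $(X,Y,Z\otimes W)$ and $(Y,Z,W)$ together with the first two yields $\id_{X\otimes Y}\otimes J'_{Z,W}=J'_{X,Y}\otimes\id_{Z\otimes W}$. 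Since $X\otimes Y$ and $Z\otimes W$ are nonzero for nonzero objects, Lemma~\ref{lem:faithful}(2) then gives $J'_{X,Y}=\lambda_{X,Y}\id$ and $J'_{Z,W}=\lambda_{X,Y}\id$ with a common scalar. Hence $J'_{X,Y}=\lambda\,\id_{X\otimes Y}$ for one $\lambda\in\CC^\times$, independent of all vortices. The delicate point is tracking the associators correctly; Convention~\ref{conv:coherence} lets us suppress them. If a direct derivation is awkward, one can instead apply Lemma~\ref{lem:faithful}(2) to the third relation alone, which shows $J'_{X,Y}$ is scalar for simple $X,Y$, and then propagate equality of scalars.

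\textbf{Step 3: rescale.} Choose $\mu$ with $\mu^2=\lambda$. Define $\xi\colon F'\Rightarrow\Id_\cF$ by $\xi_b=\id_b$ on $\cA$ and $\xi_X=\mu^{-1}\id_X$ on $\cM$; this is natural because $F'=\Id$ as a functor. We then check the tensor condition~\eqref{eq:monoidal-nat} sector by sector. For odd--odd pairs, $\xi_{X\otimes Y}J'_{X,Y}=\lambda\,\id$ and $\xi_X\otimes\xi_Y=\mu^{-2}\,\id$, so we adjust the sign convention and take $\xi_X=\mu\,\id_X$, which makes the condition hold. In the mixed and even sectors both sides carry the same scalar. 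The unit condition holds because $\xi_\one=\id$. Composing $\xi$ with $\Theta$ gives the required isomorphism, which is the identity on $\cA$.
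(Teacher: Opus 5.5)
Your proof is correct. Steps 1 and 3 are the paper's: you transport along $\Theta=\id_\cA\oplus t$ to the identity functor, show that $J'_{b,b'}$, $J'_{b,X}$ and (by braidedness) $J'_{X,b}$ are identities, and finally rescale the odd sector by a square root of $\lambda$.

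Step 2 takes a genuinely different route. The paper uses only instances of the monoidal axiom with one even and two odd entries. These are the three relations~\eqref{eq:LBR} at $(a,X,Y)$, $(X,a,Y)$ and $(X,Y,a)$, and the paper uses one instance of each (with $a=X\otimes Y$, $Y\otimes Z$, $Z\otimes W$), assembled in a ladder over the pentagon for four odd objects. You instead use the monoidal axiom at three odd objects. There $J'_{X\otimes Y,Z}$ and $J'_{X,Y\otimes Z}$ are mixed components, hence trivial, so the axiom reads $J'_{X,Y}\otimes\id_Z=\id_X\otimes J'_{Y,Z}$ up to associators. Two such instances, at $(X,Y,Z)$ tensored with $\id_W$ and at $(Y,Z,W)$ tensored with $\id_X$, chain by coherence to $J'_{X,Y}\otimes\id_{Z\otimes W}=\id_{X\otimes Y}\otimes J'_{Z,W}$. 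So neither your first two relations nor the paper's middle relation at $(X,a,Y)$ is needed. Your route is shorter. The paper's choice matches its physical picture of sliding an even line along a vortex junction, \eqref{phy:networks}--\eqref{phy:laughlin-rigidity}, and the Ising computation in Section~\ref{subsec:physics-ising}, which uses~\eqref{eq:LBR} at $(\psi,\sigma,\sigma)$.

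Some points in your write-up need fixing.
\begin{itemize}
\item The instance you label $(X,Y,Z\otimes W)$ is not an instance of the three-odd relation, since $Z\otimes W$ is even. It should be $(X,Y,Z)$.
\item Your fallback of applying Lemma~\ref{lem:faithful}(2) to a single three-odd relation does not work as stated. The morphisms $J'_{X,Y}\otimes\id_Z$ and $\id_X\otimes J'_{Y,Z}$ refer to different splittings of $X\otimes Y\otimes Z$ that overlap in $Y$. It is the chaining over four objects that produces the form $f\otimes\id_B=\id_A\otimes g$ that the lemma requires.
\item The module structure of $\Id_\cM$ is the identity.
\item The braiding relation should read $c_{b,X}J'_{b,X}=J'_{X,b}c_{b,X}$.
\item On morphisms, $F'(f)=t_{X'}F(f)t_X^{-1}$, not $tft^{-1}$.
\end{itemize}
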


\begin{proof}
We choose a module natural isomorphism $\theta\colon F|_\cM\Rightarrow\Id_\cM$, and let $F'$ be the identity functor of $\cF$ with the tensor structure $J'$ transported from $J$ along $\Theta:=\id_\cA\oplus\theta\colon F\Rightarrow F'$ as in~\eqref{eq:transport}. Then $J'$ is braided and $\Theta$ is monoidal. Since $\Theta_a=\id_a$ for $a\in\cA$, it suffices to construct a monoidal natural isomorphism $\gamma\colon F'\Rightarrow\Id_\cF$ with $\gamma_a=\id_a$ for $a\in\cA$, since then $\xi:=\gamma\circ\Theta$ is a monoidal natural isomorphism $F\Rightarrow\Id_\cF$ with $\xi_a=\id_a$.

By~\eqref{eq:transport}, $J'$ is determined by the commutative squares
\begin{equation*}
\begin{tikzpicture}[baseline=(current bounding box.center)]
\node (tl) at (0,0) {$F(U)\otimes F(V)$};
\node (tr) at (4.4,0) {$F(U\otimes V)$};
\node (bl) at (0,-1.6) {$U\otimes V$};
\node (br) at (4.4,-1.6) {$U\otimes V$};
\draw[->] (tl) -- node[above] {$J_{U,V}$} (tr);
\draw[->] (bl) -- node[below] {$J'_{U,V}$} (br);
\draw[->] (tl) -- node[left] {$\Theta_U\otimes\Theta_V$} (bl);
\draw[->] (tr) -- node[right] {$\Theta_{U\otimes V}$} (br);
\end{tikzpicture}
\end{equation*}
for $U,V\in\cF$. For $a,a'\in\cA$, the arrows $J_{a,a'}$, $\Theta_a\otimes\Theta_{a'}$ and $\Theta_{a\otimes a'}$ are identities by (1) and the fact that $\Theta_a=\id_a$. For $a\in\cA$ and $X\in\cM$, the vertical arrows are $\id_a\otimes\theta_X$ and $\theta_{a\otimes X}$, and by~\eqref{eq:module-nat} for $\theta$ the square also commutes with
$\id_{a\otimes X}$ in place of $J'_{a,X}$. Hence
\begin{equation}\label{eq:J-trivial-1}
	J'_{a,a'}=\id,\qquad J'_{a,X}=\id\qquad \forall a,a'\in\cA,\ \forall X\in\cM.
\end{equation}
By~\eqref{eq:braided-functor} for $F'$ at $(a,X)$, $c_{a,X}J'_{a,X}=J'_{X,a}c_{a,X}$, so
\begin{equation}\label{eq:J-trivial-2}
	J'_{X,a}=\id\qquad \forall a\in\cA,\ \forall X\in\cM.
\end{equation}
The remaining components $J'_{X,Y}$ with $X,Y\in\cM$ are automorphisms of $X\otimes Y\in\cA$. By~\eqref{eq:J-trivial-1} and~\eqref{eq:J-trivial-2}, the monoidal axiom~\eqref{eq:monoidal-axiom} for $F'$ at $(a,X,Y)$, $(X,a,Y)$ and $(X,Y,a)$, with $a\in\cA$ and $X,Y\in\cM$, reads
\begin{equation}\label{eq:LBR}
\begin{aligned}
	a_{a,X,Y}J'_{a\otimes X,Y}&=(\id_a\otimes J'_{X,Y})a_{a,X,Y},\\
	a_{X,a,Y}J'_{X\otimes a,Y}&=J'_{X,a\otimes Y}a_{X,a,Y},\\
	J'_{X,Y\otimes a}a_{X,Y,a}&=a_{X,Y,a}(J'_{X,Y}\otimes\id_a).
\end{aligned}
\end{equation}
Let $X,Y,Z,W\in\cM$ be nonzero, and consider the diagram
\begin{equation*}
\begin{tikzpicture}[baseline=(current bounding box.center),every node/.style={font=\small}]
\node (a1) at (0,0) {$((X\otimes Y)\otimes Z)\otimes W$};
\node (b1) at (6,0) {$((X\otimes Y)\otimes Z)\otimes W$};
\node (a2) at (0,-1.9) {$(X\otimes(Y\otimes Z))\otimes W$};
\node (b2) at (6,-1.9) {$(X\otimes(Y\otimes Z))\otimes W$};
\node (a3) at (0,-3.8) {$X\otimes((Y\otimes Z)\otimes W)$};
\node (b3) at (6,-3.8) {$X\otimes((Y\otimes Z)\otimes W)$};
\node (a4) at (0,-5.7) {$X\otimes(Y\otimes(Z\otimes W))$};
\node (b4) at (6,-5.7) {$X\otimes(Y\otimes(Z\otimes W))$};
\node (a5) at (0,-7.6) {$(X\otimes Y)\otimes(Z\otimes W)$};
\node (b5) at (6,-7.6) {$(X\otimes Y)\otimes(Z\otimes W)$};
\draw[->] (a1) -- node[above] {$J'_{(X\otimes Y)\otimes Z,W}$} (b1);
\draw[->] (a2) -- node[above] {$J'_{X\otimes(Y\otimes Z),W}$} (b2);
\draw[->] (a3) -- node[above] {$J'_{X,(Y\otimes Z)\otimes W}$} (b3);
\draw[->] (a4) -- node[above] {$J'_{X,Y\otimes(Z\otimes W)}$} (b4);
\draw[->] (a5) -- node[above] {$J'_{X,Y}\otimes\id_{Z\otimes W}$} (b5);
\draw[->] (a1) -- node[left] {$a_{X,Y,Z}\otimes\id_W$} (a2);
\draw[->] (a2) -- node[left] {$a_{X,Y\otimes Z,W}$} (a3);
\draw[->] (a3) -- node[left] {$\id_X\otimes a_{Y,Z,W}$} (a4);
\draw[->] (a4) -- node[left] {$a^{-1}_{X,Y,Z\otimes W}$} (a5);
\draw[->] (b1) -- node[right] {$a_{X,Y,Z}\otimes\id_W$} (b2);
\draw[->] (b2) -- node[right] {$a_{X,Y\otimes Z,W}$} (b3);
\draw[->] (b3) -- node[right] {$\id_X\otimes a_{Y,Z,W}$} (b4);
\draw[->] (b4) -- node[right] {$a^{-1}_{X,Y,Z\otimes W}$} (b5);
\node[font=\scriptsize] at (3,-0.72) {naturality of $J'$};
\node[font=\scriptsize] at (3,-2.62) {\eqref{eq:LBR} at $(X,Y\otimes Z,W)$};
\node[font=\scriptsize] at (3,-4.52) {naturality of $J'$};
\node[font=\scriptsize] at (3,-6.42) {\eqref{eq:LBR} at $(X,Y,Z\otimes W)$};
\end{tikzpicture}
\end{equation*}
whose cells commute for the reasons indicated in the middle of the corresponding cells. By the pentagon axiom, each vertical composite equals $a_{X\otimes Y,Z,W}$. Hence $(J'_{X,Y}\otimes\id_{Z\otimes W})\,a_{X\otimes Y,Z,W}=a_{X\otimes Y,Z,W}\,J'_{(X\otimes Y)\otimes Z,W}$, and the right-hand side equals $(\id_{X\otimes Y}\otimes J'_{Z,W})\,a_{X\otimes Y,Z,W}$ by~\eqref{eq:LBR} at $(X\otimes Y,Z,W)$. Cancelling $a_{X\otimes Y,Z,W}$, we get
\begin{equation}\label{eq:four-odd}
	\id_{X\otimes Y}\otimes J'_{Z,W}=J'_{X,Y}\otimes\id_{Z\otimes W}\in \End((X\otimes Y)\otimes(Z\otimes W)).
\end{equation}
The objects $X\otimes Y$ and $Z\otimes W$ are nonzero, since $\id_X\otimes\id_Y\neq0$ and $\id_Z\otimes\id_W\neq0$ by Lemma~\ref{lem:faithful}(1). So, by Lemma~\ref{lem:faithful}(2), there is $\lambda\in\CC^\times$ with $J'_{X,Y}=\lambda\,\id$ and $J'_{Z,W}=\lambda\,\id$. Hence $\lambda$ does not depend on the pair, and
\begin{equation}\label{eq:J-scalar}
	J'_{X,Y}=\lambda\,\id_{X\otimes Y}\qquad \forall X,Y\in\cM.
\end{equation}

Choose $t\in\CC^\times$ with $t^2=\lambda$, and define $\gamma\colon F'\Rightarrow\Id_\cF$ by $\gamma_a=\id_a$ for $a\in\cA$ and $\gamma_X=t\,\id_X$ for $X\in\cM$. Since $\Id_\cF$ has trivial tensor structure, the tensor condition in~\eqref{eq:monoidal-nat} for $\gamma$ reads $\gamma_{U\otimes V}J'_{U,V}=\gamma_U\otimes\gamma_V$. Both sides are natural in $U$ and $V$, so it suffices to check it for homogeneous $U$ and $V$. By~\eqref{eq:J-trivial-1},~\eqref{eq:J-trivial-2} and~\eqref{eq:J-scalar}, both sides equal $\id_{U\otimes V}$ if $U,V\in\cA$, both equal $t\,\id_{U\otimes V}$ if exactly one of $U,V$ lies in $\cM$, and they equal $\lambda\,\id_{U\otimes V}$ and $t^2\,\id_{U\otimes V}=\lambda\,\id_{U\otimes V}$ if $U,V\in\cM$. By~\eqref{eq:transport} and (1), $\iota^{F'}=\Theta_\one\iota^F=\id_\one$, so the unit condition reads $\gamma_\one=\id_\one$, which holds since $\one\in\cA$. So $\gamma$ is monoidal, and $\xi:=\gamma\circ\Theta\colon F\Rightarrow\Id_\cF$ is a monoidal natural isomorphism with $\xi_a=\id_a$ for all $a\in\cA$.
\end{proof}

\begin{remark}\label{rem:rigidity-cocycle}
In the proof of Lemma~\ref{lem:rigidity}, the tensor structure $J'$ is trivialized in two steps, which we phrase in terms of the group cohomology of $\Z/2$ with coefficients in $\CC^\times$. For a homogeneous object $U\in\cF$, we write $p(U)\in\{0,1\}$ for its degree, and we add degrees modulo $2$. Since the grading is faithful, every degree is the degree of a nonzero object, and tensor products of nonzero objects are nonzero by Lemma~\ref{lem:faithful}(1).
\begin{enumerate}
\item By~\eqref{eq:J-trivial-1},~\eqref{eq:J-trivial-2} and~\eqref{eq:J-scalar}, we have $J'_{U,V}=\omega(p(U),p(V))\,\id_{U\otimes V}$ for homogeneous $U,V\in\cF$, where $\omega(g,h):=\lambda^{gh}$ for $g,h\in\{0,1\}$. A priori, the components $J'_{X,Y}$ with $X,Y\in\cM$ are arbitrary automorphisms of $X\otimes Y$, and the essential input is that, by~\eqref{eq:four-odd} and Lemma~\ref{lem:faithful}(2), they are all equal to $\lambda\,\id_{X\otimes Y}$ for a single $\lambda\in\CC^\times$. Let $U,V,W\in\cF$ be nonzero and homogeneous of degrees $g,h,k$. Since $F'$ is the identity functor, the monoidal axiom~\eqref{eq:monoidal-axiom} for $F'$ at $(U,V,W)$ reads
\begin{equation*}
	a_{U,V,W}\,J'_{U\otimes V,W}\,(J'_{U,V}\otimes\id_W)=J'_{U,V\otimes W}\,(\id_U\otimes J'_{V,W})\,a_{U,V,W}.
\end{equation*}
The objects $U\otimes V$ and $V\otimes W$ are homogeneous of degrees $g+h$ and $h+k$, so the left-hand side is $\omega(g+h,k)\,\omega(g,h)\,a_{U,V,W}$ and the right-hand side is $\omega(g,h+k)\,\omega(h,k)\,a_{U,V,W}$. Cancelling $a_{U,V,W}$ and using $(U\otimes V)\otimes W\neq0$, we get
\begin{equation*}
	\omega(h,k)\,\omega(g,h+k)=\omega(g+h,k)\,\omega(g,h).
\end{equation*}
By faithfulness of the grading, this identity holds for all $g,h,k\in\Z/2$, which is the $2$-cocycle condition for $\omega$. Since $\omega$ is normalized, that is, $\omega(0,h)=\omega(g,0)=1$, the condition is automatic unless $g=h=k=1$, where it reads $\lambda=\lambda$. So $J'$ is given by the normalized $2$-cocycle $\omega$ on $\Z/2$.
\item For $\tau\colon\Z/2\to\CC^\times$, let $\gamma\colon F'\Rightarrow\Id_\cF$ be the natural isomorphism with $\gamma_U=\tau(p(U))\,\id_U$ for homogeneous $U$. For homogeneous $U,V$ of degrees $g,h$, the tensor condition $\gamma_{U\otimes V}J'_{U,V}=\gamma_U\otimes\gamma_V$ in~\eqref{eq:monoidal-nat} reads $\tau(g+h)\,\omega(g,h)\,\id_{U\otimes V}=\tau(g)\,\tau(h)\,\id_{U\otimes V}$. Hence it holds for all $U,V$ if and only if $\omega=\delta\tau$, where $(\delta\tau)(g,h)=\tau(g)\,\tau(h)\,\tau(g+h)^{-1}$. Since $\iota^{F'}=\id_\one$, the unit condition reads $\tau(0)=1$, and it follows from $\omega=\delta\tau$ because $\omega(0,0)=1$ and $(\delta\tau)(0,0)=\tau(0)$. So $\gamma$ is monoidal if and only if $\omega=\delta\tau$. For $\tau(0)=1$ and $\tau(1)=t$, we have $(\delta\tau)(g,h)=t^{2gh}$, so $\omega=\delta\tau$ if and only if $t^2=\lambda$. Every normalized $2$-cocycle on $\Z/2$ is of the form $(g,h)\mapsto\lambda^{gh}$ with $\lambda\in\CC^\times$, and $\lambda$ has a square root in $\CC^\times$, so we recover $H^2(\Z/2;\CC^\times)=0$.
\end{enumerate}
\end{remark}

\section{Fermion attachment}\label{sec:attachment}

We keep the notation of Section~\ref{sec:vortex}, where we represented every element of $\ker r$ by a normalized pair $(\Phi,\id_\psi)$ of module type $R_\one$ or $R_\psi$ (Proposition~\ref{prop:module-type}) and showed that $(\Phi,\id_\psi)\cong(\Id_\cE,\id_\psi)$ when the module type is $R_\one$ (Lemma~\ref{lem:rigidity}). In this section, we construct a normalized pair of module type $R_\psi$, namely $(\alpha,\id_\psi)$, where $\alpha$ is fermion attachment. It is the identity on $\cC$ and attaches $\psi$ to every vortex. We show that $(\alpha,\id_\psi)\not\cong(\Id_\cE,\id_\psi)$ and $(\alpha^2,\id_\psi)\cong(\Id_\cE,\id_\psi)$. Composing a normalized pair of module type $R_\psi$ with $(\alpha,\id_\psi)$, we obtain module type $R_\psi R_\psi\cong R_\one\cong\Id_\cV$. Thus, by Lemma~\ref{lem:rigidity}, every normalized pair of module type $R_\psi$ is isomorphic to $(\alpha,\id_\psi)$ (Corollary~\ref{cor:alpha-rigid}).

For super-modular categories, fermion attachment is already known to give an element of order two in $\ker r$~\cite{Aasen:2021vva}, \cite[\S VI.B.2]{Bulmash:2021ryq}. This is the lower bound in the Main Theorem. We give a tensor-categorical form of the construction in~\cite[\S III.A, Eqs.~(111)--(114)]{Aasen:2021vva} (see also~\cite[Eq.~(105)]{Bulmash:2021ryq}), with associators and unitors and in the generality of this paper, including nonunitary categories. We fix an isomorphism $e\colon\psi\otimes\psi\to\one$, and let $\mu_{p,q}$ be as in Lemma~\ref{lem:mu}. For a homogeneous object $X$, we write $p(X)\in\{0,1\}$ for its parity, and add exponents of $\psi$ modulo $2$.

\begin{definition}\label{def:alpha}
On homogeneous objects and morphisms, we define
\begin{equation*}
	\alpha(X):=\begin{cases}X,&p(X)=0,\\
	X\otimes\psi,&p(X)=1,
	\end{cases}\qquad\alpha(f):=\begin{cases}
	f,&p(X)=p(X')=0,\\
	f\otimes\id_\psi,&p(X)=p(X')=1
	\end{cases}
\end{equation*}
for $f\colon X\to X'$, and extend by additivity. We first define a tensor structure on the functor $\widetilde\alpha\colon X\mapsto X\otimes\psi^{p(X)}$, where it is given by a single formula for all parities. Since $\cE$ is not assumed to be strict, $\widetilde\alpha(b)=b\otimes\one$ is only isomorphic to $b$ for $b\in\cC$, so we then transport this structure to $\alpha$, which is the identity on $\cC$. On morphisms, $\widetilde\alpha(f)=f\otimes\id_{\psi^{p(X)}}$, and we put $\widetilde\iota=r_\one^{-1}\colon\one\to\one\otimes\one$. For $p=p(X)$, $q=p(Y)$, we abbreviate $P=\psi^p$ and $Q=\psi^q$. We define
\begin{equation*}
	K_{X,Y}\colon (X\otimes P)\otimes(Y\otimes Q) \longrightarrow (X\otimes Y)\otimes\psi^{p+q}
\end{equation*}
by the following composite, read from right to left:
\begin{equation}\label{eq:K}
\begin{aligned}
	K_{X,Y}:={}&(\id_{X\otimes Y}\otimes\mu_{p,q})\circ a^{-1}_{X,Y,P\otimes Q}\circ(\id_X\otimes a_{Y,P,Q})\\
	&\circ\bigl(\id_X\otimes(c_{P,Y}\otimes\id_Q)\bigr)\circ(\id_X\otimes a^{-1}_{P,Y,Q})\circ a_{X,P,Y\otimes Q}.
\end{aligned}
\end{equation}
Thus $K$ moves the attached $P$ past $Y$ and then multiplies $P$ and $Q$. The natural isomorphism
\begin{equation*}
	\vartheta_X\colon\widetilde\alpha(X)\longrightarrow\alpha(X),\qquad \vartheta_X=\begin{cases}
	r_X,&p(X)=0,\\
	\id_{X\otimes\psi},&p(X)=1,
	\end{cases}
\end{equation*}
transports this structure to
\begin{equation}\label{eq:J-alpha}
	J^\alpha_{X,Y}:=\vartheta_{X\otimes Y}\circ K_{X,Y}\circ(\vartheta_X^{-1}\otimes\vartheta_Y^{-1}),\qquad \iota^\alpha:=\vartheta_\one\circ\widetilde\iota=\id_\one.
\end{equation}
We call $(\alpha,J^\alpha,\iota^\alpha)$ \emph{fermion attachment}. In particular, since $p(\psi)=0$, we have $\alpha(\psi)=\psi$.
\end{definition}

Because $\vartheta_X=r_X$ for $p(X)=0$, $\alpha$ is literally the identity on the neutral component. In particular, for $b,b'\in\cC$ and $X\in\cV$, we obtain from coherence (Convention~\ref{conv:coherence}) and the identity $c_{\one,Y}=r_Y^{-1}l_Y$ for $Y\in\cE$
\begin{equation}\label{eq:attachment-mixed}
	\begin{aligned}
	J^\alpha_{b,b'}&=\id_{b\otimes b'},\\
	J^\alpha_{b,X}&=a^{-1}_{b,X,\psi},\\
	J^\alpha_{X,b}&=a^{-1}_{X,b,\psi}\circ(\id_X\otimes c_{\psi,b})\circ a_{X,\psi,b}.
	\end{aligned}
\end{equation}

We now show that $\alpha$ is a braided tensor autoequivalence of $\cE$, and that $(\alpha,\id_\psi)$ is a normalized pair of module type $R_\psi$, so its class lies in $\ker r$.

\begin{proposition}\label{prop:alpha}
With the tensor structure~\eqref{eq:J-alpha}, $\alpha$ is a braided tensor autoequivalence of $\cE$. Moreover:
\begin{enumerate}
\item $\alpha|_\cC=\Id_\cC$ as a tensor functor;
\item $\alpha(\cV)\subseteq\cV$, and $\alpha|_\cV=R_\psi$ as left $\cC$-module functors, with module structure $J^\alpha_{b,X}=a^{-1}_{b,X,\psi}$;
\item $(\alpha,\id_\psi)$ is a normalized LR autoequivalence, and $[\alpha_\psi]:=[\alpha,\id_\psi]$ lies in $\ker r$.
\end{enumerate}
\end{proposition}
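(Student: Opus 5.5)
The plan is to work with $\widetilde\alpha$ and $K$ rather than $\alpha$ directly. By Lemma~\ref{lem:transport}, $J^\alpha$ is a (braided) tensor structure as soon as $(\widetilde\alpha,K,\widetilde\iota)$ is one. Since $K$ and $\mu$ are natural and $\otimes$ is additive, Convention~\ref{conv:graded} lets us check everything on homogeneous objects.

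For the monoidal axiom~\eqref{eq:monoidal-axiom} at homogeneous $(X,Y,Z)$ of parities $p,q,r$, I would use the string-diagram reading of Convention~\ref{conv:coherence}. Both sides are composites from $(X P)(Y Q)(Z R)$ to $XYZ\,\psi^{p+q+r}$.
\begin{itemize}
\item The left side moves $P$ past $Y$, multiplies $PQ$, then moves $\psi^{p+q}$ past $Z$ and multiplies with $R$.
\item The right side moves $Q$ past $Z$, multiplies $QR$, then moves $P$ past $YZ$ and multiplies.
\end{itemize}
The hexagon axiom gives $c_{P,Y\otimes Z}=(\id_Y\otimes c_{P,Z})(c_{P,Y}\otimes\id_Z)$ and $c_{P\otimes Q,Z}=(c_{P,Z}\otimes\id_Q)(\id_P\otimes c_{Q,Z})$. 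Combined with naturality of the braiding with respect to $\mu_{p,q}$, this reduces both sides to a single normal form: move $P$ past $Y$ and $Z$, move $Q$ past $Z$, then apply the associativity~\eqref{eq:mu-assoc} of $\mu$. The unit conditions follow from $\mu_{0,q}=l$, $\mu_{p,0}=r$ and $\widetilde\iota=r_\one^{-1}$ by coherence.

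For braidedness~\eqref{eq:braided-functor}, we must compare
\[
\widetilde\alpha(c_{X,Y})K_{X,Y}=(c_{X,Y}\otimes\id)K_{X,Y}
\quad\text{with}\quad
K_{Y,X}\,c_{X\otimes P,Y\otimes Q}.
\]
Expand $c_{X\otimes P,Y\otimes Q}$ by the hexagons into the four crossings $c_{X,Y}$, $c_{X,Q}$, $c_{P,Y}$, $c_{P,Q}$. Then $K_{Y,X}$ adds a crossing $c_{Q,X}$ and applies $\mu_{q,p}$. Several cancellations follow:
\begin{itemize}
\item $c_{Q,X}c_{X,Q}$ is the monodromy of $X$ with $\psi^q$, which by Lemma~\ref{lem:grading} equals $(-1)^{pq}$.
\item $\mu_{q,p}c_{P,Q}=(-1)^{pq}\mu_{p,q}$ by~\eqref{eq:mu-comm}.
\item The two signs cancel.
\item The remaining crossings $c_{X,Y}$ and $c_{P,Y}$ then match the left side by naturality.
\end{itemize}
I expect this sign bookkeeping, in a nonstrict category, to be the main obstacle. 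I would handle it by working entirely in the convention where canonical isomorphisms are suppressed and drawing the strands explicitly.

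Next, $\alpha$ is an equivalence. On $\cC$ it is the identity, and on $\cV$ it is $R_\psi$, which has quasi-inverse $R_\psi$ since $\psi\otimes\psi\cong\one$. Hence $\alpha$ is fully faithful and essentially surjective.

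For (1), $\vartheta_b=r_b$ for $b\in\cC$ and $\iota^\alpha=\id_\one$. Then $J^\alpha_{b,b'}=\id$ follows by coherence, as recorded in~\eqref{eq:attachment-mixed}. Since $\alpha(f)=f$ on $\cC$, $\alpha|_\cC=\Id_\cC$ as a tensor functor.

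For (2), $\alpha(\cV)\subseteq\cV$ because $X\otimes\psi$ has parity $1$ by Lemma~\ref{lem:grading}. The formula $J^\alpha_{b,X}=a^{-1}_{b,X,\psi}$ in~\eqref{eq:attachment-mixed} is exactly $s^{R_\psi}$. The module axioms then hold literally, so $\alpha|_\cV=R_\psi$ as module functors.

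For (3), $(\alpha,\id_\psi)$ is normalized by (1), so it satisfies the local-vertex condition by Definition~\ref{def:normalized}. Since $r[\alpha,\id_\psi]=[\Id_\cC,\id_\psi]$ by (1), its class lies in $\ker r$.
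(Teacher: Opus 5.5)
Your proposal is correct and follows the paper's proof step by step. It uses the same hexagon/naturality/\eqref{eq:mu-assoc} reduction for the monoidal axiom, the same cancellation of the two signs $(-1)^{pq}$ from \eqref{eq:parity} and \eqref{eq:mu-comm} for braidedness, transport along $\vartheta$, and the same arguments for (1)--(3). One small imprecision: the unit conditions and the identities \eqref{eq:attachment-mixed} involve braidings with $\one$, which Mac Lane coherence alone does not cover, so ``by coherence'' must be supplemented by $c_{\one,X}=r_X^{-1}l_X$ and $c_{X,\one}=l_X^{-1}r_X$, as the paper does.
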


\begin{proof}
We first check the tensor and braided structures on $\widetilde\alpha$, before transport. Each $K_{X,Y}$ is an isomorphism by~\eqref{eq:K}, since it is a composite of associators, a braiding and $\mu_{p,q}$, which is a unitor or $e$, tensored with identities. Naturality of $K$ follows from naturality of $a$ and $c$ and functoriality of $\otimes$. By Convention~\ref{conv:graded}, it suffices to check homogeneous objects and morphisms. From the triangle axiom and the identities $c_{\one,X}=r_X^{-1}l_X$ and $c_{X,\one}=l_X^{-1}r_X$, we obtain the unit conditions~\eqref{eq:functor-units} for $(\widetilde\alpha,K,\widetilde\iota)$
\begin{align*}
	\widetilde\alpha(l_X)\circ K_{\one,X}\circ(\widetilde\iota\otimes\id_{\widetilde\alpha(X)})&=l_{\widetilde\alpha(X)},\\
	\widetilde\alpha(r_X)\circ K_{X,\one}\circ(\id_{\widetilde\alpha(X)}\otimes\widetilde\iota)
	&=r_{\widetilde\alpha(X)}.
\end{align*}

For homogeneous $X,Y,Z$ of parities $p,q,r$, the monoidal axiom~\eqref{eq:monoidal-axiom} for $(\widetilde\alpha,K)$ at $(X,Y,Z)$ reads
\begin{equation*}
	\widetilde\alpha(a_{X,Y,Z})\circ K_{X\otimes Y,Z}\circ(K_{X,Y}\otimes\id_{\widetilde\alpha(Z)})=K_{X,Y\otimes Z}\circ(\id_{\widetilde\alpha(X)}\otimes K_{Y,Z})\circ a_{\widetilde\alpha(X),\widetilde\alpha(Y),\widetilde\alpha(Z)}.
\end{equation*}
In the following diagrams, we omit $\otimes$ and canonical isomorphisms as in Convention~\ref{conv:coherence}. By~\eqref{eq:K}, the two sides of the monoidal axiom are the outer paths of the diagram
\begin{equation*}
\begin{tikzpicture}[baseline=(current bounding box.center),every node/.style={font=\small}]
\node (A) at (0,0) {$X\psi^pY\psi^qZ\psi^r$};
\node (B) at (4.4,0) {$XY\psi^p\psi^qZ\psi^r$};
\node (C) at (8.8,0) {$XY\psi^{p+q}Z\psi^r$};
\node (D) at (0,-1.7) {$X\psi^pYZ\psi^q\psi^r$};
\node (E) at (4.4,-1.7) {$XY\psi^pZ\psi^q\psi^r$};
\node (F) at (4.4,-3.4) {$XYZ\psi^p\psi^q\psi^r$};
\node (G) at (8.8,-3.4) {$XYZ\psi^{p+q}\psi^r$};
\node (H) at (0,-5.1) {$X\psi^pYZ\psi^{q+r}$};
\node (I) at (4.4,-5.1) {$XYZ\psi^p\psi^{q+r}$};
\node (J) at (8.8,-5.1) {$XYZ\psi^{p+q+r}$};
\draw[->] (A) -- node[above] {$c_{\psi^p,Y}$} (B);
\draw[->] (B) -- node[above] {$\mu_{p,q}$} (C);
\draw[->] (C) -- node[right] {$c_{\psi^{p+q},Z}$} (G);
\draw[->] (G) -- node[right] {$\mu_{p+q,r}$} (J);
\draw[->] (A) -- node[left] {$c_{\psi^q,Z}$} (D);
\draw[->] (D) -- node[left] {$\mu_{q,r}$} (H);
\draw[->] (H) -- node[below] {$c_{\psi^p,YZ}$} (I);
\draw[->] (I) -- node[below] {$\mu_{p,q+r}$} (J);
\draw[->] (D) -- node[above] {$c_{\psi^p,Y}$} (E);
\draw[->] (B) -- node[right] {$c_{\psi^q,Z}$} (E);
\draw[->] (E) -- node[right] {$c_{\psi^p,Z}$} (F);
\draw[->] (F) -- node[above] {$\mu_{p,q}$} (G);
\draw[->] (F) -- node[right] {$\mu_{q,r}$} (I);
\node[font=\scriptsize] at (2.2,-0.8) {functoriality of $\otimes$};
\node[font=\scriptsize,align=center] at (6.85,-1.7) {hexagon,\\ naturality of $c$};
\node[font=\scriptsize,align=center] at (2.0,-3.4) {hexagon,\\ functoriality\\ of $\otimes$};
\node[font=\scriptsize] at (6.6,-4.25) {\eqref{eq:mu-assoc}};
\end{tikzpicture}
\end{equation*}
whose cells commute for the reasons indicated in the middle of the corresponding cells. Hence the monoidal axiom holds.

For braidedness, let $X,Y$ be homogeneous of parities $p,q$. By~\eqref{eq:K}, $\widetilde\alpha(c_{X,Y})\circ K_{X,Y}$ and $K_{Y,X}\circ c_{\widetilde\alpha(X),\widetilde\alpha(Y)}$ are the outer paths of the diagram
\begin{equation*}
\begin{tikzpicture}[baseline=(current bounding box.center),every node/.style={font=\small}]
\node (A) at (0,0) {$X\psi^pY\psi^q$};
\node (B1) at (4.2,0) {$Y\psi^qX\psi^p$};
\node (B2) at (8.4,0) {$YX\psi^q\psi^p$};
\node (C) at (0,-1.8) {$XY\psi^p\psi^q$};
\node (M) at (4.2,-1.8) {$YX\psi^p\psi^q$};
\node (N) at (0,-3.6) {$XY\psi^{p+q}$};
\node (T) at (8.4,-3.6) {$YX\psi^{p+q}$};
\draw[->] (A) -- node[above] {$c_{X\psi^p,Y\psi^q}$} (B1);
\draw[->] (B1) -- node[above] {$c_{\psi^q,X}$} (B2);
\draw[->] (B2) -- node[right] {$\mu_{q,p}$} (T);
\draw[->] (A) -- node[left] {$c_{\psi^p,Y}$} (C);
\draw[->] (C) -- node[left] {$\mu_{p,q}$} (N);
\draw[->] (N) -- node[below] {$c_{X,Y}$} (T);
\draw[->] (C) -- node[above] {$c_{X,Y}$} (M);
\draw[->] (M) -- node[above left] {$c_{\psi^p,\psi^q}$} (B2);
\draw[->] (M) -- node[below left] {$\mu_{p,q}$} (T);
\node[font=\scriptsize,align=center] at (2.1,-0.9) {$(-1)^{pq}$: hexagon,\\ \eqref{eq:parity}};
\node[font=\scriptsize,align=center] at (7.35,-1.8) {$(-1)^{pq}$:\\ \eqref{eq:mu-comm}};
\node[font=\scriptsize] at (3.3,-2.7) {functoriality of $\otimes$};
\end{tikzpicture}
\end{equation*}
The two cells marked $(-1)^{pq}$ commute up to this sign, and the remaining cell commutes, for the reasons indicated in the middle of the corresponding cells. The two signs cancel, so~\eqref{eq:braided-functor} holds for $(\widetilde\alpha,K)$, namely
\begin{equation*}
	\widetilde\alpha(c_{X,Y})\circ K_{X,Y}=K_{Y,X}\circ c_{\widetilde\alpha(X),\widetilde\alpha(Y)}.
\end{equation*}
By transport along $\vartheta$, $\alpha$ becomes a braided tensor functor. Moreover, $\alpha$ is an autoequivalence of $\cE$, since it is the identity on $\cC$ and $R_\psi$ on $\cV$, and $R_\psi$ is an autoequivalence of $\cV$ with quasi-inverse $R_\psi$, because $\psi\otimes\psi\cong\one$.

It remains to prove (1)--(3).

(1) On objects and morphisms of $\cC$, $\alpha$ is the identity. By~\eqref{eq:attachment-mixed}, $J^\alpha_{b,b'}=\id_{b\otimes b'}$ for $b,b'\in\cC$, and $\iota^\alpha=\id_\one$ by~\eqref{eq:J-alpha}.

(2) On objects and morphisms of $\cV$, $\alpha$ is the functor $R_\psi=-\otimes\psi\colon\cV\to\cV$ of Section~\ref{subsec:morita}, so $\alpha(\cV)\subseteq\cV$. By~\eqref{eq:attachment-mixed}, $J^\alpha_{b,X}=a^{-1}_{b,X,\psi}$ for $b\in\cC$ and $X\in\cV$, which is the module structure $s^{R_\psi}_{b,X}$ of $R_\psi$ (Section~\ref{subsec:morita}). Hence $\alpha|_\cV=R_\psi$ as left $\cC$-module functors.

(3) By (1), $(\alpha,\id_\psi)$ is a normalized LR pair of $\cE$ restricting to the LR pair $(\Id_\cC,\id_\psi)$ of $\cC$. Hence $[\alpha_\psi]\in\ker r$ by Definition~\ref{def:restriction}.
\end{proof}

Next we show that there is no morphism of pairs $(\alpha,\id_\psi)\to(\Id_\cE,\id_\psi)$. Such a morphism would be a monoidal natural isomorphism $\alpha\Rightarrow\Id_\cE$ whose component at $\psi$ is $\id_\psi$. We show that every monoidal natural isomorphism $\alpha\Rightarrow\Id_\cE$ has component $-\id_\psi$ at $\psi$ instead.

\begin{proposition}\label{prop:alpha-nontrivial}
Every monoidal natural isomorphism $\eta\colon\alpha\Rightarrow\Id_\cE$ satisfies $\eta_\psi=-\id_\psi$. Consequently $[\alpha_\psi]\neq1$ in $\AutLR(\cE)$.
\end{proposition}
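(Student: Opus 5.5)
The plan is to evaluate the tensor condition of $\eta$ at a mixed pair $(X,\psi)$ with $X$ a vortex, where the tensor structure of $\alpha$ involves the braiding of the attached fermion past $\psi$ and hence produces the sign $c_{\psi,\psi}=-\id$.

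First, I would compute $J^\alpha_{X,\psi}$ for a simple vortex $X$ (nonzero since $\cV\neq0$ by Lemma~\ref{lem:grading}). By \eqref{eq:attachment-mixed} with $b=\psi$, we have $J^\alpha_{X,\psi}=a^{-1}_{X,\psi,\psi}(\id_X\otimes c_{\psi,\psi})a_{X,\psi,\psi}=-\id_{(X\otimes\psi)\otimes\psi}$. Similarly $J^\alpha_{\psi,X}=a^{-1}_{\psi,X,\psi}$. Now write the tensor condition \eqref{eq:monoidal-nat} for $\eta\colon\alpha\Rightarrow\Id_\cE$ at $(\psi,X)$ and at $(X,\psi)$:
\begin{equation*}
\eta_{\psi\otimes X}\,a^{-1}_{\psi,X,\psi}=\eta_\psi\otimes\eta_X,\qquad -\eta_{X\otimes\psi}=\eta_X\otimes\eta_\psi .
\end{equation*}
Here $\eta_X\colon X\otimes\psi\to X$ for vortices, and $\eta_\psi=s\,\id_\psi$ for some $s\in\CC^\times$ since $\alpha(\psi)=\psi$ and $\End(\psi)=\CC$.

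The key step is to relate $\eta_{X\otimes\psi}$ and $\eta_{\psi\otimes X}$ via naturality of $\eta$ at the braiding $c_{X,\psi}\colon X\otimes\psi\to\psi\otimes X$. Naturality gives $\eta_{\psi\otimes X}\,\alpha(c_{X,\psi})=c_{X,\psi}\,\eta_{X\otimes\psi}$, where $\alpha(c_{X,\psi})=c_{X,\psi}\otimes\id_\psi$. Substituting the two tensor conditions yields
\begin{equation*}
(s\,\id_\psi\otimes\eta_X)\,a_{\psi,X,\psi}\,(c_{X,\psi}\otimes\id_\psi)=-\,c_{X,\psi}\,(\eta_X\otimes s\,\id_\psi).
\end{equation*}
Naturality of $c$ in its first variable at $\eta_X$ gives $c_{X,\psi}(\eta_X\otimes\id_\psi)=(\id_\psi\otimes\eta_X)\,c_{X\otimes\psi,\psi}$, and the hexagon expresses $c_{X\otimes\psi,\psi}$ through $c_{X,\psi}$ and $c_{\psi,\psi}=-\id$. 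Thus the right-hand side picks up another sign, and the identity becomes a relation of the form $s\,(\ldots)=-(-1)\,s\,(\ldots)$ with matching morphisms.

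This does not yet pin down $s$, since $s$ cancels. So instead I would use the tensor condition at $(\psi,\psi)$: $\eta_\one=\id_\one$, naturality at $e$, and $J^\alpha_{\psi,\psi}=\id$ (from $\alpha|_\cC=\Id_\cC$) give $s^2=1$. To distinguish $s=\pm1$, I would use the tensor condition at a pair of vortices $(X,Y)$ together with $(X\otimes\psi,\ldots)$: compare $\eta_{X}\otimes\eta_Y$ against $J^\alpha_{X,Y}$ involving $e$ and $c_{\psi,Y}$. Concretely, the cleanest route is to apply the tensor conditions at $(X,\psi)$ and $(X\otimes\psi,\psi)$, then use naturality at $\id_X\otimes e$ and the monoidal axiom. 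Combining $\eta_{(X\otimes\psi)\otimes\psi}$ computed two ways yields $\eta_X\otimes s^2=-\,\ldots$. This is the main obstacle: I need to choose the instance where $s$ appears linearly against a sign. I expect the comparison of the tensor condition at $(X,\psi)$ (which carries $-1$) with the one at $(\psi,X)$ (no sign), linked by naturality at $c_{X,\psi}$ and by the fact that $\eta$ on $\cC$ is a monoidal automorphism of $\Id_\cC$, to force $s=-1$. The consequence is then immediate: a morphism of pairs $(\alpha,\id_\psi)\to(\Id_\cE,\id_\psi)$ would require $\eta_\psi=\id_\psi$, a contradiction.
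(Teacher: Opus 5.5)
\textbf{There is a genuine gap.} Your opening steps are correct: $J^\alpha_{X,\psi}=-\id$, and the tensor condition at $(X,\psi)$ gives $-\eta_{X\otimes\psi}=\eta_X\otimes\eta_\psi$. What the proposal never supplies is a second, independent expression for $\eta_{X\otimes\psi}$ in terms of $\eta_X$. None of the routes you list can supply it.

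\emph{Naturality at $c_{X,\psi}$.} After the hexagon and $c_{\psi,\psi}=-\id$, this holds identically, for every $\eta_X$ and every $s$. The two signs cancel and $s$ appears linearly on both sides, so it carries no information. Your closing expectation that this comparison will force $s=-1$ therefore contradicts your own computation.

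\emph{The condition at $(\psi,\psi)$.} This gives only $s^2=1$.

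\emph{The route through $(X\otimes\psi,\psi)$ and naturality at $\id_X\otimes e$.} This also gives only $s^2=1$. One finds $\eta_{(X\otimes\psi)\otimes\psi}=s^2\,(\eta_X\otimes\id_\psi)\otimes\id_\psi$. Naturality at $r_X(\id_X\otimes e)a_{X,\psi,\psi}$ then reduces to \eqref{eq:e-assoc} multiplied by $s^2$.

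\emph{The fact that $\eta|_\cC$ is a monoidal automorphism of $\Id_\cC$.} This cannot decide the sign. For $\cC=\sVec$, both the identity and the parity automorphism with $\eta_\psi=-\id_\psi$ are monoidal, so the sign has to come from the vortex sector.

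\textbf{The missing idea} is to use naturality of $\eta$ at its own component. Since $\eta_X\colon\alpha(X)=X\otimes\psi\to X$ is a morphism between objects of $\cV$, naturality at $f=\eta_X$ reads $\eta_X\circ\alpha(\eta_X)=\eta_X\circ\eta_{X\otimes\psi}$. Because $\eta_X$ is invertible, this gives $\eta_{X\otimes\psi}=\alpha(\eta_X)=\eta_X\otimes\id_\psi$. Substituting into the $(X,\psi)$ condition gives $\eta_X\otimes\eta_\psi=-\eta_X\otimes\id_\psi$. Precomposing with $\eta_X^{-1}\otimes\id_\psi$ gives $\id_X\otimes(\eta_\psi+\id_\psi)=0$, so Lemma~\ref{lem:faithful}(1) yields $\eta_\psi=-\id_\psi$. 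This is the paper's argument. It needs only the single tensor condition at $(X,\psi)$ together with this self-naturality, and none of the other instances you considered. Your deduction of $[\alpha_\psi]\neq1$ from $\eta_\psi=-\id_\psi$ is fine.
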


\begin{proof}
Let $\eta\colon\alpha\Rightarrow\Id_\cE$ be a monoidal natural isomorphism, and choose a nonzero simple $X\in\cV$. Since $p(X)=1$ and $p(\psi)=0$, we obtain from~\eqref{eq:attachment-mixed}
\begin{equation}\label{eq:J-X-psi}
	J^\alpha_{X,\psi}=a^{-1}_{X,\psi,\psi}\circ(\id_X\otimes c_{\psi,\psi})\circ a_{X,\psi,\psi}=-\id_{(X\otimes\psi)\otimes\psi}.
\end{equation}
Consider the diagram
\begin{equation*}
\begin{tikzpicture}[baseline=(current bounding box.center),every node/.style={font=\small}]
\node (A) at (0,0) {$\alpha(X)\otimes\alpha(\psi)$};
\node (B) at (4.2,0) {$\alpha(X\otimes\psi)$};
\node (C) at (8.4,0) {$\alpha(X)$};
\node (E) at (4.2,-1.8) {$X\otimes\psi$};
\node (F) at (8.4,-1.8) {$X$};
\draw[->] (A) -- node[above] {$J^\alpha_{X,\psi}=-\id$} (B);
\draw[->] (B) -- node[above] {$\alpha(\eta_X)$} (C);
\draw[->] (A) -- node[below left] {$\eta_X\otimes\eta_\psi$} (E);
\draw[->] (B) -- node[right] {$\eta_{X\otimes\psi}$} (E);
\draw[->] (C) -- node[right] {$\eta_X$} (F);

\draw[->] (E) -- node[below] {$\eta_X$} (F);
\node[font=\scriptsize] at (2.95,-0.55) {\eqref{eq:monoidal-nat}};
\node[font=\scriptsize,align=center] at (6.3,-0.9) {naturality\\ of $\eta$};
\end{tikzpicture}
\end{equation*}
Here $\alpha(X)\otimes\alpha(\psi)=\alpha(X\otimes\psi)=(X\otimes\psi)\otimes\psi$ and $\alpha(X)=X\otimes\psi$, since $X$ and $X\otimes\psi$ are odd and $\psi$ is even. The triangle commutes by~\eqref{eq:monoidal-nat} for $\eta$ at $(X,\psi)$, since $\Id_\cE$ has trivial tensor structure, and the square commutes by naturality of $\eta$ at $\eta_X\colon X\otimes\psi\to X$. Moreover, $\alpha(\eta_X)=\eta_X\otimes\id_\psi$ by Definition~\ref{def:alpha}, since $\eta_X$ is a morphism of $\cV$. By the square, since $\eta_X$ is invertible, $\eta_{X\otimes\psi}=\eta_X\otimes\id_\psi$. By the triangle and~\eqref{eq:J-X-psi}, we get $\eta_X\otimes\eta_\psi=-\eta_X\otimes\id_\psi$. Precomposing with $\eta_X^{-1}\otimes\id_\psi$ and using faithfulness of $X\otimes-$ (Lemma~\ref{lem:faithful}(1)), we get $\eta_\psi=-\id_\psi$. So there is no morphism of LR pairs $(\alpha,\id_\psi)\to(\Id_\cE,\id_\psi)$, since its component at $\psi$ would have to be $\id_\psi$ by Definition~\ref{def:LR}.
\end{proof}

We next apply relative rigidity (Lemma~\ref{lem:rigidity}) to $\alpha^2$.

\begin{corollary}\label{cor:alpha-square}
There is a monoidal natural isomorphism $\zeta\colon\alpha\circ\alpha\Rightarrow\Id_\cE$ with $\zeta_b=\id_b$ for all $b\in\cC$. Hence $[\alpha_\psi]^2=1$.
\end{corollary}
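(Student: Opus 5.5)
We apply Lemma~\ref{lem:rigidity} with $\cF=\cE$, $\cA=\cC$, $\cM=\cV$ and $F=\alpha\circ\alpha$, equipped with the composite tensor structure~\eqref{eq:composite}, $J^{\alpha\alpha}_{X,Y}=\alpha(J^\alpha_{X,Y})\,J^\alpha_{\alpha(X),\alpha(Y)}$ and $\iota^{\alpha\alpha}=\alpha(\iota^\alpha)\iota^\alpha=\id_\one$. The composite is braided, since $\alpha$ is braided by Proposition~\ref{prop:alpha}. The grading $\cE=\cC\oplus\cV$ is faithful by Lemma~\ref{lem:grading}. It remains to check hypotheses (1)--(3) of the lemma.

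Hypothesis (1) follows from Proposition~\ref{prop:alpha}(1). Since $\alpha|_\cC=\Id_\cC$ literally as a tensor functor, with $J^\alpha_{b,b'}=\id$, we get $(\alpha\alpha)|_\cC=\Id_\cC$ on objects and morphisms, and $J^{\alpha\alpha}_{b,b'}=\alpha(\id)\,\id=\id$. Hypothesis (2) holds because $\alpha(\cV)\subseteq\cV$.

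For (3), we first compute the module structure of $\alpha\alpha|_\cV$. On $\cV$ we have $\alpha\alpha(X)=(X\otimes\psi)\otimes\psi$. Using~\eqref{eq:attachment-mixed}, the module structure is
\[
J^{\alpha\alpha}_{b,X}=(a^{-1}_{b,X,\psi}\otimes\id_\psi)\,a^{-1}_{b,X\otimes\psi,\psi}.
\]
This is exactly the module structure of the composite module functor $R_\psi\circ R_\psi$, since $\alpha|_\cV=R_\psi$ as module functors. By coherence it is also the module structure of $R_{\psi\otimes\psi}$, transported along the associator $a_{X,\psi,\psi}\colon (X\otimes\psi)\otimes\psi\to X\otimes(\psi\otimes\psi)$.

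Next we show that this module functor is isomorphic to $\Id_\cV$. Compose $a_{X,\psi,\psi}$, then $\id_X\otimes e$, then $r_X$. This gives a natural isomorphism $\theta_X\colon(X\otimes\psi)\otimes\psi\to X$, and we claim $\theta$ is a module natural isomorphism $\alpha\alpha|_\cV\Rightarrow\Id_\cV$. Checking the square~\eqref{eq:module-nat} is a coherence computation: the associators cancel by the pentagon, and $e$ and $r$ commute past $b\otimes-$ by naturality of $a$ together with the triangle axiom. Alternatively, we can invoke the functoriality of $a\mapsto R_a$ from Fact~\ref{fact:ENO}(2) together with $R_\one\cong\Id_\cV$ via $r$. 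In either case the point is that $\psi\otimes\psi\cong\one$ in $\cC$.

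Lemma~\ref{lem:rigidity} then gives a monoidal $\zeta\colon\alpha\alpha\Rightarrow\Id_\cE$ with $\zeta_b=\id_b$ for $b\in\cC$, and in particular $\zeta_\psi=\id_\psi$. The class $[\alpha_\psi]^2$ is represented by $(\alpha\alpha,\id_\psi\circ\alpha(\id_\psi))=(\alpha\alpha,\id_\psi)$, using~\eqref{eq:compose-pairs}. Hence $\zeta$ is a morphism of pairs to $(\Id_\cE,\id_\psi)$, and $[\alpha_\psi]^2=1$.

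The only step needing real care is the module-naturality of $\theta$, which is a diagram chase with associators. We expect no obstacle there, because no braiding enters the mixed components $J^\alpha_{b,X}$.
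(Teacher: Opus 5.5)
Your proposal is correct and follows the paper's proof essentially step for step: the paper also computes $J^{\alpha^2}_{b,X}=(a^{-1}_{b,X,\psi}\otimes\id_\psi)\circ a^{-1}_{b,X\otimes\psi,\psi}$, shows that $\beta_X=r_X\circ(\id_X\otimes e)\circ a_{X,\psi,\psi}$ (your $\theta_X$) is a module natural isomorphism $R_\psi R_\psi\Rightarrow\Id_\cV$, and then applies Lemma~\ref{lem:rigidity} to obtain an isomorphism that is the identity at $\psi$. The diagram chase you leave unexecuted goes through as you anticipate, using the pentagon at $(b,X,\psi,\psi)$, naturality of $a$ at $e$, and coherence for the unitor step.
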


\begin{proof}
By Proposition~\ref{prop:alpha} and~\eqref{eq:composite}, $\alpha^2|_\cC=\Id_\cC$ as a tensor functor and $\alpha^2|_\cV=R_\psi R_\psi$, with module structure
\begin{equation*}
	J^{\alpha^2}_{b,X}=\alpha(J^\alpha_{b,X})\circ J^\alpha_{b,X\otimes\psi}=(a^{-1}_{b,X,\psi}\otimes\id_\psi)\circ a^{-1}_{b,X\otimes\psi,\psi}\qquad(b\in\cC,\ X\in\cV).
\end{equation*}
The morphisms
\begin{equation*}
	\beta_X:=r_X\circ(\id_X\otimes e)\circ a_{X,\psi,\psi}\colon (X\otimes\psi)\otimes\psi\longrightarrow X\qquad(X\in\cV)
\end{equation*}
form a natural isomorphism $R_\psi R_\psi\Rightarrow\Id_\cV$, since $a$ and $r$ are natural and $e$ is invertible. Since $\Id_\cV$ has the identity module structure, condition~\eqref{eq:module-nat} for $\beta$ implies that the two outer paths of the diagram
\begin{equation*}
\begin{tikzpicture}[baseline=(current bounding box.center),every node/.style={font=\small}]
\node (A) at (0,0) {$b\otimes((X\otimes\psi)\otimes\psi)$};
\node (E) at (4.4,0) {$(b\otimes(X\otimes\psi))\otimes\psi$};
\node (F) at (9.4,0) {$((b\otimes X)\otimes\psi)\otimes\psi$};
\node (B) at (0,-1.7) {$b\otimes(X\otimes(\psi\otimes\psi))$};
\node (G) at (9.4,-1.7) {$(b\otimes X)\otimes(\psi\otimes\psi)$};
\node (C) at (0,-3.4) {$b\otimes(X\otimes\one)$};
\node (H) at (9.4,-3.4) {$(b\otimes X)\otimes\one$};
\node (D) at (4.7,-4.9) {$b\otimes X$};
\draw[->] (A) -- node[above] {$a^{-1}_{b,X\otimes\psi,\psi}$} (E);
\draw[->] (E) -- node[above] {$a^{-1}_{b,X,\psi}\otimes\id_\psi$} (F);
\draw[->] (A) -- node[right] {$\id_b\otimes a_{X,\psi,\psi}$} (B);
\draw[->] (F) -- node[left] {$a_{b\otimes X,\psi,\psi}$} (G);
\draw[->] (G) -- node[above] {$a_{b,X,\psi\otimes\psi}$} (B);
\draw[->] (B) -- node[right] {$\id_b\otimes(\id_X\otimes e)$} (C);
\draw[->] (G) -- node[left] {$\id_{b\otimes X}\otimes e$} (H);
\draw[->] (H) -- node[above] {$a_{b,X,\one}$} (C);
\draw[->] (C) -- node[below left] {$\id_b\otimes r_X$} (D);
\draw[->] (H) -- node[below right] {$r_{b\otimes X}$} (D);
\node[font=\scriptsize] at (4.7,-0.85) {pentagon};
\node[font=\scriptsize,align=center] at (4.7,-2.55) {naturality of $a$};
\node[font=\scriptsize] at (4.7,-4.0) {coherence};
\end{tikzpicture}
\end{equation*}
agree. Its cells commute for the reasons indicated in the middle of the corresponding cells. Here the pentagon axiom is applied to $(b,X,\psi,\psi)$, and the bottom cell commutes by Convention~\ref{conv:coherence}, since both of its paths are canonical isomorphisms. Hence $\beta$ is a module natural isomorphism. Thus, by Lemma~\ref{lem:rigidity}, there is a monoidal isomorphism $\alpha^2\Rightarrow\Id_\cE$ equal to the identity on $\cC$. In particular its component at $\psi$ is $\id_\psi$, so it is a morphism of LR pairs, and $[\alpha_\psi]^2=1$.
\end{proof}

Finally we deduce the analogue of Lemma~\ref{lem:rigidity} for module type $R_\psi$.

\begin{corollary}\label{cor:alpha-rigid}
Let $(\Phi,\id_\psi)$ be a normalized pair such that
$\Phi|_\cV\cong R_\psi$ as left $\cC$-module functors. Then there
is a monoidal natural isomorphism $\nu\colon\Phi\Rightarrow\alpha$
with $\nu_b=\id_b$ for all $b\in\cC$. In particular, $(\Phi,\id_\psi)\cong(\alpha,\id_\psi)$.
\end{corollary}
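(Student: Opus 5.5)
The plan is to reduce to the case $R_\one$ handled by relative rigidity, by composing $\Phi$ with $\alpha$. Consider the composite tensor functor $\alpha\circ\Phi$, with tensor structure given by~\eqref{eq:composite}. Since $\Phi|_\cC=\Id_\cC$ and $\alpha|_\cC=\Id_\cC$ as tensor functors (Proposition~\ref{prop:alpha}(1)), the composite restricts to the identity tensor functor on $\cC$. Both functors preserve $\cV$, so $\alpha\Phi(\cV)\subseteq\cV$. The composite is braided, being a composite of braided autoequivalences.

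Next we identify the module type of $\alpha\Phi|_\cV$. Its module structure is $\alpha(J^\Phi_{b,X})\circ J^\alpha_{b,\Phi(X)}$. This is exactly the composite of the left $\cC$-module functors $\alpha|_\cV=R_\psi$ (Proposition~\ref{prop:alpha}(2)) and $\Phi|_\cV$. Hence a module isomorphism $t\colon\Phi|_\cV\Rightarrow R_\psi$ yields a module isomorphism $\alpha\Phi|_\cV\Rightarrow R_\psi R_\psi$ with components $\alpha(t_X)=t_X\otimes\id_\psi$. We then compose with the module isomorphism $\beta\colon R_\psi R_\psi\Rightarrow\Id_\cV$ already checked in the proof of Corollary~\ref{cor:alpha-square}. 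The one point to verify is that whiskering a module natural transformation by the module functor $R_\psi$ is again a module transformation. This is naturality of $a^{-1}_{b,-,\psi}$, and is routine. Lemma~\ref{lem:rigidity} then gives a monoidal $\xi\colon\alpha\Phi\Rightarrow\Id_\cE$ with $\xi_b=\id_b$ on $\cC$.

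Finally we cancel $\alpha$. By Corollary~\ref{cor:alpha-square} there is a monoidal $\zeta\colon\alpha\alpha\Rightarrow\Id_\cE$ that is the identity on $\cC$. Define
\[
\nu\colon \Phi\xrightarrow{\ \zeta^{-1}\Phi\ }\alpha\alpha\Phi\xrightarrow{\ \alpha\xi\ }\alpha .
\]
Whiskerings of monoidal transformations by tensor functors are monoidal, so $\nu$ is a monoidal natural isomorphism. On $b\in\cC$ we have $\Phi(b)=b$, so $\zeta^{-1}_{\Phi(b)}=\id_b$. Also $\alpha(\xi_b)=\xi_b=\id_b$ because $\alpha|_\cC$ is the identity. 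Hence $\nu_b=\id_b$, in particular $\nu_\psi=\id_\psi$, and $\nu$ is a morphism of pairs $(\Phi,\id_\psi)\to(\alpha,\id_\psi)$.

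The main obstacle is the bookkeeping in the second step. We must check that the module structure of the composite tensor functor coincides with the composite module structure, and that the composite isomorphism $\beta\circ\alpha(t)$ satisfies~\eqref{eq:module-nat}, including the associators hidden in $J^\alpha_{b,X}=a^{-1}_{b,X,\psi}$. I would handle this with a single pasting diagram: the square~\eqref{eq:t-module} tensored with $\id_\psi$, followed by naturality of $a$.
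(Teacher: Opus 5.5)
Your proposal is correct and follows the paper's proof essentially step for step. You compose with $\alpha$, use $\beta\circ\alpha(t)$ to reach module type $\Id_\cV$, apply Lemma~\ref{lem:rigidity} to obtain $\xi$, and cancel $\alpha$ via $\nu=(\id_\alpha*\xi)\circ(\zeta*\id_\Phi)^{-1}$. The bookkeeping you flag is what the paper handles by applying~\eqref{eq:module-nat} for $t$ together with naturality of $J^\alpha$.
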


\begin{proof}
The composite pair $(\alpha\Phi,\id_\psi)$ is normalized. We choose a module natural isomorphism $t\colon\Phi|_\cV\Rightarrow R_\psi$. Then $\alpha(t)$, with components $\alpha(t_X)=t_X\otimes\id_\psi$, is a module natural isomorphism $(\alpha\Phi)|_\cV\Rightarrow R_\psi R_\psi$ for the composite module structures by~\eqref{eq:module-nat} for $t$ and naturality of $J^\alpha$. Composing with the module natural isomorphism $\beta\colon R_\psi R_\psi\Rightarrow\Id_\cV$ from the proof of Corollary~\ref{cor:alpha-square}, we obtain a module natural isomorphism $\beta\circ\alpha(t)\colon(\alpha\Phi)|_\cV\Rightarrow\Id_\cV$. Consequently, by Lemma~\ref{lem:rigidity}, there is a monoidal natural isomorphism $\xi\colon\alpha\Phi\Rightarrow\Id_\cE$ with $\xi_b=\id_b$ on $\cC$. We choose $\zeta$ as in Corollary~\ref{cor:alpha-square}, and put
\begin{equation*}
	\nu_X:=\alpha(\xi_X)\circ\zeta_{\Phi(X)}^{-1}\colon\Phi(X)\longrightarrow\alpha\alpha\Phi(X)\longrightarrow\alpha(X).
\end{equation*}
This is the composite of the monoidal natural isomorphisms $(\zeta\ast\id_\Phi)^{-1}$ and $\id_\alpha\ast\xi$, hence is monoidal. For $b\in\cC$, we have $\nu_b=\alpha(\xi_b)\zeta_b^{-1}=\id_b$ by normalization. In particular $\nu_\psi=\id_\psi$, as required for an LR morphism.
\end{proof}

In particular, the odd--odd tensorators of a normalized pair $(\Phi,\id_\psi)$ of module type $R_\psi$ carry no freedom beyond those of $\alpha$. Indeed, let $\nu$ be as in Corollary~\ref{cor:alpha-rigid}. For $X,Y\in\cV$ we have $X\otimes Y\in\cC$, so $\nu_{X\otimes Y}=\id$, and~\eqref{eq:monoidal-nat} for $\nu$ at $(X,Y)$ reads
\begin{equation*}
	J^\Phi_{X,Y}=J^\alpha_{X,Y}\circ(\nu_X\otimes\nu_Y)\qquad(X,Y\in\cV).
\end{equation*}

Moreover, $[\alpha_\psi]$ does not depend on the choice of $e$. Indeed, for another isomorphism $e'\colon\psi\otimes\psi\to\one$, the pair constructed from $e'$ is normalized and its restriction to $\cV$ is $R_\psi$ as a left $\cC$-module functor, by Proposition~\ref{prop:alpha}. Hence it is isomorphic to $(\alpha,\id_\psi)$ by Corollary~\ref{cor:alpha-rigid}.


\section{Proof of the Main Theorem}\label{sec:proof}

We first prove the surjectivity of $r$.
\begin{proposition}\label{prop:surjectivity}
Let $\cC$ be a slightly degenerate braided fusion category over $\CC$, and let $\cC\subset\cE$ be a fixed minimal nondegenerate extension. Every $\CC$-linear braided tensor autoequivalence $F$ of $\cC$ admits a braided tensor autoequivalence $H$ of $\cE$ and a monoidal natural isomorphism $H|_{\cC}\Rightarrow F$. Consequently,
\begin{equation*}
	r\colon\AutLR(\cE)\longrightarrow\AutLR(\cC)
\end{equation*}
is surjective. The same holds for $\AutLRu$ when the extension is unitary.
\end{proposition}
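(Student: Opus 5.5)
We follow the strategy announced in Section~\ref{subsec:intro-idea}: realize braided autoequivalences as Lagrangian algebras in $\cE^{\mathrm{rev}}\boxtimes\cE$ and enlarge the graph algebra of $F$. Let $L_F:=\bigoplus_{b\in\Irr(\cC)} F(b)^*\boxtimes b$, suitably arranged, be the graph algebra of $F$ in $\cC^{\mathrm{rev}}\boxtimes\cC\subset\cE^{\mathrm{rev}}\boxtimes\cE$. It is a connected étale algebra of dimension $\FPdim(\cC)$, being the image of the canonical algebra of $\cZ(\cC)$-type objects, equivalently the full centre of the invertible $\cC$-bimodule $\cC_F$. Its category of local modules $\cD:=(\cE^{\mathrm{rev}}\boxtimes\cE)^{\mathrm{loc}}_{L_F}$ is nondegenerate of dimension $\FPdim(\cE)^2/\FPdim(\cC)^2=4$. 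It contains the image of $\psi\boxtimes\one$ (identified with $\one\boxtimes\psi$ modulo $L_F$, since $F(\psi)\cong\psi$), which is a fermion there. Moreover, $\cD$ is Witt equivalent to $\cE^{\mathrm{rev}}\boxtimes\cE$, which is Witt trivial. A Witt-trivial nondegenerate category of dimension $4$ containing a fermion is pointed with group $\Z/2\times\Z/2$ and metric group of trivial Witt class, hence the toric code $\cZ(\Vect_{\Z/2})$. It therefore contains two bosons $e,m$ with $e\otimes m\cong\psi$.

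Condensing a boson, say the one that restricts appropriately, yields $\cD^{\mathrm{loc}}_{\one\oplus e}\simeq\Vect$. By the standard correspondence between étale algebras over $L_F$ and étale algebras in $\cD$, this gives a Lagrangian algebra $L\supset L_F$ in $\cE^{\mathrm{rev}}\boxtimes\cE$ of dimension $\FPdim(\cE)$. Lagrangian algebras in $\cE^{\mathrm{rev}}\boxtimes\cE$ correspond to indecomposable $\cE$-module categories, and such an algebra is the graph of a braided autoequivalence $H$ exactly when $L\cap(\cE^{\mathrm{rev}}\boxtimes\one)=\one=L\cap(\one\boxtimes\cE)$. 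This condition is part of the classification of connected étale algebras in a Deligne product (Davydov--Nikshych--Ostrik), where the algebra is determined by two subalgebras and a braided equivalence between their local module categories.

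The main obstacle is this identification step: verifying that $L$ has trivial intersection with both factors, and that the resulting $H$ restricts to $F$ on $\cC$. For the first point, I would use the parity grading of Lemma~\ref{lem:grading}. The summands of $L$ beyond $L_F$ are of the form $X'\boxtimes X$ with $X,X'\in\cV$, because the added boson is a vortex--vortex object: one of $e,m$ lies in $\cV^{\mathrm{rev}}\boxtimes\cV$, and we choose that one. Any summand in $\cE^{\mathrm{rev}}\boxtimes\one$ is then already in $L_F\cap(\cC\boxtimes\one)=\one$. For the second point, the equality $L\cap(\cC^{\mathrm{rev}}\boxtimes\cC)=L_F$ means that the graph of $H$ on $\cC$ coincides with the graph of $F$. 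Graphs determine braided autoequivalences up to monoidal isomorphism, so this gives $H|_\cC\cong F$ monoidally. I would also check that choosing the other boson produces either an algebra meeting $\one\boxtimes\cE$ nontrivially, or else the fermion-attached lift; either outcome is consistent with the statement.

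For surjectivity of $r$, take $[F,\varphi]\in\AutLR(\cC)$ and the lift $H$ with $\eta\colon H|_\cC\Rightarrow F$. Then $H(\psi)\cong\psi$, and by Remark~\ref{rem:LR-meaning} the pair $(H,\varphi\eta_\psi)$ is LR in $\cE$ and restricts to $[F,\varphi]$. For the unitary statement, combine this with Lemma~\ref{lem:unitary}.
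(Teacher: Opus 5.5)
Your outline is the paper's proof: the graph algebra $L_F$, a four-dimensional Witt-trivial local-module category containing a fermion and hence equal to the toric code, condensation of a boson to a Lagrangian $L\supset L_F$, the parity grading, and the Davydov--Nikshych--Ostrik classification. The genuine gap is at the step you yourself call the main obstacle. There you only \emph{assert} that one of $e,m$ lies in $\cV^{\mathrm{rev}}\boxtimes\cV$, and your hedge about the other boson suggests you do not have the reason.

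The missing argument uses only what you already have:
\begin{itemize}
\item The objects $g_1=\psi\boxtimes\one$ and $g_2=\one\boxtimes\psi$ centralize $L_F$ because $\psi$ is transparent in $\cC$. So the free modules $h_i=g_i\otimes L_F$ are local, and they are nontrivial since $g_i$ is not a summand of $L_F$.
\item The free-module functor on the centralizer of $L_F$ is braided. Hence the monodromy of $h_i$ with a local module $M$ is computed by the monodromy of $g_i$ with the underlying object $U(M)$ in $\cE^{\mathrm{rev}}\boxtimes\cE$.
\item You already know $h_1\cong h_2$ is the fermion $\varepsilon$ of the toric code, and $\varepsilon$ has monodromy $-1$ with \emph{both} $e$ and $m$.
\end{itemize}
So $g_1$ and $g_2$ both have monodromy $-1$ with every simple summand of $U(e)$ and of $U(m)$. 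Thus both $U(e)$ and $U(m)$ lie in $\cV^{\mathrm{rev}}\boxtimes\cV$, and either boson works. The alternative ``$L$ meets $\one\boxtimes\cE$ nontrivially'' never occurs, and $L=L_F\oplus U(b)$ is concentrated in degrees $(0,0)$ and $(1,1)$, which is exactly what your intersection argument needs.

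Two smaller steps also need filling.

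First, before you can read $L_{0,0}=L_F$ as ``the graph of $H$ on $\cC$'', you need $H(\cC)=\cC$. A priori the degree-$(0,0)$ part of $A_H\cong L$ only collects the summands $X^*\boxtimes H(X)$ with $X$ and $H(X)$ both in $\cC$. There are two ways to get it:
\begin{itemize}
\item The paper compares total dimension with $\FPdim(L_F)=\sum_{X\in\Irr(\cC)}\FPdim(X)^2$.
\item Alternatively, the unique summand of $L$ with first factor $\psi$ must lie in degree $(0,0)$, hence is $\psi\boxtimes\psi$ since $F(\psi)\cong\psi$. So $H(\psi)\cong\psi$, and Lemma~\ref{lem:grading} gives $H(\cC)=\cC$.
\end{itemize}
The grading-preserving isomorphism $L\cong A_H$ then restricts to an algebra isomorphism $L_F\cong A_{H|_\cC}$. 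The uniqueness part of the classification in $\cC^{\mathrm{rev}}\boxtimes\cC$ then yields $H|_\cC\cong F$.

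Second, ``Witt-trivial of dimension $4$ with a fermion, hence pointed'' needs a reason, since the Ising-type minimal extensions of $\sVec$ also have dimension $4$. You can cite injectivity of the Witt-class map on minimal extensions of $\sVec$, as the paper does. Or note that a Witt-trivial nondegenerate category of dimension $4$ is $\mathcal Z(\Vect_{\Z/2}^\omega)$, i.e.\ the toric code or the double semion, and only the former contains a fermion.

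Finally, your full-centre justification that $L_F$ is connected \'etale does not literally apply, since $\cC^{\mathrm{rev}}\boxtimes\cC\not\simeq\mathcal Z(\cC)$ for slightly degenerate $\cC$. The direct construction of graph algebras suffices.
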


\begin{proof}
Let $d=\FPdim(\cC)$ and $\cD=\cE^{\mathrm{rev}}\boxtimes\cE$. The graph algebra
\begin{equation}\label{eq:surj-graph}
 A=A_F=\bigoplus_{X\in\Irr(\cC)}X^*\boxtimes F(X)
\end{equation}
is connected \'etale of dimension $d$~\cite[\S3.2]{Davydov:2011dqx}. By standard condensation formulas, the category $\mathcal K=\cD_A^0$ of local $A$-modules~\cite[\S3.5]{Davydov:2010kfz} is nondegenerate, with
\begin{equation}\label{eq:surj-residual}
	\FPdim(\mathcal K)=4,\qquad [\mathcal K]=[\cD]=0
\end{equation}
in the Witt group~\cite[Cors.~3.30, 3.32 and Prop.~5.4]{Davydov:2010kfz}.

The objects $g_1=\psi\boxtimes\one$ and $g_2=\one\boxtimes\psi$ centralize $A$, because $\psi$ centralizes $\cC$. Their free modules $h_i=g_i\otimes A$ are therefore local~\cite[Lem.~3.15]{Davydov:2010kfz}, and since the free-module functor from the centralizer of $A$ to $\cD_A^0$ is braided~\cite[\S\S3.4--3.5]{Davydov:2010kfz}, each $h_i$ satisfies $h_i\otimes_A h_i\cong\one$ and $c_{h_i,h_i}=c_{g_i,g_i}=-\id$. Moreover, $h_1$ and $h_2$ are nontrivial because $g_i$ would be a summand of $A$ if $g_i\otimes A\cong A$, whereas a summand $X^*\boxtimes F(X)$ with a trivial factor has $X\cong\one$ and is therefore $\one\boxtimes\one$. Hence $h_1$ and $h_2$ are fermions in $\mathcal K$. So $\langle h_1\rangle\simeq\sVec$, and $\mathcal K$, being nondegenerate of dimension $4=2\FPdim(\sVec)$, is a minimal nondegenerate extension of $\sVec$. By injectivity of the Witt-class map for such extensions~\cite[Thm.~5.17 and the nonunitary discussion in \S5.3]{Lan:2016rcq}, we identify $\mathcal K$ with the toric code using~\eqref{eq:surj-residual}. In particular, $h_1\cong h_2$ is its unique fermion.

Since the toric code is pointed, we can choose a nontrivial invertible boson $b\in\mathcal K$, namely $e$ or $m$~\cite[\S5.2, Eq.~(41)]{Kitaev:2005hzj}. The subcategory $\{\one,b\}$ is Tannakian, and it equals its own centralizer because that centralizer has dimension $4/2=2$~\cite[\S2.2, Eq.~(5)]{Davydov:2011dqx}, so $\one_{\mathcal K}\oplus b$ is a Lagrangian algebra in $\mathcal K$~\cite[Rem.~4.7]{Davydov:2010kfz}. It lifts to a connected \'etale algebra $L=A\oplus U(b)$ in $\cD$~\cite[Prop.~3.16]{Davydov:2010kfz}, where $U$ is the functor that forgets the $A$-module structure. Since $b$ is invertible, $\FPdim(U(b))=\FPdim(A)=d$~\cite[proof of Lem.~3.5]{Davydov:2011dqx}, so $L$ has dimension $2d$ and is Lagrangian in $\mathcal D$.

The parity grading of Lemma~\ref{lem:grading} on each factor gives $\cD$ a $(\Z/2)^2$-grading with $A\in\cC^{\mathrm{rev}}\boxtimes\cC$ in degree $(0,0)$. Since $b$ is a simple $A$-module, $U(b)$ is homogeneous, say of degree $(p,q)$. Its monodromies with $h_1,h_2$, induced from those with $g_1,g_2$, are $(-1)^p,(-1)^q$, both of which equal $-1$ because $h_1\cong h_2$ is the fermion $\varepsilon$ of the toric code, which braids by $-1$ with $e$ and $m$~\cite[\S5.2]{Kitaev:2005hzj}. Hence $(p,q)=(1,1)$. Consequently $L$ has $L_{0,0}=A$ as an algebra and
\begin{equation}\label{eq:surj-factor-intersections}
	L\cap(\cE^{\mathrm{rev}}\boxtimes\one)=\one\boxtimes\one=L\cap(\one\boxtimes\cE).
\end{equation}

By the classification of connected \'etale algebras in a product~\cite[Thm.~3.6 and Prop.~3.7]{Davydov:2011dqx}, $L$ is therefore isomorphic to the graph algebra $A_H$ of a braided autoequivalence $H$ of $\cE$, because the factor algebras are trivial by~\eqref{eq:surj-factor-intersections} and, since $L$ is Lagrangian, both graph subcategories are the whole factors. The isomorphism $L\cong A_H$ preserves the grading, so it restricts to an algebra isomorphism $A_F=L_{0,0}\cong(A_H)_{0,0}$. The summands of $A_H$ in degree $(0,0)$ are the objects $X^*\boxtimes H(X)$ with $X,H(X)\in\cC$. Their total dimension equals $\FPdim(A_F)=d=\sum_{X\in\Irr(\cC)}\FPdim(X)^2$, so $H(\cC)\subseteq\cC$. Hence $H(\cC)=\cC$, because $H$ maps the finitely many simple objects of $\cC$ injectively to simple objects of $\cC$. Since $\cC$ is closed under tensor products, $(A_H)_{0,0}$ is the graph algebra of $H|_{\cC}$, so
\begin{equation*}
	A_{H|_{\cC}}\cong A_F\qquad\text{as algebras}.
\end{equation*}
By~\cite[Thm.~3.6]{Davydov:2011dqx}, applied in $\cC^{\mathrm{rev}}\boxtimes\cC$, graph algebras of non-isomorphic braided autoequivalences are non-isomorphic. Hence there is a monoidal natural isomorphism $\eta\colon H|_{\cC}\Rightarrow F$.

Given an LR pair $(F,\varphi)$, we take $H$ unit-normalized and put $\widetilde\varphi=\varphi\circ\eta_\psi$. By Remark~\ref{rem:LR-meaning}, $(H|_{\cC},\widetilde\varphi)$ is an LR pair and $\eta$ is an isomorphism of pairs $(H|_{\cC},\widetilde\varphi)\to(F,\varphi)$. Condition~\eqref{eq:vertex} for $(H,\widetilde\varphi)$ reads $e\circ(\widetilde\varphi\otimes\widetilde\varphi)=H(e)\circ J^H_{\psi,\psi}$, which is the same equation as for $(H|_{\cC},\widetilde\varphi)$. Hence $(H,\widetilde\varphi)$ is an LR pair of $\cE$, and
\begin{equation*}
	r[H,\widetilde\varphi]=[H|_{\cC},\widetilde\varphi]=[F,\varphi].
\end{equation*}
The unitary statement follows from Lemma~\ref{lem:unitary}.
\end{proof}

\begin{remark}\label{rem:surjectivity-literature}
The existence statement of Proposition~\ref{prop:surjectivity} also follows by combining earlier results. We fix the embedding $\iota\colon\cC\hookrightarrow\cE$. Since $F$ is a braided autoequivalence of $\cC$, the embedding $\iota\circ F$ has the same image as $\iota$, so $(\cE,\iota)$ and $(\cE,\iota\circ F)$ are both marked minimal nondegenerate extensions of $\cC$~\cite[Rem.~4.10]{Lan:2016rcq}. By~\cite[Thm.~5.4]{Lan:2016rcq}, applied with M\"uger center $\sVec$, the group of minimal nondegenerate extensions of $\sVec$ acts freely and transitively on equivalence classes of marked extensions of $\cC$ by the relative product $\star$. Hence there is a minimal nondegenerate extension $\mathcal P$ of $\sVec$ such that
\begin{equation*}
	(\cE,\iota\circ F)\simeq(\cE,\iota)\star\mathcal P
\end{equation*}
as marked extensions. Here $\star$ is the relative product of~\cite[Lem.~4.11 and \S5.1]{Lan:2016rcq}, obtained by condensing the algebra $\one\boxtimes\one\oplus\psi\boxtimes\psi$ in $\cE\boxtimes\mathcal P$. The torsor argument above applies without unitarity~\cite[Rem.~3.1]{Johnson-Freyd:2026galois}.

Write $[\mathcal D]$ for the Witt class of a nondegenerate braided fusion category $\mathcal D$. Changing the marking leaves the underlying category $\cE$ unchanged, while condensation preserves its Witt class \cite[Prop.~5.4]{Davydov:2010kfz}. Hence
\begin{equation*}
	[\cE]=[\cE\star\mathcal P]=[\cE]+[\mathcal P],\qquad [\mathcal P]=0.
\end{equation*}
The Witt-class map on minimal extensions of $\sVec$ is injective~\cite[Thm.~5.17 and \S5.3]{Lan:2016rcq}, so $\mathcal P$ is the toric-code extension $\mathcal Z(\sVec)$, which is the unit of the group of minimal nondegenerate extensions of $\sVec$~\cite[Thm.~4.20]{Lan:2016rcq}. Since the unit acts trivially~\cite[Lem.~4.18]{Lan:2016rcq}, we get $(\cE,\iota\circ F)\simeq(\cE,\iota)$ as marked extensions. Choosing the direction of this equivalence, we obtain $H\colon\cE\to\cE$ with a monoidal isomorphism $H\circ\iota\Rightarrow\iota\circ F$. Transporting the fermion marking as in the proof of Proposition~\ref{prop:surjectivity}, we obtain a preimage under $r$ of every LR class $[F,\varphi]$.
\end{remark}

\begin{proof}[Proof of the Main Theorem]
By Proposition~\ref{prop:alpha}(3), Proposition~\ref{prop:alpha-nontrivial} and Corollary~\ref{cor:alpha-square}, $[\alpha_\psi]$ is an element of order two in $\ker r$. Conversely, let $\kappa\in\ker r$. By Lemma~\ref{lem:normalization}, $\kappa=[\Phi,\id_\psi]$ for a normalized pair $(\Phi,\id_\psi)$, and by Proposition~\ref{prop:module-type}, $\Phi|_\cV\cong R_a$ as module functors for some $a\in\{\one,\psi\}$.

If $a=\one$, then $\Phi$ satisfies the hypotheses of Lemma~\ref{lem:rigidity} with $\cA=\cC$ and $\cM=\cV$, by Definition~\ref{def:normalized} and Lemma~\ref{lem:normalization}. By Lemma~\ref{lem:rigidity}, there is a monoidal isomorphism $\Phi\Rightarrow\Id_\cE$ that is the identity on $\cC$, in particular at $\psi$. So $\kappa=[\Id_\cE,\id_\psi]=1$.

If $a=\psi$, then $(\Phi,\id_\psi)\cong(\alpha,\id_\psi)$ by Corollary~\ref{cor:alpha-rigid}, so $\kappa=[\alpha_\psi]$.

Hence $\ker r=\{[\Id_\cE,\id_\psi],[\alpha_\psi]\}\cong\Z/2$. Surjectivity follows from Proposition~\ref{prop:surjectivity}. Conjugation acts on the normal subgroup $\ker r$ by automorphisms. Since a group of order two has no nontrivial automorphisms, $\ker r$ is central. Hence~\eqref{eq:central-exact} is a central short exact sequence. Each fibre of $r$ is a coset of $\ker r$, so every class in $\AutLR(\cC)$ has exactly two lifts, which differ by $[\alpha_\psi]$. If $\cE$ is unitary, then by Lemma~\ref{lem:unitary} the unitary restriction homomorphism is identified with $r$, and we obtain the same central short exact sequence.
\end{proof}

\begin{remark}\label{rem:obstruction}
The Main Theorem determines the coefficients of the second-layer obstruction of~\cite[\S VI.A]{Bulmash:2021ryq}, which obstructs lifting a symmetry action on $\cC$ to an action on $\cE$. Let a group $G_b$ act on $\cC$ in the sense of~\cite[\S VI.A]{Bulmash:2021ryq}, with classes $[\rho_g]\in\AutLR(\cC)$ satisfying $[\rho_g][\rho_h]=[\rho_{gh}]$. By the Main Theorem, each $[\rho_g]$ has a lift $[\check\rho_g]\in\AutLR(\cE)$, but the lifts need not satisfy the group law. The failure is measured by
\begin{equation*}
	o_2(g,h)=[\check\rho_{gh}][\check\rho_h]^{-1}[\check\rho_g]^{-1}\in\ker r.
\end{equation*}
Writing $g\cdot m=[\check\rho_g]\,m\,[\check\rho_g]^{-1}$ for $g\in G_b$ and $m\in\ker r$, which lies in $\ker r$ since $\ker r$ is normal, we have $o_2(gh,k)\,o_2(g,h)=o_2(g,hk)\,\big(g\cdot o_2(h,k)\big)$ for $g,h,k\in G_b$ by associativity, so $o_2$ is a $2$-cocycle for this action~\cite[Eqs.~(84) and~(89)]{Bulmash:2021ryq}. A priori, its class lies in $H^2(G_b;\ker r)$, which is a group cohomology group only if $\ker r$ is Abelian, since only then is conjugation by the lifts a well-defined action of $G_b$ on $\ker r$. This is the standing assumption of~\cite[\S VI.A]{Bulmash:2021ryq}, and even then the coefficients may be twisted. By the Main Theorem, $\ker r\cong\Z/2$. Hence $\ker r$ is Abelian, and the conjugation action of $G_b$ on $\ker r$ is trivial, because $\Z/2$ has no nontrivial automorphisms. Thus $H^2(G_b;\ker r)=H^2(G_b;\Z/2)$ is group cohomology with trivial coefficients, and $o_2$ defines a class in it. This class vanishes exactly when the lifts can be chosen to form a homomorphism $G_b\to\AutLR(\cE)$. The same identification of coefficient groups applies to the modified obstruction in~\cite[Eqs.~(98)--(100)]{Bulmash:2021ryq}; its vanishing means that the lift classes can be chosen to satisfy the modified multiplication law in~\cite[Eq.~(99)]{Bulmash:2021ryq}.
\end{remark}

\section{Relation with the theory of graded extensions}\label{sec:extension}

In this section, we identify $\ker r$, which is defined through the fermion $\psi$ and the local-vertex condition~\eqref{eq:vertex}, with a group defined purely in terms of the inclusion $\cC\subset\cE$, namely the automorphism group of $\cE$ in the groupoid of minimal nondegenerate extensions of $\cC$ (Proposition~\ref{prop:G1}(2)). We also determine the monoidal automorphisms of $\Id_\cE$ that are trivial on $\cC$ (Proposition~\ref{prop:G1}(3)), and in Remark~\ref{rem:G1-vs-A} we explain the relation with the Main Theorem.

Minimal nondegenerate extensions of $\cC$ form a groupoid $\mathrm{Mext}$. Its objects are the extensions $\cC\subset\cE$. A morphism $\cE\to\cE'$ is a braided equivalence $F\colon\cE\to\cE'$ together with a monoidal natural isomorphism $\eta\colon F|_\cC\Rightarrow\iota'$, where $\iota'\colon\cC\hookrightarrow\cE'$ is the inclusion. Two morphisms, given by $F,\eta$ and by $F',\eta'$, are identified if there is a monoidal isomorphism $\vartheta\colon F\Rightarrow F'$ with $\eta'\circ\vartheta|_\cC=\eta$. Thus $\mathrm{Mext}$ is the fundamental groupoid of the 2-groupoid of minimal nondegenerate extensions~\cite[\S3]{Johnson-Freyd:2026galois}, whose 2-morphisms are the isomorphisms $\vartheta$. Then $\pi_0(\mathrm{Mext})$, the set of isomorphism classes of objects, is the set of equivalence classes of minimal nondegenerate extensions. In the unitary setting, it is the set studied in~\cite[Thm.~5.4]{Lan:2016rcq}. We now describe the automorphism group $\pi_1(\mathrm{Mext},\cE)$ of $\mathrm{Mext}$ at a chosen object $\cE$.

\begin{definition}\label{def:G1}
$\pi_1(\mathrm{Mext},\cE)$ is the group of classes $\mcl{F,\eta}$, where $F$ is a braided autoequivalence of $\cE$ with $F(\cC)\subseteq\cC$ and $\eta\colon F|_\cC\Rightarrow\Id_\cC$ is a monoidal natural isomorphism. Here $\mcl{F,\eta}=\mcl{F',\eta'}$ if there is a monoidal isomorphism $\vartheta\colon F\Rightarrow F'$ with $\eta'\circ\vartheta|_\cC=\eta$, and the product is $\mcl{F,\eta}\,\mcl{G,\eta'}=\mcl{FG,\eta\circ F(\eta')}$. We use double brackets to distinguish these classes from the classes $[F,\varphi]$ in $\AutLR(\cE)$.
\end{definition}

By the horizontal composite in the proof of Lemma~\ref{lem:LR-group}(2), the product is compatible with identifications. Transporting along $\Theta=\eta\oplus\id_{F|_\cV}$ as in the proof of Lemma~\ref{lem:normalization}, we write every class as $\mcl{F,\id}$ with $F|_\cC=\Id_\cC$ as a tensor functor. Two such classes $\mcl{F,\id}$ and $\mcl{F',\id}$ coincide precisely when there is a monoidal isomorphism $F\Rightarrow F'$ that is the identity on $\cC$, and $\mcl{F,\id}\,\mcl{G,\id}=\mcl{FG,\id}$. Hence $\pi_1(\mathrm{Mext},\cE)$ is the fundamental group at $\cE$~\cite[Def.~5.2]{Noohi:2007} of the 2-groupoid $\mathbf{Ex}_{br}(\Z/2,\cC)$ of braided $\Z/2$-extensions of $\cC$, whose 1-cells restrict to $\Id_\cC$ and whose 2-cells restrict to identities~\cite[\S8.1 and Defs.~8.2, 8.15, 8.16]{Davydov-Nikshych:2021}. In particular, it is a group.

\begin{proposition}\label{prop:G1}
Let $\cC\subset\cE$ be a minimal nondegenerate extension.
\begin{enumerate}
\item $\pi_1(\mathrm{Mext},\cE)=\{\mcl{\Id_\cE,\id},\mcl{\alpha,\id}\}\cong\Z/2$.
\item The homomorphism $\pi_1(\mathrm{Mext},\cE)\to\ker r$,
$\mcl{F,\eta}\mapsto[F,\eta_\psi]$, is an isomorphism.
\item The monoidal natural automorphisms of $\Id_\cE$ that restrict to the
identity on $\cC$ are exactly $\id$ and the grading automorphism
$\varepsilon$ with $\varepsilon_X=(-1)^{p(X)}\id_X$. Thus the automorphism group of
$\Id_\cE$ in $\mathbf{Ex}_{br}(\Z/2,\cC)$ is $\Z/2$.
\end{enumerate}
\end{proposition}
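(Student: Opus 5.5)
The plan is to prove (2) first, deduce (1) from it and the Main Theorem, and then prove (3) by a separate rigidity-style argument on monoidal automorphisms of $\Id_\cE$.

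\emph{Well-definedness of the map in (2).} Given $\mcl{F,\eta}$, the fermion $\psi$ lies in $\cC$, so $F(\psi)\cong\psi$. By Remark~\ref{rem:LR-meaning}, the restriction of $\eta$ to $\langle\psi\rangle$ is a monoidal trivialization, so $\eta_\psi$ satisfies the local-vertex condition~\eqref{eq:vertex}. Then $\eta$ is a morphism of pairs $(F|_\cC,\eta_\psi)\to(\Id_\cC,\id_\psi)$, hence $[F,\eta_\psi]\in\ker r$ by Definition~\ref{def:restriction}. If $\vartheta\colon F\Rightarrow F'$ satisfies $\eta'\circ\vartheta|_\cC=\eta$, then $\eta'_\psi\vartheta_\psi=\eta_\psi$, so $\vartheta$ is a morphism of pairs and the map is well defined. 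Multiplicativity is immediate from the two product formulas, since $(\eta\circ F(\eta'))_\psi=\eta_\psi F(\eta'_\psi)$.

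\emph{Bijectivity in (2).} Surjectivity is exactly Definition~\ref{def:restriction}: a class $[F,\varphi]\in\ker r$ comes with $\eta\colon F|_\cC\Rightarrow\Id_\cC$ such that $\eta_\psi=\varphi$. For injectivity, suppose $[F,\eta_\psi]=1$. Normalize as in Lemma~\ref{lem:normalization}, so that $\mcl{F,\eta}=\mcl{\Phi,\id}$ with $(\Phi,\id_\psi)$ normalized and $[\Phi,\id_\psi]=1$. By Proposition~\ref{prop:module-type}, $\Phi|_\cV\cong R_a$. If $a=\psi$, Corollary~\ref{cor:alpha-rigid} gives $[\Phi,\id_\psi]=[\alpha_\psi]\ne1$ by Proposition~\ref{prop:alpha-nontrivial}, a contradiction. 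Hence $a=\one$, and Lemma~\ref{lem:rigidity} gives a monoidal isomorphism $\Phi\Rightarrow\Id_\cE$ that is the identity on $\cC$. This shows $\mcl{\Phi,\id}=1$. The same dichotomy also yields (1): every class is $\mcl{\Id,\id}$ or $\mcl{\alpha,\id}$, these are distinct because their images are distinct, and $\ker r\cong\Z/2$.

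\emph{Proof of (3).} Let $\gamma$ be a monoidal automorphism of $\Id_\cE$ with $\gamma|_\cC=\id$. For simple $X\in\cV$ we have $\gamma_X=s_X\in\CC^\times$, and monoidality at $(b,X)$ with $b\in\cC$ gives $s_Y=s_X$ for every simple summand $Y$ of $b\otimes X$. Since $\cV$ is an invertible bimodule category, I expect its indecomposability as a left $\cC$-module to make $s_X=s$ constant. I would argue this alternatively via monoidality at $(X,Y)$ with $X,Y\in\cV$: the tensor condition gives $\id_{X\otimes Y}=\gamma_{X\otimes Y}=s_Xs_Y\,\id$, so $s_Xs_Y=1$ for all vortices $X,Y$, which for $X=Y$ gives $s_X=\pm1$ and then forces all $s_X$ to be equal. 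Thus $s=\pm1$, giving $\id$ or $\varepsilon$. Conversely, $\varepsilon$ is monoidal because the grading is multiplicative (Lemma~\ref{lem:grading}). These two automorphisms are distinct since $\cV\neq0$.

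The main obstacle is the injectivity step in (2). Everything else is bookkeeping, but injectivity depends on correctly matching the identification of classes in $\pi_1$ (via $\vartheta$ compatible with $\eta$ on all of $\cC$) with the weaker identification in $\AutLR$ (compatibility at $\psi$ only). The rigidity results resolve this, since they produce isomorphisms that are the identity on all of $\cC$.
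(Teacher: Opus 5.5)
Your proposal is correct. For (1)--(2) it uses the same ingredients as the paper in a different order; for (3) it gives a genuinely more elementary argument.

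\textbf{Parts (1) and (2).} The paper proves (1) first and intrinsically. It gets $\mcl{\alpha,\id}\neq1$ from Corollary~\ref{cor:morita}(2): a monoidal isomorphism $\alpha\Rightarrow\Id_\cE$ that is trivial on all of $\cC$ would be a module isomorphism $R_\psi\Rightarrow R_\one$. It then obtains (2) by counting: a surjective homomorphism between two groups of order two, using (1) and the Main Theorem. You instead prove injectivity in (2) directly by showing the kernel is trivial. Because identifications in $\AutLR$ see only the component at $\psi$, this forces you to use the stronger, sign-based nontriviality of Proposition~\ref{prop:alpha-nontrivial}, both there and when you separate the two classes in (1). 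The paper's proof of (1) needs only the weaker Morita distinction, since identifications in $\pi_1$ are trivial on all of $\cC$. Its proof of (2) then inherits the sign argument through the Main Theorem, so overall the same inputs are used.

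\textbf{Part (3).} The paper uses the mixed condition at $(b,X)$ to view $\xi|_\cV$ as a module automorphism of $\Id_\cV\cong R_\one$. Corollary~\ref{cor:morita}(2) then makes it a single scalar, and the odd--odd condition gives $s^2=1$. Your observation goes further: the odd--odd condition alone gives $s_Xs_Y=1$ for all simple vortices $X,Y$, because $X\otimes Y\in\cC$ and $X\otimes Y\neq0$. Taking $X=Y$ gives $s_X=\pm1$, and then all $s_X$ are equal. This yields both conclusions at once and bypasses graded Morita theory entirely. Your tentative indecomposability argument, which is essentially the paper's route, is therefore unnecessary.
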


\begin{proof}
(1) Let $\mcl{F,\id}\in\pi_1(\mathrm{Mext},\cE)$ with $F|_\cC=\Id_\cC$. By
Proposition~\ref{prop:module-type}, $F|_\cV\cong R_a$ for a unique $a\in\{\one,\psi\}$, and $a$
depends only on the class, because a monoidal isomorphism $F\Rightarrow F'$ that is the
identity on $\cC$ is a module isomorphism $F|_\cV\Rightarrow F'|_\cV$. If $a=\one$, the
isomorphism $F\Rightarrow\Id_\cE$ of Lemma~\ref{lem:rigidity} is the identity on $\cC$, so
$\mcl{F,\id}=1$. If $a=\psi$, the isomorphism $\nu\colon F\Rightarrow\alpha$ of
Corollary~\ref{cor:alpha-rigid} is the identity on $\cC$, so $\mcl{F,\id}=\mcl{\alpha,\id}$. By Corollary~\ref{cor:alpha-square}, $\mcl{\alpha,\id}^2=1$. Finally, $\mcl{\alpha,\id}\neq1$, since a monoidal isomorphism $\alpha\Rightarrow\Id_\cE$ that is the identity on $\cC$ would be a module isomorphism $R_\psi\Rightarrow R_\one$, contradicting Corollary~\ref{cor:morita}(2).

(2) By Remark~\ref{rem:LR-meaning}, $(F,\eta_\psi)$ is an LR autoequivalence, and by
Definition~\ref{def:restriction} its class lies in $\ker r$. An identification $\vartheta$
of $\mcl{F,\eta}$ with $\mcl{F',\eta'}$ satisfies $\eta'_\psi\vartheta_\psi=\eta_\psi$, so it is a
morphism of pairs $(F,\eta_\psi)\to(F',\eta'_\psi)$. Hence the map is well defined. It
is a homomorphism by~\eqref{eq:compose-pairs}, and it is surjective by
Definition~\ref{def:restriction}. Both groups have order two, by (1) and
the Main Theorem, so it is an isomorphism.

(3) Let $\xi$ be a monoidal automorphism of $\Id_\cE$ with $\xi|_\cC=\id$.
By~\eqref{eq:monoidal-nat} at $(b,X)$ with $b\in\cC$ and $X\in\cV$, $\xi|_\cV$ is a module
automorphism of $\Id_\cV\cong R_\one$, hence a scalar $s$ by Corollary~\ref{cor:morita}(2).
By~\eqref{eq:monoidal-nat} at $(X,Y)$ with $X,Y\in\cV$, $\id=s^2$, so $s=\pm1$.
Conversely, $\varepsilon$ is monoidal because parity is additive (Lemma~\ref{lem:grading}).
\end{proof}

\begin{remark}\label{rem:G1-vs-A}
Proposition~\ref{prop:G1} distinguishes two levels of symmetry data. In the unitary setting, we can view $\cC$ as the original fermionic anyon sector and $\cV$ as the vortex sector introduced by gauging fermion parity.

At the first level, $\pi_1(\mathrm{Mext},\cE)$ classifies braided autoequivalences of $\cE$ whose action on $\cC$ is coherently identified with the identity. There are exactly two classes, represented by the identity and by fermion attachment $\alpha$, which attaches $\psi$ to every vortex. By Proposition~\ref{prop:G1}(2), this group is isomorphic to $\ker r$. A class $\mcl{F,\eta}$ includes a monoidal natural isomorphism $\eta\colon F|_\cC\Rightarrow\Id_\cC$, which records how the transformed anyons in $\cC$ are identified with the original ones, compatibly with fusion. A class $[F,\varphi]\in\ker r$ retains only the fermion identification $\varphi=\eta_\psi$, and requires only that a compatible $\eta$ exist. So we obtain a surjection $\mcl{F,\eta}\mapsto[F,\eta_\psi]$ by forgetting the other components of $\eta$. It is injective because both groups have exactly two elements, by Proposition~\ref{prop:G1}(1) and the Main Theorem.

At the second level, by Proposition~\ref{prop:G1}(3), the only monoidal natural automorphism of $\Id_\cE$ other than the identity that is trivial on $\cC$ is
\begin{equation*}
	\varepsilon_X=\begin{cases}\id_X, & X\in\cC,\\ -\id_X, & X\in\cV.\end{cases}
\end{equation*}
The common vortex sign is compatible with fusion because two vortices fuse into $\cC$, so their two signs multiply to $+1$. This automorphism leaves every anyon label unchanged and measures an ambiguity in the identification of symmetry actions. In particular, if $\xi\colon F\Rightarrow\Id_\cE$ is a monoidal natural isomorphism with $\xi|_\cC=\id$, then the only other such isomorphism is $\varepsilon\circ\xi$. The automorphisms $\id$ and $\varepsilon$ form $\pi_2$ of the 2-groupoid $\mathbf{Ex}_{br}(\Z/2,\cC)$ at $\cE$.

Thus the two copies of $\Z/2$ live at different levels. The group $\pi_1$ classifies symmetry actions, whereas $\pi_2$ consists of automorphisms of the identity action, and only $\pi_1$ enters the group extension~\eqref{eq:central-exact} of the Main Theorem. The abstract values $\pi_1=\pi_2=\Z/2$ are recorded in~\cite[\S3]{Johnson-Freyd:2026galois}. In Proposition~\ref{prop:G1}, we give explicit representatives and identify $\pi_1$ with the LR kernel.
\end{remark}

\begin{remark}\label{rem:DN}
The braided Picard description packages the even and vortex sectors, their fusion, and its coherence data. By~\cite[Thm.~8.18]{Davydov-Nikshych:2021}, $\mathbf{Ex}_{br}(\Z/2,\cC)$ is 2-equivalent to the 2-groupoid of braided monoidal 2-functors $\Z/2\to\underline{\underline{\mathrm{Pic}}}_{br}(\cC)$. The ambient Picard structure has~\cite[\S5.3, Eqs.~(5.19)--(5.20)]{Davydov-Nikshych:2021}
\[
  \pi_1=\Inv(\Zt(\cC))=\{\one,\psi\},
  \qquad \pi_2=\CC^\times \ .
\]

Proposition~\ref{prop:G1} instead computes the homotopy groups of the \emph{mapping 2-groupoid at $\cE$}, both of which are $\Z/2$.

The explicit proof explains the distinction. Proposition~\ref{prop:module-type} gives the two module types $R_{\one}$ and $R_\psi$, corresponding to identity and fermion attachment. After reduction to the identity module type, Lemma~\ref{lem:rigidity} leaves only a common odd--odd tensorator $\lambda\,\id$. Rescaling the vortex identifications by $t$ changes $\lambda$ to $\lambda/t^2$. Thus $t^2=\lambda$ removes the scalar, while the two choices $\pm t$ differ by $\varepsilon$. The residual $\Z/2$ therefore describes compatible identifications,
rather than additional symmetry classes.

This interpretation is supplementary; the kernel proof uses only~\cite[\S8.1, Eq.~(8.5)]{Davydov-Nikshych:2021}
through Lemma~\ref{lem:normalization}.
\end{remark}

\section{Physical interpretation}\label{sec:applications}
In the unitary setting, $\cC$ describes the anyons of a fermionic topological phase, with the local fermion $\psi$ retained as a distinct object. A fixed minimal modular extension $\cE=\cC\oplus\cV$ describes the anyons of the topological phase after gauging the fermion parity \cite[\S1]{Bruillard:2016yio}. The simple objects in $\cV$ are fermion-parity vortices: transporting $\psi$ once around any such vortex produces the phase $-1$. We describe complex-linear topological symmetry classes by LR braided autoequivalences, which specify the action on anyon types and fusion spaces together with a fermion marking satisfying the local-vertex condition. Two representatives are equivalent when related by a monoidal natural isomorphism compatible with their fermion markings. When both markings are $\id_\psi$, this requires the component of the isomorphism at $\psi$ to be $\id_\psi$. The restriction homomorphism $r$ restricts these data to $\cC$. By the Main Theorem, every complex-linear LR symmetry class of $\cC$ has exactly two LR lifts to the fixed extension $\cE$. The two lifts differ by multiplication by $[\alpha_\psi]$, the class of fermion attachment, which binds the local fermion to every vortex.

The proof has three main physical ingredients.
\begin{itemize}
\item A symmetry that is trivial on $\cC$ acts on all vortices by attaching one and the same anyon, $\one$ or $\psi$ (Proposition~\ref{prop:module-type}).
\item If it attaches $\one$, its remaining action on the fusion spaces of two vortices is a single phase, which is removed by rephasing the vortices (Lemma~\ref{lem:rigidity}).
\item Attaching $\psi$ is not a rephasing, because a fermion fused into a vortex has to be exchanged with the bound fermion, at the cost $c_{\psi,\psi}=-1$ (Proposition~\ref{prop:alpha-nontrivial}).
\end{itemize}
We explain these ingredients in three examples.

\subsection{The Laughlin state}\label{subsec:physics-laughlin}

The Laughlin state at filling $\nu=1/k$, with $k$ odd, is described by
\begin{equation}\label{phy:laughlin-action}
	\mathcal L_\cC=-\frac{k}{4\pi}\,a\,da+\frac{1}{2\pi}\,A\,da,
\end{equation}
where $a$ is a compact $U(1)$ gauge field and $A$ is the electromagnetic field, normalized so that the electron has charge $1$~\cite[\S2.1, Eqs.~(2.5)--(2.10)]{Wen:1995qn}. The Wilson line $e^{im\oint a}$ has charge $m/k$ and topological spin $e^{\pi im^2/k}$. The electron, $m=k$, is a fermion that braids trivially with all of these lines, so the anyons of $\cC$ are $m\in\Z/2k$, with $\psi=k$~\cite[\S VI.C.1]{Bulmash:2021hmb}.

Since the electron has charge $1$, fermion parity is the rotation $(-1)^F=e^{i\pi Q}$ of the electromagnetic $U(1)$. We can therefore gauge it in the Lagrangian, by replacing $A$ with $A+c$ for a dynamical gauge field $c$, which is restricted to a $\Z_2$ gauge field by a BF term with a compact field $b$~\cite[\S III, Eqs.~(5)--(6)]{Khan:2016fth}:
\begin{equation}\label{phy:laughlin-gauging}
	-\frac{k}{4\pi}\,a\,da+\frac{1}{2\pi}\,(A+c)\,da-\frac{2}{2\pi}\,b\,dc\;\longrightarrow\;\mathcal L_\cE=-\frac{4k}{4\pi}\,b\,db+\frac{2}{2\pi}\,A\,db.
\end{equation}
If we integrate out $b$, then $c$ becomes a flat $\Z_2$ gauge field, and if we integrate out $c$, then $a=2b$~\cite[\S III.B]{Khan:2016fth}. The gauged theory is $U(1)_{4k}$. Its Wilson lines $W_n=e^{in\oint b}$, $n\in\Z/4k$, have
\begin{equation}\label{phy:gauged-data}
	Q(n)=\frac{n}{2k},\qquad\theta_n=e^{\pi in^2/(4k)},\qquad M_{n,n'}=e^{2\pi inn'/(4k)},
\end{equation}
and, at vanishing electromagnetic background $A=0$, their correlators are computed by the Gaussian integrals
\begin{equation}\label{phy:correlator}
	\Bigl\langle\prod_iW_{n_i}(L_i)\Bigr\rangle=\exp\Bigl(\frac{\pi i}{4k}\sum_{i,j}n_in_j\,\mathrm{Link}(L_i,L_j)\Bigr),
\end{equation}
with the self-linking numbers fixed by the framing. Since $e^{im\oint a}=W_{2m}$, the anyons of $\cC$ are the even lines, while the odd lines are the vortices. Before gauging, a fermion-parity flux is a flux tube of $A$ along a line $L$ with $dA=\pi\,\delta_L$, where $\delta_L$ is the two-form dual to $L$. By the field equation $k\,da=dA$ of~\eqref{phy:laughlin-action}, its $a$-flux is $\pi/k$. Since the electric charge density is $\frac{1}{2\pi}da$ by~\eqref{phy:laughlin-action}, it carries the charge $\frac{1}{2\pi}\cdot\frac{\pi}{k}=\frac{1}{2k}$, as in Laughlin's argument~\cite[\S III.A]{Khan:2016fth}, and the electron $e^{ik\oint a}$ acquires the phase $e^{i\pi}=-1$ around it. With a Wilson line $e^{iq\oint_La}$ the field equation would read $k\,da=2\pi q\,\delta_L$, so the flux tube has the same $a$-flux, charge and braiding as a Wilson line of $a$-charge $q=\frac12$. Such a line is not gauge invariant in $U(1)_k$, whose $a$-charges are integers. In its gauge-invariant form $e^{\frac i2\int_{\Sigma_L}da}=e^{i\pi Q(\Sigma_L)}$, with $\partial\Sigma_L=L$, the line is attached to a surface on which $(-1)^F$ acts, so the flux tube is only an external defect. After gauging, we insert $W_1=e^{i\oint_Lb}$ into the left side of~\eqref{phy:laughlin-gauging} and obtain
\begin{equation}\label{phy:laughlin-flux}
	dc=\pi\,\delta_L,\qquad da=2\,db=\frac1k\bigl(dA+\pi\,\delta_L\bigr),
\end{equation}
so on top of the response $dA/k$ to the background field, $W_1$ carries the $\pi$ flux of the $\Z_2$ gauge field $c$. After $c$ is integrated out, the vortex contributes $db=\frac{\pi}{2k}\,\delta_L$ in $\mathcal L_\cE$, and the half-charge line $e^{\frac i2\oint a}=e^{i\oint b}=W_1$ is now a genuine line. The electron $\psi=W_{2k}$ braids with $W_n$ by $M_{2k,n}=(-1)^n$, which is~\eqref{eq:parity}. So the electron is transparent in $\cC$, and every vortex detects it.

Fermion attachment (Definition~\ref{def:alpha}) binds the electron worldline to every vortex worldline:
\begin{equation}\label{phy:laughlin-attachment}
 \alpha\colon W_n\longmapsto W_n\,W_{2k}^{\,n}=W_{(1+2k)n}.
\end{equation}
It preserves the correlators~\eqref{phy:correlator}, because $(1+2k)^2\equiv1\pmod{8k}$ for odd $k$. Physically, binding the electron multiplies the topological spin of a vortex by $\theta_{2k}=-1$ and by the Aharonov--Bohm phase $-1$ of the electron around the vortex, and the two signs cancel, as in the proof of Proposition~\ref{prop:alpha}. It shifts the charge of every vortex by one electron. Here and below, we consider symmetries of the underlying braided category. Preserving the electromagnetic charge assignment modulo local charge-$2$ bosons is an additional requirement, which $\alpha$ does not satisfy.

Fermion attachment is the only nontrivial symmetry of $U(1)_{4k}$ that is trivial on $\cC$, in agreement with the Main Theorem. Indeed, a symmetry $\Phi$ that is trivial on $\cC$ fixes the even lines, and since $W_n=W_1W_{n-1}$, it is determined on the vortices by the image $W_{1+2m'}$ of the bare vortex $W_1$. Since the symmetry preserves the braiding $M_{2,1}=e^{\pi i/k}$ with the quasiparticle $W_2$, we get $2m'\in\{0,2k\}$, so the symmetry binds either nothing or one electron to every vortex, which is exactly Proposition~\ref{prop:module-type}. If $\Phi$ binds nothing, it fixes every line, and it can only multiply the junctions of Wilson lines by phases. A network $\Gamma$ of Wilson lines whose $b$-charges $n_e$ are conserved at the junctions defines the operator
\begin{equation}\label{phy:network}
 W[\Gamma]=\exp\Bigl(i\sum_en_e\int_eb\Bigr),
\end{equation}
where $e$ runs over the edges of $\Gamma$. It is gauge invariant because the charge is conserved at each junction. Its correlators are Gaussian integrals, as in~\eqref{phy:correlator}, so they do not change when a junction slides along the network. The symmetry multiplies the junction at which $W_{n_1}$ and $W_{n_2}$ merge into $W_{n_1+n_2}$ by a phase $u(n_1,n_2)$. Here $u(n_1,n_2)=1$ unless $n_1$ and $n_2$ are both odd, because the symmetry is trivial on the even lines and on their fusion with vortices, as required by Lemma~\ref{lem:rigidity}(1) and~(3). Now attach an even line $W_{2m}$, drawn dashed, to a junction of two vortex lines, on either incoming leg or on the outgoing leg:
\begin{equation}\label{phy:networks}
\begin{tikzpicture}[baseline=(current bounding box.center),scale=0.9,every node/.style={font=\scriptsize}]
\foreach \x/\k in {0/a,4.4/b,8.8/c} {
\begin{scope}[xshift=\x cm]
\draw[thick] (-0.8,-1) node[below]{$n_1$} -- (0,0);
\draw[thick] (0.8,-1) node[below]{$n_2$} -- (0,0);
\draw[thick] (0,0) -- (0,1);
\end{scope}
}
\draw[thick,dashed] (-1.4,-0.1) node[above]{$2m$} -- (-0.4,-0.5);
\draw[thick,dashed] (4.4+1.4,-0.1) node[above]{$2m$} -- (4.4+0.4,-0.5);
\draw[thick,dashed] (8.8-1.0,1.0) node[above]{$2m$} -- (8.8,0.5);
\node at (2.2,0) {$=$};
\node at (6.6,0) {$=$};
\end{tikzpicture}
\end{equation}
The three networks differ only by sliding the junction of $W_{2m}$, so they are the same operator, and the symmetry multiplies them by the same phase:
\begin{equation}\label{phy:laughlin-rigidity}
	u(n_1+2m,n_2)=u(n_1,n_2+2m)=u(n_1,n_2)
\end{equation}
which is~\eqref{eq:LBR}. Since every odd label is $1$ plus an even label, $u(n_1,n_2)=u(1,1)=:\lambda$ for all pairs of vortices, and it is removed by rephasing every vortex line by $\sqrt\lambda$. This is the conclusion of Lemma~\ref{lem:rigidity} in the Laughlin state. If $\Phi$ binds one electron to every vortex, its composite $\alpha\Phi$ with $\alpha$ binds two, a local boson, so $\alpha\Phi$ binds nothing and is a rephasing. Since $\alpha^2$ is also a rephasing (Corollary~\ref{cor:alpha-square}), $\Phi$ differs by rephasings from $\alpha^2\Phi=\alpha(\alpha\Phi)$ and hence from $\alpha$. So $\Phi$ and $\alpha$ define the same element $[\alpha_\psi]$ of $\ker r$ (Corollary~\ref{cor:alpha-rigid}).

Finally, $\alpha$ moves the vortices, so it is not a rephasing itself. From its tensor structure we also see why fermion attachment survives in general. When an electron is fused into a vortex $W_n$, with $n$ odd, that already carries the bound electron, the two electrons are exchanged:
\begin{equation}\label{phy:laughlin-exchange}
	\alpha(W_n)\,\alpha(\psi)=W_n\,\underbrace{\psi}_{\text{bound}}\,\underbrace{\psi}_{\text{fused in}}\xrightarrow{\ \id\otimes c_{\psi,\psi}\ }W_n\,\underbrace{\psi}_{\text{fused in}}\,\underbrace{\psi}_{\text{bound}}=\alpha(W_n\psi).
\end{equation}
So~\eqref{eq:J-X-psi} becomes the exchange sign $c_{\psi,\psi}=\theta_{2k}=e^{\pi ik}=-1$ of two electrons. To remove $\alpha$ by a rephasing, we would have to rephase the electron by this sign, which is not allowed (Proposition~\ref{prop:alpha-nontrivial}). Therefore $\alpha$ is nontrivial.

\subsection{The \texorpdfstring{$SO(3)_3$}{SO(3)3} theory}\label{subsec:physics-so3}

The spin Chern--Simons theory $SO(3)_3$ is a non-Abelian example in which fermion parity cannot be gauged in the Lagrangian formalism as in the previous example, since $SO(3)_3$ has no $U(1)$ symmetry that contains $(-1)^F$. The background field of $(-1)^F$ is then the spin structure, which enters only through the global definition of the theory and not as a field in its Lagrangian, so there is no analogue of the substitution $A\mapsto A+c$. We label the Wilson lines of the Chern--Simons theory $SU(2)_6$~\cite{Witten:1989} by twice their isospin, $j=0,1,\dots,6$. Their topological spins are $\theta_j=e^{2\pi i\,j(j+2)/32}$~\cite[\S6, Eq.~(6.2)]{Kirillov:2001ti}, and the line $\psi=W_6$, of isospin $3$ and topological spin $\theta_6=e^{3\pi i}=-1$, is a fermion. Then we can obtain $SO(3)_3$ through gauging the fermion line $\psi$ in $SU(2)_6$~\cite[\S3.1, Eq.~(3.15)]{Delmastro:2020dkz}
\begin{equation}\label{phy:so3-quotient}
	SO(3)_3=SU(2)_6/\Z_2^\psi \ .
\end{equation}
Both theories have the Chern--Simons action $\frac{6}{4\pi}\int\operatorname{tr}\bigl(a\,da +\frac23a^3\bigr)$, with the trace in the two-dimensional representation of $\mathfrak{su}(2)\cong\mathfrak{so}(3)$, and they differ only in their gauge bundles. The anyons of $SO(3)_3$ are the even lines $0,2,4,6$, with $\psi=W_6$.

Gauging $(-1)^F$ therefore means summing over spin structures $s$~\cite[\S3.1, Eq.~(3.18)]{Delmastro:2020dkz}. The quotient~\eqref{phy:so3-quotient} sums over insertions of the fermion line $\psi$ along closed curves $\gamma$ in spacetime, taken modulo $2$ because $W_6W_6=W_0$, that is, over $SO(3)$ gauge bundles $P$ with $w_2(P)$ dual to $\gamma$, and the fermion line needs the spin structure. Schematically,
\begin{equation}\label{phy:so3-partition}
	Z_{SO(3)_3}[s]\;\propto\sum_{\gamma\in H_1(M;\Z_2)}(-1)^{q_s(\gamma)}\,Z_{SU(2)_6}[\psi\text{ on }\gamma],
\end{equation}
where $Z_{SU(2)_6}[\psi\text{ on }\gamma]$ is the partition function of $SU(2)_6$ with the Wilson line $W_6$ inserted along $\gamma$, and $(-1)^{q_s(\gamma)}$ is the sign of the fermion line along $\gamma$ in the spin structure $s$. Deforming $\gamma$ changes each factor by a sign, but not their product~\cite[\S2.1]{Bhardwaj:2016clt}. The same formula appears in~\cite[\S2.1, Eq.~(2.7)]{Bhardwaj:2016clt} as a sum over the classes $\beta\in H^2(M;\Z_2)$ Poincar\'e dual to $\gamma$. The spin structure enters the sign only through a factor $(-1)^{\int_M\eta\cup\beta}$, where the cochain $\eta$ represents $s$~\cite[\S1.2, Eq.~(4)]{Gaiotto:2015zta}. So if we shift $s$ by $c\in H^1(M;\Z_2)$, this sign is multiplied by $(-1)^{c(\gamma)}$, and
\begin{equation}\label{phy:so3-gauging}
	\sum_cZ_{SO(3)_3}[s+c]\;\propto\sum_\gamma(-1)^{q_s(\gamma)}\,Z_{SU(2)_6}[\psi\text{ on }\gamma]\sum_c(-1)^{c(\gamma)}\;\propto\;Z_{SU(2)_6}.
\end{equation}
Since $\sum_c(-1)^{c(\gamma)}=|H^1(M;\Z_2)|\,\delta_{\gamma,0}$, only $\gamma=0$ survives. This is the analogue of integrating out $c$ in~\eqref{phy:laughlin-gauging}. In the sum over spin structures, the $SO(3)$ bundles with $w_2(P)\neq0$ cancel, and what remains is the bosonic parent $SU(2)_6$, as for any spin TQFT~\cite[\S1]{Bhardwaj:2016clt}. Indeed, since $\gamma$ is Poincar\'e dual to $w_2(P)$, the weight is $(-1)^{c(\gamma)}=(-1)^{\int_Mc\cup w_2(P)}$, so $c$ is a $\Z_2$ Lagrange multiplier that sets $w_2(P)=0$. Since $W_jW_6=W_{6-j}$, by the balancing identity~\eqref{eq:balancing} with $U=W_j$ and $V=\psi$ (compare~\cite[\S3, Eq.~(3.3)]{Delmastro:2020dkz}), the phase that $\psi$ acquires around $W_j$ is
\begin{equation}\label{phy:so3-detect}
	c_{\psi,W_j}\,c_{W_j,\psi}=\theta_{6-j}\,\theta_j^{-1}\,\theta_6^{-1}=(-1)^j,
\end{equation}
the action of the center $-1\in SU(2)$. So the odd lines $1,3,5$ braid with $\psi$ by $-1$. They are the fermion-parity fluxes, and in $SO(3)_3$ they are not genuine lines, for the same reason as the half-charge line of the Laughlin state. By~\eqref{phy:so3-detect}, a $\psi$ insertion $\gamma$ in~\eqref{phy:so3-partition} changes sign when it crosses an odd line $W_j(L)$. If we twist the spin structure across a surface $\Sigma_L$ with $\partial\Sigma_L=L$, we obtain the same sign, since
\begin{equation}\label{phy:so3-flux}
	(-1)^{q_{s+\Sigma_L}(\gamma)}=(-1)^{q_s(\gamma)}\,(-1)^{\Sigma_L\cdot\gamma}.
\end{equation}
So in $SO(3)_3$ an odd line exists only at the boundary of such a twist, around which the fermion acquires $-1$. This is the analogue of the flux tube $dA=\pi\,\delta_L$, and these lines form the twisted sector of~\cite[\S3]{Delmastro:2020dkz}. After gauging, the twist surface $\Sigma_L$ is no longer observable. Replacing it by another surface $\Sigma'_L$ with the same boundary shifts the spin structure by the class Poincar\'e dual to $\Sigma_L-\Sigma'_L$ in $H^1(M;\Z_2)$, which is summed over in~\eqref{phy:so3-gauging}. So only $L$ remains, and after gauging the odd lines are genuine lines of $SU(2)_6$, the vortices.

Fermion attachment binds the fermion line to every vortex line,
\begin{equation}\label{phy:so3-attachment}
	\alpha\colon W_j\longmapsto W_j\,W_6=W_{6-j}\qquad(j\text{ odd}).
\end{equation}
As for the Laughlin state, binding $\psi$ multiplies the topological spin of a vortex by $\theta_6=-1$ and by the monodromy $(-1)^j=-1$, so $\theta_{6-j}=\theta_j$ by~\eqref{phy:so3-detect}. Thus $\alpha$ exchanges the vortices $1$ and $5$, both of topological spin $e^{3\pi i/16}$, and it fixes $3$~\cite[\S3.1, Eq.~(3.16)]{Delmastro:2020dkz}.

A symmetry $\Phi$ that is trivial on $SO(3)_3$ fixes the even lines. It fixes $\psi$, so by~\eqref{phy:so3-detect} it maps vortices to vortices. For the Laughlin state, preserving the braiding with $W_2$ excluded every attached quasiparticle other than the vacuum and the electron. Here no attachment is excluded by braiding, since the only even lines that can be attached to a vortex without changing its quantum dimension, which a symmetry preserves~\cite[Prop.~4.5.7]{EGNO:2015}, are the invertible ones, $W_0$ and $W_6$, and these braid trivially with every even line. Instead, $\Phi$ is restricted by fusion. Since it fixes $W_2$, it satisfies $\Phi(W_2W_j)=W_2\,\Phi(W_j)$, and by the truncated Clebsch--Gordan rules~\cite[Example~4.10.6]{EGNO:2015}
\begin{equation}\label{phy:so3-chain}
	W_2W_1=W_1+W_3,\qquad W_2W_3=W_1+W_3+W_5,\qquad W_2W_5=W_3+W_5
\end{equation}
we get, for $j=1$ and $j=3$, \begin{align} \Phi(W_1)+\Phi(W_3)&=W_2\,\Phi(W_1),\label{phy:so3-half}\\ \Phi(W_1)+\Phi(W_3)+\Phi(W_5)&=W_2\,\Phi(W_3).\label{phy:so3-threehalves} \end{align} The images $\Phi(W_j)$ are simple vortices, so the left side of~\eqref{phy:so3-half} has two summands, and hence $\Phi(W_1)\neq W_3$ by~\eqref{phy:so3-chain}. If $\Phi(W_1)=W_1$, then $\Phi(W_3)=W_3$ by~\eqref{phy:so3-half} and $\Phi(W_5)=W_5$ by~\eqref{phy:so3-threehalves}. If $\Phi(W_1)=W_5$, then $\Phi(W_3)=W_3$ and $\Phi(W_5)=W_1$ by~\eqref{phy:so3-half} and~\eqref{phy:so3-threehalves}. So $\Phi$ is determined on the vortices by the image of $W_1$, and it binds either nothing or one fermion to every vortex, which is exactly Proposition~\ref{prop:module-type}. If $\Phi$ binds nothing, it fixes every line, and as in~\eqref{phy:networks} it can only multiply the junctions of Wilson-line networks by phases. Unlike in the Laughlin state, a junction of two vortex lines $W_i$ and $W_j$ also carries a channel, an even line $W_l$ in the fusion $W_iW_j$. For example, $W_3W_3=W_0+W_2+W_4+W_6$ has four channels. Sliding the junction of an attached line $W_2$, as in~\eqref{phy:networks}, is now an $F$-move: \begin{equation}\label{phy:so3-fmove}
\begin{tikzpicture}[baseline=(current bounding box.center),scale=0.9,every node/.style={font=\scriptsize}]
\draw[thick] (-1,-1.2) node[below]{$i$} -- (-0.62,-0.744) -- node[left,pos=0.72]{$i'$} (0,0);
\draw[thick] (1,-1.2) node[below]{$j$} -- (0,0);
\draw[thick] (0,0) -- (0,1.1) node[above]{$l$};
\draw[thick,dashed] (-1.72,-0.36) node[above]{$2$} -- (-0.62,-0.744);
\node[font=\normalsize] at (2.65,0) {$=\;\displaystyle\sum_{l'}\,\bigl[F^{2ij}_{l}\bigr]_{i'l'}$};
\begin{scope}[xshift=5.7cm]
\draw[thick] (-1,-1.2) node[below]{$i$} -- (0,0);
\draw[thick] (1,-1.2) node[below]{$j$} -- (0,0);
\draw[thick] (0,0) -- node[right]{$l'$} (0,0.55) -- (0,1.1) node[above]{$l$};
\draw[thick,dashed] (-1.05,1.05) node[above]{$2$} -- (0,0.55);
\end{scope}
\end{tikzpicture}
\end{equation} Here $i,i',j$ are vortices and $l,l'$ are channels. The vortex junction on the left side of~\eqref{phy:so3-fmove} merges the pair $(i',j)$ in the channel $l$, while those on the right side merge the pair $(i,j)$ in the channels $l'$. So the move relates junctions of different pairs and different channels, with the $6j$-symbols $[F^{2ij}_l]_{i'l'}$ of $SU(2)_6$ as coefficients. The symmetry fixes every line, so it commutes with these moves. By Lemma~\ref{lem:rigidity}, the symmetry therefore acts on the junctions of all pairs of vortices, in all their channels, by a single phase $\lambda$, and it is removed by rephasing every vortex line by $\sqrt\lambda$. If $\Phi$ binds one fermion to every vortex, then $\alpha\Phi$ binds two, and $W_6W_6=W_0$, so $\alpha\Phi$ binds nothing. By~\eqref{phy:so3-fmove} and Lemma~\ref{lem:rigidity}, $\alpha\Phi$ is then a rephasing, and as in the Laughlin state $\Phi$ differs from $\alpha$ by rephasings (Corollaries~\ref{cor:alpha-square} and~\ref{cor:alpha-rigid}). Fermion attachment, on the other hand, moves the vortices $1$ and $5$, and its tensor structure carries the exchange sign $c_{6,6}=\theta_6=-1$ of the bound fermion line with a fused-in one, as in~\eqref{eq:J-X-psi}.

\subsection{The Ising category}\label{subsec:physics-ising}

Free fermions with odd Chern number $\nu$\footnote{Here $\nu$ denotes the spectral Chern number in the Bogolyubov--Nambu description.}, for instance a $p+ip$ superconductor, have no anyons besides the local fermion, so $\cC=\sVec$~\cite[\S II.A]{Read:1999fn},~\cite[\S9]{Kitaev:2005hzj},~\cite[Prop.~3.11]{Bruillard:2016yio},~\cite[\S4.4]{Lan:2016rcq}. For $\nu=1$ the low-energy theory is a massive Majorana fermion $\lambda$,
\begin{equation}\label{phy:ising-action}
	\mathcal L_\cC=\frac i2\,\bar\lambda\gamma^\mu\partial_\mu\lambda-\frac m2\,\bar\lambda\lambda,
\end{equation}
with the sign of $m$ for which $\nu=1$~\cite[\S II.B, Eqs.~(21)--(23)]{Read:1999fn},~\cite[\S6.3, Eqs.~(51) and~(55)]{Kitaev:2005hzj}. Since $\lambda$ is a fermion,~\eqref{phy:ising-action} needs a spin structure $s$. More generally, consider the family of invertible spin TQFTs
\[
SO(N)_1=\operatorname{Spin}(N)_1/\mathbb Z_2^\psi,
\]
with \(N\) odd [25, §3.1, Eqs. (3.13)–(3.14)], [29, App. B, Eqs. (B.1)–(B.2)]. As for \(SO(3)_3\), gauging fermion parity by summing over spin structures, as in (8.13), gives \(\operatorname{Spin}(N)_1\), an Ising theory with vortex spin \(\theta_\sigma=e^{\pi iN/8}\). Its braided anyon data agree with those obtained by gauging the phase of Chern number \(\nu\) when \(N\equiv\nu\pmod{16}\); identifying the full chiral phases additionally requires matching their chiral central charges. A fermion-parity flux is a twist of the spin structure across a surface $\Sigma_L$ with $\partial\Sigma_L=L$, so that $\lambda\mapsto-\lambda$ around $L$. For \(\nu=1\), the field equation of~\eqref{phy:ising-action} in this background has an unpaired Majorana zero mode \(\gamma_L\) bound to \(L\) [19, §8.2]. More generally, a fermion-parity flux in a phase with odd \(\nu\) carries a protected unpaired Majorana zero mode. After gauging, the twist is summed over,
and $L$ becomes the vortex $\sigma$, with
\begin{equation}\label{phy:ising-fusion}
 \psi\otimes\sigma\cong\sigma,\qquad
 \sigma\otimes\sigma\cong\one\oplus\psi
\end{equation}
\cite[\S8.3, Eq.~(64)]{Kitaev:2005hzj}. Because of its zero mode, a vortex can absorb a fermion.

Since $\psi\otimes\sigma\cong\sigma$, the functor $-\otimes\psi$ also fixes $\sigma$, so here hypothesis~(3) of Lemma~\ref{lem:rigidity} cannot be seen on objects. Let $F$ be a braided autoequivalence of the Ising category with properties (1)--(3) of Lemma~\ref{lem:rigidity} for $\cA=\sVec$, and let $J'$ be as in the proof of Lemma~\ref{lem:rigidity}. Fix an isomorphism $(p_\one,p_\psi)\colon\sigma\otimes\sigma\to\one\oplus\psi$. Its component $p_b\colon\sigma\otimes\sigma\to b$ is nonzero, so it spans $\Hom(\sigma\otimes\sigma,b)$. Physically, $p_b$ is the vertex at which the two vortices fuse into $b$. Since $p_b\circ J'_{\sigma,\sigma}$ lies in $\Hom(\sigma\otimes\sigma,b)$,
\begin{equation*}
	p_b\circ J'_{\sigma,\sigma}=u_b\,p_b\qquad(b=\one,\psi)
\end{equation*}
for phases $u_\one$ and $u_\psi$, which do not depend on the choice of $p_b$. A priori $u_\one$ and $u_\psi$ may differ, and then the symmetry detects whether the two vortices fuse into $\one$ or into $\psi$. Fix isomorphisms $e\colon\psi\otimes\psi\to\one$ and $f\colon\psi\otimes\sigma\to\sigma$, and put
\begin{equation*}
v:=e\circ(\id_\psi\otimes p_\psi)\circ a_{\psi,\sigma,\sigma},\qquad w:=p_\one\circ(f\otimes\id_\sigma)
\qquad\text{in }\Hom((\psi\otimes\sigma)\otimes\sigma,\one).
\end{equation*}
Since $p_\one$ and $p_\psi$ are nonzero, $\psi\otimes-$ is an autoequivalence, and $e$, $f\otimes\id_\sigma$ and $a_{\psi,\sigma,\sigma}$ are isomorphisms, $v$ and $w$ are nonzero. By~\eqref{eq:LBR} for $a=\psi$ and $X=Y=\sigma$,
\begin{equation*}
a_{\psi,\sigma,\sigma}\circ J'_{\psi\otimes\sigma,\sigma}=(\id_\psi\otimes J'_{\sigma,\sigma})\circ a_{\psi,\sigma,\sigma},
\end{equation*}
and, by naturality of $J'$,
\begin{equation*}
(f\otimes\id_\sigma)\circ J'_{\psi\otimes\sigma,\sigma}=J'_{\sigma,\sigma}\circ(f\otimes\id_\sigma).
\end{equation*}
Hence
\begin{align*}
v\circ J'_{\psi\otimes\sigma,\sigma}&=e\circ(\id_\psi\otimes p_\psi)\circ(\id_\psi\otimes J'_{\sigma,\sigma})\circ a_{\psi,\sigma,\sigma}\\
&=e\circ\bigl(\id_\psi\otimes(p_\psi\circ J'_{\sigma,\sigma})\bigr)\circ a_{\psi,\sigma,\sigma}=u_\psi\,v,\\
w\circ J'_{\psi\otimes\sigma,\sigma}&=p_\one\circ(f\otimes\id_\sigma)\circ J'_{\psi\otimes\sigma,\sigma}
=p_\one\circ J'_{\sigma,\sigma}\circ(f\otimes\id_\sigma)=u_\one\,w.
\end{align*}
Since $(\psi\otimes\sigma)\otimes\sigma\cong\psi\otimes(\sigma\otimes\sigma)\cong\psi\otimes(\one\oplus\psi)\cong\psi\oplus\one$, by Schur's lemma~\cite[Prop.~1.8.4]{EGNO:2015},
\begin{equation*}
\Hom((\psi\otimes\sigma)\otimes\sigma,\one)\cong\Hom(\psi,\one)\oplus\End(\one)=0\oplus\CC.
\end{equation*}
As $v\neq0$, $v$ spans $\Hom((\psi\otimes\sigma)\otimes\sigma,\one)$, so $w=\kappa\,v$ for some $\kappa\in\CC$, and therefore
\begin{equation*}
u_\one\,w=w\circ J'_{\psi\otimes\sigma,\sigma}=\kappa\,(v\circ J'_{\psi\otimes\sigma,\sigma})=\kappa\,u_\psi\,v=u_\psi\,w.
\end{equation*}
As $w\neq0$, it follows that $u_\one=u_\psi$. Hence
\begin{equation*}
(p_\one,p_\psi)\circ J'_{\sigma,\sigma}=(p_\one\circ J'_{\sigma,\sigma},\,p_\psi\circ J'_{\sigma,\sigma})
=u_\one\,(p_\one,p_\psi),
\end{equation*}
and since $(p_\one,p_\psi)$ is an isomorphism, $J'_{\sigma,\sigma}=u_\one\,\id_{\sigma\otimes\sigma}$, which is~\eqref{eq:J-scalar} for the Ising category. Physically, $v$ and $w$ are proportional, $w=\kappa\,v$, so they describe the same state of a fermion and two vortices with total charge $\one$. In $v$ the two vortices fuse into $\psi$, and in $w$ the vortex $\psi\otimes\sigma$, which has absorbed the fermion, fuses with $\sigma$ into $\one$. Thus a symmetry that acts trivially on $\sVec$ and attaches nothing to $\sigma$, that is, has module type $R_\one$ as in Lemma~\ref{lem:rigidity}(1) and~(3), multiplies $\Hom(\sigma\otimes\sigma,\one)$ and $\Hom(\sigma\otimes\sigma,\psi)$ by the same phase, so it cannot distinguish the fusion channels $\one$ and $\psi$ of two vortices. Its remaining phase is removed by rephasing $\sigma$ by $t$ with $t^2=u_\one$, as in the proof of Lemma~\ref{lem:rigidity}. For module type $R_\psi$, the symmetry acts on the zero mode by $\gamma\mapsto-\gamma$, so the absorption vertex $f$ in $w$ picks up a sign, and then $u_\psi=-u_\one$ follows from $w=\kappa\,v$.

For rigidity, the Ising theory is thus the opposite of the Laughlin state. In the Laughlin state each pair of vortices fuses in a single channel, and the leftover phase does not depend on the pair. Here $\sigma$ is the only vortex, and the leftover phase does not depend on the channel. This is not automatic, since rephasing $\sigma$ multiplies both channels by the same factor and cannot remove a relative phase between them.

Fermion attachment fixes $\sigma$, as for the vortex $3$ of $SO(3)_3$, so here it moves no anyon at all. It has module type $R_\psi$ (Proposition~\ref{prop:alpha}(2)) and acts on every zero mode by $\gamma\mapsto-\gamma$, as the fermion parity of the Majorana modes. So it multiplies the channel $\psi$, which carries odd fermion parity, by $-1$ relative to the channel $\one$. No rephasing removes this sign, because rephasing $\sigma$ multiplies both channels by the same phase, and $\psi$ is not rephased.

\subsection{Comparison}\label{subsec:physics-comparison}

\begin{table}[!ht]
\caption{Fermion-parity gauging and fermion attachment in the three
examples.}\label{tab:examples}
\begin{tabular}{@{}llll@{}}
\toprule
 & Laughlin, $\nu=1/k$ & $SO(3)_3$ & Ising\\
\midrule
fermionic phase & $U(1)_k$ & $SO(3)_3$ & $p+ip$, $SO(N)_1$\\
gauged theory & $U(1)_{4k}$ & $SU(2)_6$ & $\mathrm{Spin}(N)_1$\\
gauging of $(-1)^F$ & $A\mapsto A+c$ & sum over $s$ & sum over $s$\\
vortices & odd $n$ & $1,3,5$ & $\sigma$\\
fermion attachment & $n\mapsto n+2k$ & $1\leftrightarrow5$ & no anyon moves\\
channels of a vortex pair & $1$ & up to $4$ & $2$\\
\botrule
\end{tabular}
\end{table}

We summarize the three examples in Table~\ref{tab:examples}. In the
Laughlin state, fermion parity lies in the electromagnetic $U(1)$ and is
gauged in the Lagrangian, while in $SO(3)_3$ and in the Ising theory it is
gauged by the sum over spin structures. Fermion attachment moves every
vortex of the Laughlin state, some vortices of $SO(3)_3$, and no anyon of
the Ising theory, where it acts only on fusion spaces. Correspondingly, the phases compared by relative rigidity belong to different pairs of vortices in the Laughlin state, to different channels of one pair in the Ising theory, and to both in $SO(3)_3$. In all three cases the lifting ambiguity is the same $\Z/2$.

\backmatter

\section*{Acknowledgements}
R.L. is supported by National Natural Science
Foundation of China under Grant No.~12422503. J.T. is supported by National Natural Science Foundation of China under Grant No.~12405085 and by the Natural Science Foundation of Shanghai (Grant No.~24ZR1419300). JT would like to thank Ying Zhang for her love and support.

\section*{Declarations}

\begin{itemize}
\item \textbf{Competing interests.} The authors declare no competing interests.
\item \textbf{Data availability.} The authors declare that the data supporting the findings of this study are available within the paper.
\item \textbf{Code availability.} Not applicable.
\end{itemize}

\begin{appendices}

\section{An explicit trivialization of \texorpdfstring{$\alpha^2$}{alpha squared}}\label{app:square}

The isomorphism of Corollary~\ref{cor:alpha-square} can be written
explicitly for the representative~\eqref{eq:J-alpha}. This is the
tensor-categorical form of \cite[\S III.A, Eq.~(115)]{Aasen:2021vva},
where the corresponding phases are $\gamma_{a_x}=i^x$.
For homogeneous $X$, define
\[
\beta_X:=
\begin{cases}
\id_X,&p(X)=0,\\[2pt]
r_X\circ(\id_X\otimes e)\circ a_{X,\psi,\psi},&p(X)=1.
\end{cases}
\]
Thus $\beta_X\colon\alpha^2(X)\to X$, and for odd $X$ its source
is $(X\otimes\psi)\otimes\psi$. These maps include all unitors
and associators needed for the two attached fermions to annihilate.

\begin{lemma}\label{lem:square-sign}
For homogeneous $X,Y$ of parities $p,q$,
\[
\beta_{X\otimes Y}\circ J^{\alpha^2}_{X,Y}
=(-1)^{pq}(\beta_X\otimes\beta_Y).
\]
\end{lemma}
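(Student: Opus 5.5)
We would prove the identity by a case split on the parities $(p,q)$, using the composite formula~\eqref{eq:composite}, $J^{\alpha^2}_{X,Y}=\alpha(J^\alpha_{X,Y})\circ J^\alpha_{\alpha(X),\alpha(Y)}$, and the explicit description of $J^\alpha$ through $K$ in~\eqref{eq:K} and~\eqref{eq:J-alpha}. Because $\alpha$ preserves parity, $\alpha(X)$ and $\alpha(Y)$ again have parities $p$ and $q$, so the same branch of~\eqref{eq:K} applies at both stages. Throughout we work in the string-diagram shorthand of Convention~\ref{conv:coherence}, suppressing canonical isomorphisms.

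\emph{Even--even.} For $p=q=0$ we have $J^{\alpha^2}_{X,Y}=\id$ by~\eqref{eq:attachment-mixed}, and all $\beta$'s are identities, so there is nothing to check.

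\emph{Mixed.} For $p=0$ and $q=1$, we have $J^\alpha_{b,Y}=a^{-1}_{b,Y,\psi}$, so $J^{\alpha^2}_{b,Y}$ is the canonical reassociation $b\otimes((Y\psi)\psi)\to((bY)\psi)\psi$. The required identity is then exactly the module-naturality of $\beta$ already established in the proof of Corollary~\ref{cor:alpha-square} via the pentagon diagram. For $p=1$ and $q=0$, $J^\alpha_{X,b}$ braids $\psi$ past $b$ once at each stage, producing $c_{\psi,b}$ on each of the two attached fermions, i.e.\ $c_{\psi\psi,b}$ by the hexagon. Composing with $e$ and using naturality of the braiding at $e\colon\psi\psi\to\one$ together with $c_{\one,b}=r_b^{-1}l_b$, this reduces to canonical isomorphisms. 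In both mixed cases the sign is $+1$, as required.

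\emph{Odd--odd.} This is the case we expect to be the main obstacle, since it is the only one producing a sign. Write $X\psi\psi Y\psi\psi$ for the source. The inner stage $J^\alpha_{X\psi,Y\psi}$ braids the outer attached $\psi$ of $X$ (call it $\psi_2$) past $Y\psi_3$ and multiplies it with the outer $\psi_4$ of $Y$ by $e$. The outer stage $\alpha(J^\alpha_{X,Y})$ then braids the inner $\psi_1$ past $Y$ and applies $e$ to $\psi_1\psi_3$. The target side is $\beta_{X\otimes Y}=\id$ applied to this result.

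On the other side, $\beta_X\otimes\beta_Y$ contracts $\psi_1\psi_2$ and $\psi_3\psi_4$. To compare the two, we would first use naturality of the braiding to move $\psi_2$ past $Y$ and then past $\psi_3$. The step past $\psi_3$ costs $c_{\psi,\psi}=-\id$ relative to the ordering needed for the contractions. After that move the pairing is $e(\psi_2\psi_4)\,e(\psi_1\psi_3)$, which we would rewrite as $e(\psi_1\psi_2)\,e(\psi_3\psi_4)$ using the associativity~\eqref{eq:mu-assoc} and the commutativity~\eqref{eq:mu-comm} of $\mu$ from Lemma~\ref{lem:mu}. The remaining braidings of $\psi_1\psi_2$ past $Y$ combine by the hexagon into $c_{\psi\psi,Y}$. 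By naturality at $e$ this becomes $c_{\one,Y}$, which is canonical.

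The delicate point is the bookkeeping: we must ensure that exactly one net factor $c_{\psi,\psi}$ survives, and not zero or two of them. To control this we would draw the full string diagram, check that the four fermion strands have exactly one crossing among themselves, and confirm that every other crossing is of a $\psi$ past $Y$, which cancels via $c_{\psi\psi,Y}$ and naturality. That single surviving crossing gives $(-1)^{pq}=-1$.
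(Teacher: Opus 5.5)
Your proposal is correct and follows essentially the paper's argument: expand $J^{\alpha^2}$ by \eqref{eq:composite}, merge the two crossings past $Y$ by the hexagon, absorb them by naturality of the braiding at the contraction, and extract the single sign from the crossing of $\psi_2$ past $\psi_3$. The paper treats all parities at once via $\mu_{p,q}$, whereas you split into cases and reuse the module-naturality diagram of Corollary~\ref{cor:alpha-square} for $(p,q)=(0,1)$; in the odd--odd case, count the sign only once: after the interchange law moves the crossing of $\psi_1$ past $Y$ forward, the four-fermion part is $(e\otimes e)\circ(\id_\psi\otimes c_{\psi,\psi}\otimes\id_\psi)=-(e\otimes e)$, which already contracts $\psi_1\psi_2$ and $\psi_3\psi_4$, so a further appeal to \eqref{eq:mu-comm} would only re-derive the same sign.
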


\begin{proof}
Label the first and second attached powers in $\alpha^2(X)$ by
$\psi^p_{(1)}$ and $\psi^p_{(2)}$, and similarly for $Y$.
For an even object these are formal unit factors, inserted and
removed by the $r$-maps in~\eqref{eq:J-alpha}. By
\eqref{eq:composite},
\[
J^{\alpha^2}_{X,Y}
=\alpha(J^\alpha_{X,Y})\circ J^\alpha_{\alpha(X),\alpha(Y)}.
\]
Expanding~\eqref{eq:J-alpha}, the second attachment moves
$\psi^p_{(2)}$ past $Y$ and $\psi^q_{(1)}$, while the first
attachment moves $\psi^p_{(1)}$ past $Y$. The hexagon axioms
combine the two crossings past $Y$ into the crossing of
$\psi^p\otimes\psi^p$ past $Y$. This crossing disappears upon
contraction by $\mu_{p,p}$: the precise identity is
\[
r_Y\circ(\id_Y\otimes\mu_{p,p})
\circ c_{\psi^p\otimes\psi^p,Y}
=l_Y\circ(\mu_{p,p}\otimes\id_Y).
\]
It follows from naturality of the braiding and
$c_{\one,Y}=r_Y^{-1}l_Y$.

The remaining difference from separate contraction on $X$ and
$Y$ is the crossing of $\psi^p_{(2)}$ past
$\psi^q_{(1)}$. By supercommutativity~\eqref{eq:mu-comm} and associativity~\eqref{eq:mu-assoc} of $\mu$, the resulting four-power contraction equals $(-1)^{pq}$ times separate contraction of the
two $p$-powers and the two $q$-powers. By the pentagon and triangle axioms, the intervening reassociations agree with those in $\beta_X\otimes\beta_Y$. Hence the stated equality of morphisms from $\alpha^2(X)\otimes\alpha^2(Y)$ to $X\otimes Y$ holds.
\end{proof}

\begin{proposition}\label{prop:square-explicit}
The maps $\zeta_X:=i^{p(X)}\beta_X$ form a monoidal natural
isomorphism $\zeta\colon\alpha^2\Rightarrow\Id_\cE$ with
$\zeta_b=\id_b$ for all $b\in\cC$.
\end{proposition}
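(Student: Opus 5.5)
The proof reduces everything to Lemma~\ref{lem:square-sign} together with a check of the unit condition.

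First, naturality. Each $\beta_X$ is built from $r$, $a$ and $e$, all natural, and $\alpha^2$ acts on morphisms of $\cV$ by $f\mapsto(f\otimes\id_\psi)\otimes\id_\psi$. So $\beta$ is natural on homogeneous morphisms, and hence natural after additive extension (Convention~\ref{conv:graded}). Multiplying $\beta_X$ by the scalar $i^{p(X)}$, which is constant on each graded component, preserves naturality. Since morphisms between $\cC$ and $\cV$ vanish, no morphism mixes the two scalars. Each $\zeta_X$ is an isomorphism because $e$, $a$ and $r$ are. For $b\in\cC$ we have $\zeta_b=\beta_b=\id_b$, since $\alpha^2|_\cC=\Id_\cC$ by Proposition~\ref{prop:alpha}(1) and \eqref{eq:composite}.

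Next, the unit condition. By \eqref{eq:composite}, $\iota^{\alpha^2}=\alpha(\iota^\alpha)\iota^\alpha=\id_\one$, so the condition reduces to $\zeta_\one=\id_\one$, which holds because $\one\in\cC$.

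Finally, the tensor condition in \eqref{eq:monoidal-nat}. Since $\Id_\cE$ has trivial tensor structure, we must show
\[
\zeta_{X\otimes Y}\circ J^{\alpha^2}_{X,Y}=\zeta_X\otimes\zeta_Y .
\]
It suffices to check this for homogeneous $X,Y$ of parities $p,q$. Then $X\otimes Y$ is homogeneous of parity $p+q$ by Lemma~\ref{lem:grading}. By Lemma~\ref{lem:square-sign}, the left side equals
\[
i^{[p+q]}(-1)^{pq}\,(\beta_X\otimes\beta_Y),
\]
where $[p+q]\in\{0,1\}$ denotes the parity of $X\otimes Y$ modulo $2$. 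The right side equals $i^{p+q}(\beta_X\otimes\beta_Y)$, computed with integer exponents. It remains to verify the scalar identity
\[
i^{[p+q]}(-1)^{pq}=i^{p+q}\qquad\text{for }p,q\in\{0,1\}.
\]
If $p$ or $q$ is zero, both sides equal $i^{p+q}$. If $p=q=1$, the left side is $i^0\cdot(-1)=-1$ and the right side is $i^2=-1$.

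The main obstacle is the sign bookkeeping inside Lemma~\ref{lem:square-sign}, which we assume. Given that lemma, the only point needing care is that the parity $p(X\otimes Y)$ is reduced mod $2$ while $i^{p}\cdot i^{q}$ is not. The factor $i$ is exactly the square root of the residual scalar $\lambda=-1$ of Lemma~\ref{lem:rigidity}.
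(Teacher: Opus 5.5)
Your proposal is correct and follows essentially the same route as the paper. Naturality and $\zeta_b=\id_b$ come directly from the definition of $\beta$, and the unit condition follows from $\iota^{\alpha^2}=\id_\one$. The tensor condition is reduced via Lemma~\ref{lem:square-sign} to the scalar identity $i^{p\oplus q}(-1)^{pq}=i^pi^q$, checked case by case.
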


\begin{proof}
Naturality of $\beta$ follows from naturality of $a,r$, and
functoriality of $\otimes$. On $\cC$ it is the identity.
By Lemma~\ref{lem:square-sign}, the tensor condition for
$\zeta$ reduces to
\[
i^{p\oplus q}(-1)^{pq}=i^p i^q,
\]
where $p\oplus q$ denotes addition modulo $2$. This is immediate
in all four cases, and for $p=q=1$ it reads $-1=i^2$.
The unit condition holds because $\iota^{\alpha^2}=\id_\one$
and $\zeta_\one=\id_\one$. In particular
$\zeta_\psi=\id_\psi$, since the local fermion belongs to the
even component.
\end{proof}

\end{appendices}

\bibliography{manuscript}

\end{document}